\definecolor{citegreen}{rgb}{0,0.6,0}
\definecolor{refred}{rgb}{0.8,0,0}
\mathchardef\emptyset="001F
\theoremstyle{plain}
\newtheorem{theorem}{Theorem}[section]
\newtheorem{lemma}[theorem]{Lemma}
\newtheorem{proposition}[theorem]{Proposition}
\newtheorem{corollary}[theorem]{Corollary}
\theoremstyle{definition}
\newtheorem{remark}{Remark}[section]
\newtheorem{definition}{Definition}[section]
\newcommand{\R}{\mathbb{R}}
\newcommand{\N}{\mathbb{N}}
\newcommand{\SSS}{\mathbb{S}}
\newcommand{\mres}{\mathbin{\vrule height 1.6ex depth 0pt width
0.13ex\vrule height 0.13ex depth 0pt width 1.3ex}}
\DeclareFontFamily{U}  {MnSymbolC}{}
\DeclareSymbolFont{MnSyC}         {U}  {MnSymbolC}{m}{n}
\DeclareFontShape{U}{MnSymbolC}{m}{n}{
    <-6>  MnSymbolC5
   <6-7>  MnSymbolC6
   <7-8>  MnSymbolC7
   <8-9>  MnSymbolC8
   <9-10> MnSymbolC9
  <10-12> MnSymbolC10
  <12->   MnSymbolC12}{}
\DeclareFontShape{U}{MnSymbolC}{b}{n}{
    <-6>  MnSymbolC-Bold5
   <6-7>  MnSymbolC-Bold6
   <7-8>  MnSymbolC-Bold7
   <8-9>  MnSymbolC-Bold8
   <9-10> MnSymbolC-Bold9
  <10-12> MnSymbolC-Bold10
  <12->   MnSymbolC-Bold12}{}
\DeclareSymbolFont{MnSyC} {U} {MnSymbolC}{m}{n}
\DeclareMathSymbol{\utimes}{\mathrel}{MnSyC}{36}
\tikzset{every picture/.style={remember picture}}
\newtcbox{\mymath}[1][]{nobeforeafter,math upper,tcbox raise base,enhanced,colframe=black,colback=white,boxrule=1pt,#1}
\begin{document}

\title{\textbf{\large{A GEOMETRIC CAPACITARY INEQUALITY FOR SUB-STATIC MANIFOLDS WITH HARMONIC POTENTIALS}}}
\date{}
%\author[V.~Agostiniani]{Virginia Agostiniani}
%\address{V.~Agostiniani, Universit\`a degli Studi di Trento,
%via Sommarive 14, 38123 Povo (TN), Italy}
%\email{virginia.agostiniani@unitn.it}
%
%\author[F.~Oronzio]{Francesca Oronzio}
%\address{...}
%\email{...}
%
%\author[L.~Mazzieri]{Lorenzo Mazzieri}
%\address{L.~Mazzieri, Universit\`a degli Studi di Trento,
%via Sommarive 14, 38123 Povo (TN), Italy}
%\email{lorenzo.mazzieri@unitn.it}

\author{Virginia Agostiniani\\
\small{\em{Universit\`a degli Studi di Trento,
via Sommarive 14, 38123 Povo (TN), Italy}}\\
\small{\em{virginia.agostiniani@unitn.it}}
\and
Lorenzo Mazzieri\\
\small{\em{Universit\`a degli Studi di Trento,
via Sommarive 14, 38123 Povo (TN), Italy}}\\
\small{\em{lorenzo.mazzieri@unitn.it}}
\and
 Francesca Oronzio\\
\small{\em{Universit\`a di degli Studi di Napoli ``Federico II”, via Cintia, Monte S. Angelo 80126 Napoli (NA), Italy}}\\
\small{\em{francesca.oronzio@unina.it}}}

%\author{\small{VIRGINIA AGOSTINIANI, LORENZO MAZZIERI AND FRANCESCA ORONZIO}}
\maketitle

\begin{quote}
{\small ABSTRACT: In this paper, we prove that associated with a sub-static asymptotically flat manifold endowed with a harmonic potential there is a one-parameter family $\{F_{\beta}\}$ of functions which are monotone along the level-set flow of the potential. Such monotonicity holds up to the optimal threshold $\beta=\frac{n-2}{n-1}$ and allows us to prove a
geometric capacitary inequality where the capacity of the horizon plays
the same role as the ADM mass in the celebrated Riemannian Penrose Inequality.}
\end{quote}

\noindent
MSC (2020): 31C12, 53C24, 53C21, 83C57, 35N25, 53Z05.\\
\underline{Keywords}: sub--static metrics, splitting theorem, Schwarzschild solution, overdetermined boundary value problems.

%%%%%%%%%%%%%%%%%%%%%%%%%%%%%%%%%%%%%%%%%%%%%
%%%%%%%%%%%%%%%%%%%%%%%%%%%%%%%%%%%%%%%%%%%%%
%%%%%%%%%%%%%%%%%%%%%%%%%%%%%%%%%%%%%%%%%%%%%
%%%%%%%%%%%%%%%%%%%%%%%%%%%%%%%%%%%%%%%%%%%%%

\section{Introduction}
In this paper, the object under investigation is 
a triple $(M,g_{0},u)$ satisfying
the following two conditions:
\begin{enumerate}[ label=(\alph*)]
\item $(M,g_{0})$ is a smooth, connected, noncompact, complete, asymptotically flat, $n$-dimensional Riemannian manifold, with $n\geq 3$, with one end, and with nonempty smooth compact boundary $\partial M$, which is a priori allowed to have several connected components. \\
\item $u\in C^{\infty}(M)$ satisfies the system

\begin{equation}\label{f0}
\begin{cases}
u \mathrm{Ric}_{g_{0}}-\mathrm{D}_{g_{0}}^{2}u \geq 0 &\mathrm{in} \ M,\\
\Delta_{g_{0}} u=0 \  &\mathrm{in} \  M,\\
u=0 &\mathrm{on} \  \partial M,\\
u \to 1 &\mathrm{at} \  \infty,
\end{cases}
\end{equation}
where $\mathrm{Ric}_{g_{0}}$, $\mathrm{D}_{g_{0}}$ and $\Delta_{g_{0}}$ are the Ricci tensor, the Levi--Civita connection, and the Laplace operator of the metric $g_{0}$, respectively.
\end{enumerate}

\noindent 
If the equality holds in the first equation of \eqref{f0},
the triple $(M,g_0,u)$ is said \emph{static}.  
For clarity, we recall the definition to which we refer for asymptotically flat manifolds.
\begin{definition}\label{asymptoticallyflatmanifold}
A smooth, connected, noncompact, $n$-dimensional
Riemannian manifold (with or without compact boundary) $(N,h)$, with $n\geq 3$, is said to be {\em asymptotically flat} if there exists a compact subset $K\subset N$ such that $N \setminus K$ is a finite disjoint union of {\em ends} $N_{k}$,
each of which is diffeomorphic to $\R^{n}$ minus a closed ball by a coordinate chart $\psi_{k}$, through which, if $\widetilde{h}:=(\psi_{k})_{*}h=\widetilde{h}_{ij}dx^{i}\otimes dx^{j}$, we have
\begin{align}
\widetilde{h}_{ij}&= \delta_{ij} + O(\vert x \vert ^{-p})\,,\,\label{eq5}\\
\partial_{r}\,\widetilde{h}_{ij}&= O(\vert x \vert ^{-(p+1)}\,)\,,\,\label{eq6}\\
\partial_{r}\partial_{s}\,\widetilde{h}_{ij} & = O (\vert x \vert ^{-(p+2)}\,)\,,\,\label{eq7} \\
\mathrm{R}_{\widetilde{h}}&\in L^{1}(\mu_{\widetilde{h}})\,,\,\label{eq8}
\end{align}
for some $p>(n-2)/2$. Here, $\delta$ is the Kronecker delta, and the coordinate charts $\psi_{k}$ are called {\em charts at infinity}.
\end{definition}

\noindent Throughout the paper, we will refer to a triple $(M,g_{0},u)$ that satisfies conditions 
$(\mathrm{a})$ and $(\mathrm{b})$ as to a {\em sub-static harmonic triple}.
%\begin{definition}
%A triple $(M,g_{0},u)$ that satisfies conditions $(\mathrm{a})$ and $(\mathrm{b})$ is called {\em sub-static harmonic}.
%\end{definition}
A fundamental sub-static harmonic triple is the so called {\em Schwarzschild solution}, which is given by
\begin{equation}\label{solschw}
M=[(2m)^{ \frac{1}{n-2}},+\infty)\times \SSS^{n-1}\,,\quad\,\,\,g_{0}=\frac{dr \otimes dr}{1-2mr^{2-n}}+r^{2}g_{\SSS^{n-1}}\,, \quad \,\,\, u=\sqrt{1-2m r^{2-n}}\,. 
\end{equation}
It is well--known that both the metric $g_{0}$ and the potential $u$, which a priori are well defined only in $\mathring{M}$, extend smoothly up to the boundary and $(M,g_{0})$ is called {\em (spatial) Schwarzschild manifold}.
The parameter $m > 0$ is the ADM mass $m_{\mathrm{ADM}}$ of the Schwarzschild manifold. We refer the reader to Section~\ref{sec_uniqueness} for the definition
of the $m_{\mathrm{ADM}}$ associated with a general asymptotically flat manifold.
Here, we limit ourselves to recall that the decay conditions~\eqref{eq5}--~\eqref{eq8} guarantee that $m_{\mathrm{ADM}}$ is a geometric invariant (\cite{Bartnik}, \cite{Chrusciel}).\phantom{~\eqref{eq6},~\eqref{eq7}}

\smallskip
\noindent
Associated with a sub-static harmonic triple, specifically
with the potential $u$ ranging in $[0,1)$,
let us consider the following family of functions
depending on the parameter $\beta\geq0$:
\[
[0,1)\ni\ t\longmapsto V_\beta(t)\,:=\,(1-t^2)^{-\beta(\frac{n-1}{n-2})}\!\!\!
\int\limits_{\{u=t\}}\!\vert\mathrm{D}u \vert^{\beta+1}\,d\sigma.
\]
In~\cite{Virginia1} it was proven that if $(M,g_0,u)$ is a static triple, then,
for every $\beta\geq2$, the function
$V_\beta$ is strictly nonincreasig unless $(M,g_0,u)$ is the Schwarzschild solution.
The main purpose of this paper is to extend this result to the sub-static case 
and to the optimal threshold $\beta=\frac{n-2}{n-1}$. 
This is the content of Theorem \ref{Monotonicity--Partial Rigidity Theorem},
where the monotonicity of the above family - equipped with a corresponding rigidity 
statement - is expressed in terms of the functions $F_\beta(\tau)$, where $\tau=\frac{1+t^2}{1-t^2}\geq1$,
to be consistent with~\cite{MON} and in light of the more advanced analysis
contained therein.
This generalisation suggests that our approach is robust enough and likely to be exported
to other contexts. 
In a similar way, S. Brendle shows in~\cite{ALEX} how some structure conditions for the metric
are sufficient to prove an Alexandrov-type theorem and how such structure
generalises to the sub-static case.

\smallskip
\noindent
Let us now be slightly more detailed on how our Theorem~\ref{Monotonicity--Partial Rigidity Theorem} 
is proved. We adopt the main strategy proposed in~\cite{Virginia1},
which essentially consists in obtaining the monotonicity as a consequence
of a fundamental integral identity 
derived in a suitable conformally-related setting
(see~Proposition \ref{itegralidenty}). 
A delicate point is justifying such identity in a region
where critical points of the potential are present. 
One of the main differences with~\cite{Virginia1} is that, whereas in the 
static case the analyticity of the potential 
guaranteed the local finiteness
of the singular values, which made the argument simpler
in many occurrences, in the present sub-static setting the metric and 
in turn the potential are not a priori analytic.
Nevertheless, standard measure properties of 
the critical set of harmonic functions (summarised in Theorem~\ref{geometryoflevelset})
are enough to obtain the fundamental integral 
identity,
which in turn implies the monotonicity of $F_\beta$ and, coupled with Sard's Theorem, 
also its differentiability.
%It is worth noticing that in the passage from harmonic to $p$-harmonic functions
%the monotonicity is likely to survive (as it happens in the Euclidean case~\cite{MINK}), 
%whereas the persistence of differentiability properties is less probable.

\noindent
Observe that the difficulty in treating the critical points
under the threshold $\beta=1$ can be read off directly 
from formul\ae~\eqref{derivata_formale} and~\eqref{H}, the first one
displaying the derivative
of $F_\beta$ and the second one expressing
the mean curvature on a equipotential set
in terms of the Hessian of the potential itself.
In fact, calling $\Phi_\beta$ the conformal version of $F_\beta$
and looking at formula~\eqref{Phi'} containing the equivalent characterisation 
of $\Phi_\beta'$ derived from the integral identity~\eqref{partialfirstintegralidentities2},
one realises that problems arise already when $\beta<2.$

\noindent
Let us stress that the monotonicity is
obtained from the nonnegativity
of the right-hand side of our
fundamental integral identity. It is above the threshold $\beta=\frac{n-2}{n-1}$
that this is guaranteed, thanks to the Refined Kato Inequality
for harmonic functions. The optimality of such inequality
reflects a corresponding optimality of $\beta=\frac{n-2}{n-1}$ in our
result.
Moreover, let us remark that the (nonnegative) right-hand side of
\eqref{partialfirstintegralidentities2} is obtained
as the divergence a suitable modification
of a specific vector filed with nonnegative divergence (see \eqref{divY}), in the limit of a vanishing neighbourhood of the critical set.
The crucial point in the construction is to maintain the divergence
of the modified vector field  nonnegative.
It would be interesting to see whether a similar construction
can be performed for other families of metrics, including
special solutions as rigid case.
%Whereas a similar construction seems to be out of reach for an
%asymptotically flat sub-static manifold
%endowed with a general sub-harmonic potential, 
%some families, comprising e.g. the
%Reissner-Nordstr\"om manifold, are likely to be treated with our approach 
%and so object of future investigation.

\smallskip
\noindent 
A straightforward application of the monotonicity of $F_\beta$ is comparing
$F_\beta(1)$ with $F_\beta(+\infty)$, in turn yielding a ``capacitary version''
of the Riemannian-Penrose inequality
(Theorem~\ref{capacitaryriemannianpenroseinequality} below). 
The capacity comes naturally into play when computing 
$F_\beta(1)$ and $F_\beta(+\infty)$, the latter value via the asymptotic 
expansions of the metric and of the potential.
We recall that the capacity 
$\mathrm{Cap}(\partial M,g_{0})$ of $\partial M$ is defined as
\begin{align}\label{capacity}
\mathrm{Cap}(\partial M,g_{0}):=\frac{1}{(n-2)\vert \SSS^{n-1}\vert}\inf 
\Bigg\{ \int\limits_{M}\vert  \mathrm{D}_{g_{0}} v \vert _{g_{0}}^{2}\,d\mu_{g_{0}}:\, v\in \mathrm{Lip}_{loc}(M), \, v=0\,\, \text{on}\,\, \partial M,\, v\to 1\,\text{at}\, \infty\Bigg\}\,.
\end{align}
Throughout the paper, we will use the short--hand notation
$\mathcal C$ for the capacity. Comparing~\eqref{solschw} 
with either~\eqref{eq9} or~\eqref{eq10}, 
it is straightforward, in the case of
the Schwarzschild solution, that $m_{\mathrm{ADM}}=\mathcal C$.
For a general sub-static harmonic triple, the following inequality holds.

\begin{theorem}[Capacitary Riemannian Penrose Inequality]\label{capacitaryriemannianpenroseinequality}
Let $(M,g_{0},u)$ be a sub-static harmonic triple with associated capacity $\mathcal{C}$ and suppose that $\partial M$ is connected. Then 
\begin{align}\label{eq15}
\mathcal{C}\geq \frac{1}{2}\Bigg(\frac{\vert \partial M\vert\,\,}{\,\,\vert \SSS^{n-1}\vert}\Bigg)^{\!\!\frac{n-2}{n-1}}\,.
\end{align}
Moreover, the equality in~\eqref{eq15} holds if and only if $(M,g_{0})$ is isometric to the Schwarzschild manifold with $m_{\mathrm{ADM}}=\mathcal{C}$.
\end{theorem}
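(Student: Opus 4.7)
\smallskip\noindent
The plan is to apply the monotonicity of $F_\beta$ at the optimal threshold $\beta=\frac{n-2}{n-1}$ and compare the two endpoint values $F_\beta(1)$ and $F_\beta(+\infty)$, that is, $V_\beta(0)$ and $\lim_{t\to 1^-}V_\beta(t)$. After an explicit computation of both sides, the inequality $V_\beta(0)\geq V_\beta(+\infty)$ will collapse to precisely~\eqref{eq15}. For the rigidity statement, equality at a single point propagates via the monotonicity to force $V_\beta$ to be constant on $[0,1)$, and the rigidity clause of Theorem~\ref{Monotonicity--Partial Rigidity Theorem} then identifies $(M,g_{0},u)$ with Schwarzschild.

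\smallskip\noindent
To evaluate the boundary term $V_\beta(0)=\int_{\partial M}|\mathrm{D}u|^{\beta+1}\,d\sigma$, I would first establish that $|\mathrm{D}u|$ is constant along $\partial M$. Evaluating the sub-static condition at points of the horizon $\{u=0\}$ gives $\mathrm{D}_{g_{0}}^{2}u\leq 0$; taking the trace and invoking harmonicity $\Delta_{g_{0}}u=0$ forces $\mathrm{D}_{g_{0}}^{2}u\equiv 0$ on $\partial M$. Since $\mathrm{D}|\mathrm{D}u|^{2}=2\,\mathrm{D}^{2}u(\cdot,\mathrm{D}u)$, it follows that $|\mathrm{D}u|$ is locally constant along $\partial M$, hence \emph{globally} constant by the connectedness assumption, say $|\mathrm{D}u|\equiv c_{0}$ (with $c_{0}>0$ by the Hopf boundary point lemma). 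The value of $c_{0}$ is pinned down by the flux identity
\[
c_{0}\,|\partial M|\,=\,\int_{\partial M}|\mathrm{D}u|\,d\sigma\,=\,(n-2)\,\mathcal{C}\,|\SSS^{n-1}|,
\]
obtained by integrating $\Delta_{g_{0}}u=0$ between $\partial M$ and a large coordinate sphere and computing the flux at infinity via the asymptotic expansion of $u$. Consequently $V_\beta(0)=((n-2)\mathcal{C}|\SSS^{n-1}|)^{\beta+1}/|\partial M|^{\beta}$.

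\smallskip\noindent
For $V_\beta(+\infty)$, I would use the expansion $u=1-\mathcal{C}|x|^{2-n}+o(|x|^{2-n})$ coming from~\eqref{eq5}--\eqref{eq8} together with elliptic regularity for the harmonic function $u$: this yields $|\mathrm{D}u|\sim(n-2)\mathcal{C}|x|^{1-n}$ and identifies $\{u=t\}$ asymptotically with the coordinate sphere of radius $r_{t}=(2\mathcal{C}/(1-t^{2}))^{1/(n-2)}$. A direct substitution at $\beta=\frac{n-2}{n-1}$, where the exponent $\beta(n-1)/(n-2)$ collapses to $1$, gives
\[
V_\beta(+\infty)\,=\,\frac{|\SSS^{n-1}|\,(n-2)^{\beta+1}\,\mathcal{C}^{(n-2)/(n-1)}}{2}.
\]
Substituting both expressions into $V_\beta(0)\geq V_\beta(+\infty)$, the powers of $(n-2)$ and $|\SSS^{n-1}|$ rearrange and the exponents of $\mathcal{C}$ and $|\partial M|$ telescope so that the inequality reduces to precisely $2\mathcal{C}\geq(|\partial M|/|\SSS^{n-1}|)^{(n-2)/(n-1)}$, which is~\eqref{eq15}. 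Matching expansions in the rigid case then gives $m_{\mathrm{ADM}}=\mathcal{C}$.

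\smallskip\noindent
The main technical obstacle I expect is the careful asymptotic analysis required to compute $V_\beta(+\infty)$ on level sets escaping to infinity, using only the polynomial decay of Definition~\ref{asymptoticallyflatmanifold} (and not the analyticity available in the static case), which forces one to control the error terms uniformly along the expanding $\{u=t\}$. A secondary subtlety is the validity of the monotonicity at the endpoint $\tau=1$ itself, where one must pass to the limit through level sets possibly clustered with critical points of $u$; this is handled by the measure-theoretic control of Theorem~\ref{geometryoflevelset} combined with Sard's theorem, exactly as emphasised in the introduction.
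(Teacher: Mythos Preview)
Your argument for the inequality is essentially the paper's: compute the two endpoints of the monotone family and compare. The paper does this for an arbitrary $\beta>\frac{n-2}{n-1}$ rather than at the threshold, but the algebra collapses to the same inequality~\eqref{eq15} either way, and your evaluations of $V_\beta(0)$ and $V_\beta(+\infty)$ are correct.

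The rigidity step, however, has a genuine gap. You invoke the rigidity clause of Theorem~\ref{Monotonicity--Partial Rigidity Theorem} at $\beta=\frac{n-2}{n-1}$, but that clause is stated and proved only for $\beta>\frac{n-2}{n-1}$. This restriction is not cosmetic: in the conformal picture, vanishing of $\Phi_\beta'$ forces the integrand
\[
\Big(\beta-\tfrac{n-2}{n-1}\Big)\big|\nabla|\nabla\varphi|_g\big|_g^2+\Big[|\nabla^2\varphi|_g^2-\tfrac{n}{n-1}\big|\nabla|\nabla\varphi|_g\big|_g^2\Big]+Q(\nabla\varphi,\nabla\varphi)
\]
to vanish. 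When $\beta>\frac{n-2}{n-1}$ the first term kills $\nabla|\nabla\varphi|_g$, whence $\nabla^2\varphi\equiv0$ and the splitting follows. At the threshold the first term disappears and you are left only with equality in the refined Kato inequality together with $Q(\nabla\varphi,\nabla\varphi)=0$; this does \emph{not} by itself yield $\nabla^2\varphi\equiv0$, so the rigidity conclusion is unjustified.

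The remedy is exactly what the paper does: observe that for \emph{every} $\beta>\frac{n-2}{n-1}$ the comparison $F_\beta(1)\geq\lim_{\tau\to\infty}F_\beta(\tau)$ reduces to the same scalar inequality~\eqref{eq15}. Hence equality in~\eqref{eq15} forces $F_\beta$ to be constant for all $\beta>\frac{n-2}{n-1}$, and you may then invoke the rigidity at any such $\beta$. A second, smaller point: Theorem~\ref{Monotonicity--Partial Rigidity Theorem} only gives \emph{outer} rigidity on $\{u\geq t_0\}$ with $t_0>0$; to reach the boundary the paper passes through the conformal global statement ($\Phi_\beta$ constant $\Rightarrow$ $(M,g)$ splits as a product over $\partial M$), rather than trying to send $t_0\to0$ in the outer statement.
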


\noindent Whereas the above inequality has been obtained as a consequence of the
monotonicity of $F_\beta$, at every fixed $\beta\geq\frac{n-2}{n-1}$,
we remark that one could possible push the above described analysis 
one step forward, at the same time exploiting the full power of the optimality threshold. Indeed,
we believe that considering $p$-harmonic functions defined at the exterior 
of a bounded domain $\Omega$ lying in $M$, 
%\textcolor{violet}{manifold in this case without boundary satisfying the other conditions in (a)},
it may be possible to derive, 
as done in~\cite{MINK} for the Euclidean case and in the simultaneous limit as 
$\beta\downarrow\frac{n-2}{n-1}$ and $p\downarrow1$, 
a Minkowski-like inequality for $\partial\Omega$ 
(see~\cite{McC} for a Minkowski-like inequality in the static, 
asymptotically flat case).
%It is worth noticing that in the passage from harmonic to $p$-harmonic functions
%the monotonicity is likely to survive, as it happens in~\cite{MINK}, 
%whereas the persistence of differentiability properties is less probable.

\noindent
Concerning the treatment of general sub-static metrics and the derivation of
related geometric inequalities, besides the already cited~\cite{ALEX} 
we also would like to mention
~\cite{LI-XIA}, where an integral formula is obtained and applied to prove 
Hentze-Karcher-type inequalities.
For the case of asymptotically hyperbolic sub-static manifolds 
(specifically, for adS-Reissner-Nordstr\"om manifolds), we refer the interested reader to 
~\cite{WANG} and~\cite{GIR-ROD}.

\smallskip
\noindent We remark that our results are not based on the Positive Mass Theorem.
By contrast, we observe that using this celebrated result, more precisely a consequence
of it contained in~\cite[Theorem 1.5]{hirsch}, one can prove the following uniqueness statement. We refer the reader to Definition~\ref{asymptoticallyflatmanifold}
for the notation and terminology.

\begin{theorem}[Uniqueness Theorem for sub-static harmonic triples]
\label{uniq}
Let $(M,g_{0},u)$ be a sub-static harmonic triple with associated capacity $\mathcal{C}$. Suppose that there is a chart at infinity such that 
\begin{align}\label{eq58}
\mathrm{R}_{\widetilde g_0}=O(\vert x\vert ^{-q})\,,
\end{align}
for some $q>n$. Then $(M,g_{0})$ is the Schwarzschild manifold with associated ADM mass given by $ \mathcal{C}$.
\end{theorem}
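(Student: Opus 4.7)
The plan is to use the rigidity result of~\cite[Theorem~1.5]{hirsch}, which is a consequence of the Positive Mass Theorem. That statement asserts that, for an asymptotically flat manifold $(M,g_0)$ with $\mathrm{R}_{g_0}\geq 0$ and compact connected boundary $\partial M$, equipped with a harmonic potential $u$ vanishing on $\partial M$ and tending to $1$ at infinity, one has $m_{\mathrm{ADM}}\geq \mathcal{C}$, with equality realised only by the Schwarzschild manifold of mass $\mathcal{C}$. The first step is therefore to check the hypotheses of this theorem: tracing the first inequality in~\eqref{f0} and using $\Delta_{g_0}u=0$ gives $u\,\mathrm{R}_{g_0}\geq 0$, and since $u>0$ in $M\setminus\partial M$ we obtain $\mathrm{R}_{g_0}\geq 0$ there; the additional decay $\mathrm{R}_{\widetilde g_0}=O(|x|^{-q})$, $q>n$, provides the asymptotic integrability required to apply~\cite[Theorem~1.5]{hirsch}.

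It then remains to force the equality $m_{\mathrm{ADM}}=\mathcal{C}$. To this end, I would work with the expansions
\[
u \;=\; 1-\frac{\mathcal{C}}{|x|^{n-2}}+o\!\left(|x|^{-(n-2)}\right),\qquad \widetilde g_{0,ij}\;=\;\delta_{ij}+\frac{2\,m_{\mathrm{ADM}}}{(n-2)\,|x|^{n-2}}\,\delta_{ij}+o\!\left(|x|^{-(n-2)}\right),
\]
in a chart at infinity (the trace-free leading contribution of the metric can be absorbed via a Bartnik-type change of coordinates, using the improved scalar-curvature decay). Computing Ricci and Hessian at the order $|x|^{-n}$ then yields
\[
u\,\mathrm{Ric}_{g_0}-\mathrm{D}^2_{g_0}u \;=\; (n-2)\,(m_{\mathrm{ADM}}-\mathcal{C})\,T_{ij}\;+\;o\!\left(|x|^{-n}\right),\qquad T_{ij}:=\frac{\delta_{ij}}{|x|^n}-\frac{n\,x_ix_j}{|x|^{n+2}}.
\]
The symmetric tensor $T_{ij}$ is sign-indefinite: its $(n-1)$-fold positive eigenvalue $|x|^{-n}$ is realised on the tangent spaces to the coordinate spheres $\{|x|=r\}$, while its single negative eigenvalue equals $-(n-1)|x|^{-n}$ in the radial direction. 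Hence the pointwise matrix inequality in~\eqref{f0} can hold at large $|x|$ only if the coefficient $(n-2)(m_{\mathrm{ADM}}-\mathcal{C})$ vanishes; any nonzero value would be detected along either a tangential or the radial direction.

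Combining the two steps with the rigidity clause of~\cite[Theorem~1.5]{hirsch} identifies $(M,g_0)$ with the Schwarzschild manifold of mass $\mathcal{C}$, completing the proof. The main technical obstacle is making the asymptotic computation rigorous: one has to justify that the Bartnik-type normalisation at infinity is available under the standing hypotheses, and that the $o$-terms in the expansions of $u$ and $g_0$ survive two differentiations with enough decay to extract the precise $|x|^{-n}$ coefficient of $u\,\mathrm{Ric}_{g_0}-\mathrm{D}^2_{g_0}u$ unambiguously. This relies on the enhanced decay $\mathrm{R}_{\widetilde g_0}=O(|x|^{-q})$, $q>n$, together with elliptic regularity for $\Delta_{g_0}u=0$ on a nearly-flat background.
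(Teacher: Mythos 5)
Your first step coincides with the paper's: both apply the boundary Positive Mass Theorem of~\cite[Theorem~1.5]{hirsch} (after noting that $\mathrm{R}_{g_0}\geq 0$ follows from tracing the sub-static tensor, and that $\partial M$ is totally geodesic) to conclude $m_{\mathrm{ADM}}\geq \mathcal C$ together with Schwarzschild rigidity at equality. The divergence is in how $m_{\mathrm{ADM}}\leq\mathcal C$ is established, and there your argument has a genuine gap.

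Your route requires the pointwise expansion
\begin{equation}
\widetilde g_{0,ij}\;=\;\Big(1+\tfrac{2\,m_{\mathrm{ADM}}}{(n-2)\,|x|^{n-2}}\Big)\delta_{ij}\;+\;o_2\!\left(|x|^{-(n-2)}\right)
\end{equation}
in some chart at infinity, so that the $|x|^{-n}$ coefficient of $u\,\mathrm{Ric}_{g_0}-\mathrm D^2_{g_0}u$ can be isolated and tested against the sign-indefinite tensor $T_{ij}$. This is not available under the standing hypotheses. Definition~\ref{asymptoticallyflatmanifold} only asks for a decay rate $p>(n-2)/2$, which is typically strictly smaller than $n-2$; in that range $\widetilde g_0-\delta$ is an arbitrary $O(|x|^{-p})$ tensor with no isolated $|x|^{-(n-2)}$ leading term, let alone a conformally flat one. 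Moreover, the extra hypothesis $\mathrm{R}_{\widetilde g_0}=O(|x|^{-q})$ with $q>n$ constrains only the \emph{scalar} curvature; it does not improve the decay of the full Ricci tensor, and the Bartnik-type normalisation that you invoke to produce a conformally flat leading term (in harmonic coordinates) requires fast decay of the whole Ricci tensor, not just its trace. Without that, the coefficient $(n-2)(m_{\mathrm{ADM}}-\mathcal C)T_{ij}$ you wish to detect is swamped by uncontrolled terms of the same or larger order, and the indefiniteness argument cannot be run.

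The paper avoids this entirely by staying at the integral level. It uses the Miao--Tam characterisation of the mass, $m_{\mathrm{ADM}}=\lim_{r\to\infty}m_I(r)$ with $m_I(r)$ given by a flux of $\mathrm{Ric}_{\widetilde g_0}-\tfrac12\mathrm{R}_{\widetilde g_0}\widetilde g_0$ through $\partial B_r$ (formula~\eqref{ADMmasswithricc}), and then adds and subtracts $\mathrm D^2_{\widetilde g_0}\widetilde u/\widetilde u$ inside the integrand. The sub-static tensor then contributes a term of definite sign at each finite radius (no leading-order expansion is needed), the Hessian term converges to $(n-1)(n-2)\,\mathcal C\,|\mathbb S^{n-1}|$ via the potential asymptotics~\eqref{eq13}, the normal-vector and area corrections vanish for any $p>(n-2)/2$, and the decay $q>n$ is used precisely to kill the residual scalar-curvature flux. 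This gives $m_{\mathrm{ADM}}\leq\mathcal C$ under exactly the stated hypotheses. To salvage your pointwise approach, you would need to assume something like $p>n-2$, or $\mathrm{Ric}_{\widetilde g_0}=O(|x|^{-q})$ with $q>n$, together with a harmonic-coordinate normalisation; that is a strictly stronger hypothesis set than the theorem requires.
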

\noindent
It remains an open question to see whenever it is possible to remove the assumption on the decay of $\mathrm{R}_{\widetilde{g}_{0}}$ and get the same conclusion.

\bigskip
\noindent The paper is organised as follows. 
In Section~\ref{prel}, we recall and discuss some preparatory material, namely the asymptotic 
expansions of the metric and of the potential, and classical measure properties of the critical set of the potential, with a close look on related integral quantities. 
In Section~\ref{MonOutRigTheor}, we prove the Monotonicity and Outer Rigidity Theorem~\ref{Monotonicity--Partial Rigidity Theorem}, and the consequent Capacitary Riemannian Penrose Inequality contained in Theorem~\ref{capacitaryriemannianpenroseinequality}. To do this, we use from
Section~\ref{conformalsetting} some corresponding results obtained in a suitable conformally-related
setting. The biggest technical effort is contained in such section.
In the Appendix we also provide an alternative proof of the monotonicity of
our monotone quantities.
Finally, Section~\ref{sec_uniqueness} is devoted to the proof of Theorem~\ref{uniq}.

%%%%%%%%%%%%%%%%%%%%%%%%%%%%%%%%%%%
%%%%%%%%%%%%%%%%%%%%%%%%%%%%%%%%%%%
%%%%%%%%%%%%%%%%%%%%%%%%%%%%%%%%%%%
%%%%%%%%%%%%%%%%%%%%%%%%%%%%%%%%%%%

\section{Preliminaries}\label{prel}
\noindent
Let $(M,g_{0},u)$ be a sub-static harmonic triple.
We observe, as a first consequence of system~\eqref{f0}, that the scalar curvature $\mathrm{R}_{g_{0}}$ is nonnegative.
Since $u$ satisfies the last three conditions of system~\eqref{f0}, by the Maximum Principle we have 
$$\mathring{M}=M\setminus \partial M=\{0<u<1\}\,.$$
Also, by the forth condition in~\eqref{f0}, each level set of $u$ is compact.
Moreover, from the Hopf Lemma, it follows that 
$\vert \mathrm{D}_{g_{0}}u \vert _{g_{0}}>0$ on $\partial M$. In particular, zero is a regular value of $u$.
Furthermore, from the first two conditions in~\eqref{f0} restricted to $\partial M$ it is easy to deduce that $\mathrm{D}_{g_{0}}^{2}u\equiv 0$ on $\partial M$. In turn, the function $\vert \mathrm{D}_{g_{0}}u \vert_{g_{0}}$ attains a positive constant value on each connected component of $\partial M$, and the boundary $\partial M$ is a totally geodesic hypersurface in $M$.
\medskip

\noindent
We now deal with the asymptotic behaviour of the potential $u$ at $\infty$.
By Theorem~\ref{asympespanprel} below, this is given by:
\begin{equation}\label{eq9}
u=1-\frac{\mathcal{C}}{\vert x\vert ^{n-2}}+o_{2}(\vert x\vert^{2-n})\ \ \,\text{as} \ \ \vert x\vert\to +\infty\,,
\end{equation}
being
\begin{equation}\label{eq10}
\mathcal{C}=\frac{1}{(n-2)\vert \SSS^{n-1}\vert}\int\limits_{\partial M}\vert \mathrm{D}_{g_{0}} u \vert _{g_{0}}d\sigma_{g_{0}}\,.
\end{equation}
Here, $\sigma_{g_{0}}$ is the canonical measure on the boundary $\partial M$ seen as a Riemannian submanifold of $(M,g_{0})$, and we have used the standard notation $o_{2}$, which means that, in any chart at infinity $\psi$, denoted by $\widetilde{u}$ the function $u\circ \psi^{-1}$, the following conditions hold true.
\begin{align}
\widetilde{u}&=1-\mathcal{C}\vert x\vert ^{2-n}\,+\,o(\vert x\vert^{2-n})\,, \label{eq11}\\
\partial_{i}\widetilde{u}&=(n-2)\,\mathcal{C}\,\vert x\vert^{-n}\,x^{i}\,+\,o(\vert x\vert^{1-n})\,,\label{eq12}\\
\partial_{i}\partial_{j}\widetilde{u}&=-(n-2)\,\mathcal{C}\,\vert x\vert^{-n-2}(n\,x^{i}\,x^{j}-\vert x\vert^{2}\delta_{ij})\,+\,o(\vert x\vert^{-n})\,.\label{eq13}
\end{align}
Let us remark that we can always suppose, without loss of generality, 
that the considered chart at infinity admits a diffeomorphic extension 
to the closure of the coordinate domain. We will make this implicit assumption
throughout the paper, so that $\partial K$ 
(see Definition \ref{asymptoticallyflatmanifold}) is a connected hypersurface of $M$ and the quantities related to the metric can be pushed--forward in $\R^{n}$ outside an open ball and be smooth here.
We also observe that formula \eqref{eq10} is nothing but an equivalent
characterisation of the capacity of $\partial M$.

\subsection{Asymptotic expansions}\label{appA}
Let $(N,h)$ be a smooth, connected, noncompact, complete, asymptotically flat, $n$-dimensional Riemannian manifold, with $n\geq 3$, with one end and with nonempty smooth compact boundary $\partial N$.
We adopt the following notation. 
\begin{itemize}
\item $B$ and $B_{R}$ a generic open ball and the open ball of radius $R>0$ centred in the origin of 
$(\R^{n},d_{e})$, respectively;
\item $\vert \cdot \vert$ the euclidean norm of $\R^{n}$;
\item $\vert \SSS^{n-1}\vert$ the hypersurface area of the unit sphere inside $\R^{n}$ with the canonical metric;
\item $\mathrm{D}_{e}$ and $\Delta_{e}$ the Levi--Civita connection and the Laplace operator of $ (\R^{n}, g_{\R^{n}})$, respectively;
\item $\mathrm{D}_{h}$ and $\Delta_{h}$ the Levi--Civita connection and the Laplace operator of $(N,h)$, respectively;
\item $\sigma_{e}$ the canonical measure on a Riemannian submanifold of $(\R^{n},g_{\R^{n}})$;
\item $\sigma_{h}$ the canonical measure on a  Riemannian submanifold of $(N,h)$;
\item $\vert \cdot \vert_{e}$ the norm induced by $g_{\R^{n}}$ on the tangent spaces to the manifold $\R^{n}$;
\item $\vert \cdot \vert_{h}$ the norm induced by $h$ on the tangent spaces to the manifold $N$.
\item If $\psi$ is a chart at infinity of $(N,h)$ according to Definition~\ref{asymptoticallyflatmanifold}, we denote by $\widetilde{h}$ the push--forward metric $\psi_{*}{h}$ of $h$ by $\psi$, having coordinate expression $\widetilde{h}_{ij}(x)\,dx^{i}\otimes dx^{j}$. In this context, $\mathrm{D}_{\widetilde{h}}$ and $\Delta_{\widetilde{h}}$ denote the Levi--Civita connection and the Laplace operator of $\widetilde{h}$, respectively, while $\sigma_{\widetilde{h}}$ is the canonical measure on a Riemannian submanifold of $(\R^{n}\setminus B,\widetilde{h})$ 
and $\vert \cdot \vert_{\widetilde{h}}$ is the norm induced by $\widetilde{h}$ on the tangent spaces. Moreover, $\mathrm{Ric}_{\widetilde{h}}$ and $\mathrm{R}_{\widetilde{h}}$ are the Ricci tensor and the scalar curvature of $\widetilde{h}$, respectively.
\end{itemize}

\begin{proposition}
Let $\psi$ be a chart at infinity of $N$ (according to Definition~\ref{asymptoticallyflatmanifold}). The decays
\begin{align}
\widetilde{h}^{ij}-\delta^{ij}&=O_{2}(\vert x \vert ^{-p})\,,\,\label{eq37app}\\
(\mathrm{Rm}_{\widetilde{h}})_{ijk}^{l}&=O(\vert x \vert ^{-(p+2)})\,,\,\label{eq38app}\\
(\mathrm{Ric}_{\widetilde{h}})_{ij}&=O(\vert x \vert ^{-(p+2)})\,,\,\label{eq39app}\\
\mathrm{R}_{\widetilde{h}}&=O(\vert x \vert ^{-(p+2)})\,,\,\label{eq40app}
\end{align}
hold true for some $p>\frac{n-2}{2}$. Moreover, 
\begin{align}
\vert \nu_{\,\widetilde{h}}^{i}-\nu_{e}^{i}\vert&=O(\vert x \vert ^{-p})\,\label{eq42app}\,,\\
d\sigma_{\widetilde{h}}&=(1+O(\vert x \vert ^{-p})\,d\sigma_{e}\,\label{eq41app}\,,
\end{align}
where $\nu_{e}$ is the $\infty$--pointing unit normal with respect to the Euclidean metric and $\sigma_{e}$ the associated canonical measure on $\partial B_{R}\,$, while $\nu_{\,\widetilde{h}}$ is the $\infty$--pointing unit normal with respect to $\widetilde{h}$ and $\sigma_{\widetilde{h}}$ the associated canonical measure on $\partial B_{R}\,$.
\end{proposition}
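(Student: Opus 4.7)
The plan is to derive all six decays directly from the defining conditions \eqref{eq5}--\eqref{eq7} of asymptotic flatness, proceeding from the inverse metric up through curvature and then to the geometric quantities attached to $\partial B_R$. Throughout, the key algebraic fact is that any product of two terms that individually decay like $|x|^{-(p+1)}$ gives something of order $|x|^{-(2p+2)}$, which, since $p>0$, decays strictly faster than $|x|^{-(p+2)}$. Thus at each order only the ``linear in derivatives of $\widetilde{h}$'' contributions set the rate.

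\emph{Inverse metric \eqref{eq37app}.} For $|x|$ large enough, $\widetilde{h}_{ij}-\delta_{ij}$ is arbitrarily small in operator norm. I would write $\widetilde h = I + A$ with $A=O(|x|^{-p})$ and expand
\begin{equation*}
\widetilde{h}^{ij}-\delta^{ij} \,=\, -A^{ij} + (A^2)^{ij} - \cdots,
\end{equation*}
whose leading order is $-A^{ij}=O(|x|^{-p})$. Differentiating once and twice and applying Leibniz, each derivative distributes onto some factor of $A$, and since $\partial A = O(|x|^{-(p+1)})$, $\partial^2 A = O(|x|^{-(p+2)})$ while powers $A^k$ with $k\geq 2$ decay faster, I obtain $\partial\widetilde{h}^{ij}=O(|x|^{-(p+1)})$ and $\partial^2\widetilde h^{ij}=O(|x|^{-(p+2)})$, which is the $O_2$ content of \eqref{eq37app}.

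\emph{Curvature decays \eqref{eq38app}--\eqref{eq40app}.} From the decay of $\widetilde h^{ij}$ and of $\partial\widetilde h_{ij}$, the Christoffel symbols
\begin{equation*}
\Gamma^{k}_{ij}=\tfrac12\widetilde h^{kl}\bigl(\partial_i\widetilde h_{jl}+\partial_j\widetilde h_{il}-\partial_l\widetilde h_{ij}\bigr)
\end{equation*}
satisfy $\Gamma=O(|x|^{-(p+1)})$ and $\partial\Gamma=O(|x|^{-(p+2)})$. Inserting into $(\mathrm{Rm})^l_{ijk}=\partial_i\Gamma^{l}_{jk}-\partial_j\Gamma^{l}_{ik}+\Gamma\ast\Gamma$, the derivative terms give $O(|x|^{-(p+2)})$ while the quadratic term gives $O(|x|^{-(2p+2)})$; the first dominates because $p>0$. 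This yields \eqref{eq38app}, and contracting once with $\widetilde h^{ij}=\delta^{ij}+O(|x|^{-p})$ produces \eqref{eq39app}--\eqref{eq40app} with the same rate.

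\emph{Unit normal and area element \eqref{eq42app}--\eqref{eq41app}.} On $\partial B_R$ the outward Euclidean normal is $\nu_e^i=x^i/|x|$. The $\widetilde h$-gradient of the radial function $|x|$ is $\widetilde h^{ij}(x^j/|x|)=\nu_e^i+O(|x|^{-p})$, and its $\widetilde h$-norm is $1+O(|x|^{-p})$, so normalising gives \eqref{eq42app}. For the area element, restrict $\widetilde h$ to $T\partial B_R$ and write it as the Euclidean round metric plus an $O(R^{-p})$ tensor; the determinant, and hence its square root, equals $1+O(R^{-p})$, which is \eqref{eq41app}.

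The argument is essentially bookkeeping once one sets up the Neumann expansion for $\widetilde h^{ij}$. The only mildly delicate step is confirming that the $O_2$ decay of the inverse metric follows from the decay of two derivatives of $\widetilde h_{ij}$ (rather than just the metric itself); I expect this to be the main technical point, but it is handled transparently by differentiating the identity $\widetilde h^{ij}\widetilde h_{jk}=\delta^i_k$ twice and solving for $\partial^2\widetilde h^{ij}$ in terms of the already-controlled quantities $\widetilde h^{ij}$, $\partial\widetilde h^{ij}$, $\partial\widetilde h_{ij}$ and $\partial^2\widetilde h_{ij}$.
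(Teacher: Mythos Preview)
Your proposal is correct and follows essentially the same approach as the paper: the paper also derives \eqref{eq37app} by differentiating the identity $\widetilde h^{ik}\widetilde h_{kj}=\delta^i_j$ (the method you flag at the end, rather than your Neumann-series variant), obtains \eqref{eq38app}--\eqref{eq40app} from the coordinate formula for $\mathrm{Rm}$ in terms of $\Gamma$ and $\partial\Gamma$ exactly as you do, and handles \eqref{eq42app}--\eqref{eq41app} by the same normalisation and determinant arguments, though it carries out the area-element computation explicitly in stereographic coordinates on $\partial B_R$ rather than stating it abstractly.
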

\begin{proof}
From $\widetilde{h}^{ik} \widetilde{h}_{kj} =\delta^{i}_{j}$ it is easy to get
\begin{align}
\partial_{i}\widetilde{h}^{kl}&=-\widetilde{h}^{kr}\, \widetilde{h}^{ls}\,\partial_{i}\widetilde{h}_{rs}\\
\partial_{i}\partial_{j}\widetilde{h}^{kl}&=\widetilde{h}^{ka}\, \widetilde{h}^{rb}\, \widetilde{h}^{ls}\,(\partial_{i}\widetilde{h}_{ab})\,(\partial_{j}\widetilde{h}_{rs})+\widetilde{h}^{la}\, \widetilde{h}^{sb}\, \widetilde{h}^{kr}\,(\partial_{i}\widetilde{h}_{ab})\,(\partial_{j}\widetilde{h}_{rs})-\widetilde{h}^{kr}\, \widetilde{h}^{ls}\,\partial_{i}\partial_{j}\widetilde{h}_{rs}\,.
\end{align}
These formulae coupled with~\eqref{eq5},~\eqref{eq6} and~\eqref{eq7} give~\eqref{eq37app}.
Decay~\eqref{eq38app} is another direct consequence of Definition~\ref{asymptoticallyflatmanifold}, keeping in mind that
\begin{align}
(\mathrm{Rm}_{\widetilde{h}})_{ijk}^{l}&=\partial_{i}\Gamma_{jk}^{l}-\partial_{j}\Gamma_{ik}^{l}+\Gamma_{jk}^{s}\Gamma_{is}^{l}-\Gamma_{ik}^{s}\Gamma_{js}^{l}\,,\\
\Gamma_{ij}^{k}&=\frac{\widetilde{h}^{kl}}{2}[\partial_{i}\widetilde{h}_{lj}+\partial_{j}\widetilde{h}_{li}-\partial_{l}\widetilde{h}_{ij}]\,.\label{simbchristoffel}
\end{align}
Decays~\eqref{eq39app}--\eqref{eq40app} are obtained by contractions of the Riemannian tensor.
Now, observe that 
$$\nu_{\,\widetilde{h}}=\frac{\widetilde{h}^{ij}\,x^{i}\,\frac{\partial}{\partial x^{j}}}{\sqrt{\,\widetilde{h}^{lk}\,x^{l}\,x^{k}}}\,,$$
and that
\begin{align}
\vert \nu_{e}^{i}-\nu_{\,\widetilde{h}}^{i}\vert&=\Bigg\vert \,\frac{x^{i}}{\vert x\vert}-\frac{\widetilde{h}^{ij}\,x^{j}}{\sqrt{\,\widetilde{h}^{lk}x^{l}x^{k}} }\Bigg\vert =\Bigg\vert (\delta^{ij}-\widetilde{h}^{ij})\,\frac{x^{j}}{\sqrt{\,\widetilde{h}^{lk} x^{l}x^{k}}}\,+
 x^{i}\,\Bigg(\frac{1}{\vert x\vert} -\frac{1}{\sqrt{\,\widetilde{h}^{lk} x^{l}x^{k}}}\Bigg)\,\Bigg\vert\\
&\leq  C\sum\limits_{j}\vert   \delta^{ij}-\widetilde{h}^{ij}\vert\,+\,\frac{\Big\vert\, \big(\widetilde{h}^{lk}-\delta^{lk}\big) \,x^{l}x^{k}\,\Big\vert}{\sqrt{\,\widetilde{h}^{lk}x^{l}x^{k}}\Big(\sqrt{\,\widetilde{h}^{lk} x^{l}x^{k}}+\vert x\vert\Big)}\,\label{eq250} \,.
\end{align}
Observe also that 
\begin{equation}\label{eq25}
\widetilde{h} _{ij}(x)v^{i}v^{j}\geq C ^{-1} v^{i}v^{j}\delta _{ij}\,,
\end{equation}
for some $C>0$, for any $x\in \R^{n}\setminus B$. 
%$\vert x\vert>>1$. 
Since trivially $\vert x^{k} x^{l}\vert \leq \vert x\vert^{2}$, from~\eqref{eq250} and~\eqref{eq25}, coupled with~\eqref{eq37app}, we get decay~\eqref{eq42app}.
Concerning decay~\eqref{eq41app}, recall first that, using a coordinate chart  $(y^{1},\dots,y^{n-1})$ on $\partial B_{R}$, we have that $d\sigma_{\widetilde{h}}=\sqrt{\det\,\widetilde{h}^{\partial B_{R}}}\,dy^{1}\dots dy^{n-1}$ with 
$\widetilde{h}^{\partial B_{R}}=\widetilde{h}^{\partial B_{R}}_{\alpha\beta}\,dy^{\alpha}\otimes dy^{\beta}$, where $\widetilde{h}^{\partial B_{R}}_{\alpha\beta}=\widetilde{h}\big(\frac{\partial}{\partial y^{\alpha}},\frac{\partial}{\partial y^{\beta}}\big)$.
%and $dy^{1}\dots dy^{n-1}$ is the Lebesgue measure in the coordinate domain.
Now, using the specific local parametrization $x=x(y^{1},\dots,y^{n-1})$ of $\partial B_{R}$, given by the inverse of stereographic projection from its north pole with the diffeomorphism $p\in\SSS^{n-1}\to Rp\in\partial B_{R}$, we have that
\begin{align*}
\widetilde{h}\big(\frac{\partial}{\partial y^{\alpha}},\frac{\partial}{\partial y^{\beta}}\big)(x(y))&=\widetilde{h}_{ij}(x(y))\,\frac{\partial x^{i}}{\partial y^{\alpha}}(y)\,\frac{\partial x^{j}}{\partial y^{\beta}}(y)\\
&=\big(\widetilde{h}_{ij}(x(y))\pm \delta_{ij}\big)\,\frac{\partial x^{i}}{\partial y^{\alpha}}(y)\,\frac{\partial x^{j}}{\partial y^{\beta}}(y)=\frac{4 R^{2}}{(\vert y\vert^{2}+1)^{2}}\,\Big(\delta_{\alpha\beta}+O(R^{-p})\Big)\,,
\end{align*}
because
\begin{equation}
\Big\vert \,\big(\widetilde{h}_{ij}(x(y))-\delta_{ij}\big)\,\frac{\partial x^{i}}{\partial y^{\alpha}}(y)\,\frac{\partial x^{j}}{\partial y^{\beta}}(y)\,\Big\vert
\leq \sum\limits_{i,j}\big\vert\widetilde{h}_{ij}(x(y))-\delta_{ij}\big\vert \, \Big\vert\frac{\partial}{\partial y^{\alpha}}\Big\vert_{e}\,  \,\Big\vert\frac{\partial }{\partial y^{\beta}}\Big\vert_{e}= \frac{4 R^{2}}{(\vert y\vert^{2}+1)^{2}}\, O(R^{-p})\,.
\end{equation}
Hence, on $\partial B_{R}$,
$$d\sigma_{\widetilde{h}}=\Big(\frac{2 R}{\vert y\vert^{2}+1}\Big)^{n-1}\,\sqrt{\det\big(\delta_{\alpha\beta}+O(R^{-p})\big)}\,dy^{1}\dots dy^{n-1}=\big(1+O(R^{-p})\big)d\sigma_{e}\,,$$
where in the last identity we have used the Leibniz formula for the determinant and Taylor--expanded the square root.
\end{proof}

\noindent
The following result is well--known. For completeness, we provide the statement, along with its proof, which is an extension of~\cite[Lemma A.2.]{MMT} to every $n\geq3$.
\begin{theorem}\label{asympespanprel}
Let $(N,h)$ be a smooth, connected, noncompact, complete, asymptotically flat, $n$-dimensional Riemannian manifold, with $n\geq 3$, with one end, and with nonempty smooth compact boundary $\partial N$. If $v\in C^{\infty}(N)$ is the solution to
\begin{equation}\label{f1prel}
\begin{cases}
\Delta_{h} v=0 \  &\mathrm{in} \  N\,,\\
v=1 &\mathrm{on} \  \partial N\,,\\
v\to 0&\mathrm{at} \  \infty\,.
\end{cases}
\end{equation}
then
\begin{equation}\label{eq28prel}
v=\frac{\mathcal{C}}{\vert x\vert ^{n-2}}+o_{2}(\vert x\vert^{2-n})\ \ \text{as} \ \ \vert x\vert\to \infty\,, \quad \quad \text{with}\quad \quad \mathcal{C}=\frac{1}{(n-2)\vert \SSS^{n-1}\vert}\int\limits_{\partial N}\vert \,\mathrm{D}_{h} v\,\vert_{h}\, d\sigma_{h}\,.
\end{equation}
\end{theorem}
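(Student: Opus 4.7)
The plan is to first prove existence, uniqueness, and elementary bounds for $v$, then identify $\mathcal{C}$ via a flux computation, and finally establish the $o_2(|x|^{2-n})$ refinement through a barrier-and-bootstrap argument.

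\textbf{Existence and elementary bounds.} I would construct $v$ by exhausting $N$ with relatively compact domains $N_R := N \cap \psi^{-1}(\bar B_R)$, solving the Dirichlet problem $\Delta_h v_R = 0$ with $v_R = 1$ on $\partial N$ and $v_R = 0$ on $\psi^{-1}(\partial B_R)$, then passing to the limit using standard interior elliptic estimates on compact subsets of $N$. The strong maximum principle and the Hopf lemma give $0<v<1$ in $N\setminus\partial N$ and $|D_h v|_h>0$ on $\partial N$.

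\textbf{Identification of $\mathcal{C}$.} Apply the divergence theorem to the harmonic function $v$ on $N_R$, for $R$ so large that $\partial N \subset \psi^{-1}(B_R)$. Denoting by $\nu_h^\infty$ the $\infty$-pointing unit normal on $\psi^{-1}(\partial B_R)$, and using that $D_h v$ is parallel to the outward normal on $\partial N$ (since $v\equiv 1$ there attains its maximum), one gets
\[
\int_{\psi^{-1}(\partial B_R)} \langle D_h v, \nu_h^\infty\rangle_h\, d\sigma_h \,=\, -\int_{\partial N} |D_h v|_h\, d\sigma_h \qquad \text{for every large } R.
\]
The constancy of this flux in $R$ is precisely what defines the scalar $\mathcal{C}$ in the statement, and upon pushing forward to $\R^n\setminus B_R$ via~\eqref{eq42app}--\eqref{eq41app} one obtains the corresponding Euclidean flux identity up to $O(R^{-p})$ error.

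\textbf{Asymptotic expansion.} Working in the chart at infinity with push-forwards $\widetilde v,\widetilde h$, observe that $\Phi(x):=\mathcal{C}|x|^{2-n}$ is Euclidean-harmonic, so by \eqref{eq37app} and~\eqref{simbchristoffel},
\[
\Delta_{\widetilde h}\Phi \,=\, (\widetilde h^{ij}-\delta^{ij})\partial_i\partial_j\Phi - \widetilde h^{ij}\Gamma_{ij}^k\partial_k\Phi \,=\, O(|x|^{-n-p}),
\]
which is strictly better than $|x|^{-n}$. Barrier functions of the form $\pm A|x|^{2-n} \pm \varepsilon |x|^{2-n-\delta}$, for appropriate $\delta>0$ depending on $p$, combined with the maximum principle applied to $\widetilde v$ on $\R^n\setminus B_R$, first yield $\widetilde v = O(|x|^{2-n})$; the Euclidean Green-function representation for $\widetilde v-\Phi$ outside $B_R$, with the flux-constancy above guaranteeing that the monopole coefficient agrees with $\mathcal{C}$, then upgrades this to $\widetilde v - \Phi = o(|x|^{2-n})$. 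Finally, interior Schauder estimates on the rescaled annuli $B_{2R}\setminus B_R$ promote the $C^0$-decay to $C^2$-decay, delivering the desired $o_2(|x|^{2-n})$.

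\textbf{Main obstacle.} The delicate point is the sharp rate in low dimensions: in Definition~\ref{asymptoticallyflatmanifold}, $p$ may be only slightly larger than $(n-2)/2$, and for $n=3$ this means $p>1/2$, so a single comparison argument may not suffice to reach the $o(|x|^{2-n})$ threshold. I expect an iteration in which each step gains a decay factor of order $|x|^{-p}$, repeated finitely many times until the remainder is $o(|x|^{2-n})$, to be the technical heart of the extension of~\cite[Lemma A.2]{MMT} from dimension three to general $n\geq 3$.
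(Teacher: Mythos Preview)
Your outline follows essentially the same route as the paper---barrier to get $\widetilde v=O(|x|^{2-n})$, Schauder for derivative bounds, Green-function representation to extract the monopole and remainder, then flux to identify $\mathcal C$---with only the order of the last two steps swapped.

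One correction: your ``main obstacle'' is not an obstacle at all. A single pass through the Green-function argument already yields the remainder bound $z=O(|x|^{-(n-2+p)})$, which is $o(|x|^{2-n})$ for \emph{any} $p>0$; no iteration is needed, regardless of how close $p$ is to $(n-2)/2$. The paper makes this transparent by extending $\widetilde v$ smoothly to all of $\R^n$, writing $\Delta_e\widetilde v=f$ with $|f|\le C|x|^{-(n+p)}$, representing $\widetilde v$ as the Newtonian potential of $f$ (Liouville kills the bounded harmonic difference), and then splitting that potential into the monopole $\mathcal C|x|^{2-n}$ plus an explicit remainder controlled by $|x|^{-(n-2+p)}$. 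This global-extension trick is cleaner than working with the exterior Green's function and is the step you should make explicit.
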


\noindent
We remark that the asymptotic behaviour of the potential $u$ at $\infty$, given by formula~\eqref{eq9}, is a simply consequence of the above theorem observing that $u=1-v$ when $(N,h)=(M,g_{0})$.
\medskip

\begin{proof}
{\em \underline{Step $1$: Construction of a barrier function.}}
Let $\psi$ be a chart at infinity for $N$. From now on by $C$ we will denote some positive constant, which may change from line to line.
By Definition~\ref{asymptoticallyflatmanifold}, there exist $p>(n-2)/2$ and $R_{1}\geq1$ such that
\begin{equation}
\R^{n}\setminus {B}_{R_{1}}\subseteq\R^{n}\setminus \overline{B}
\end{equation}
\begin{equation}\label{eq26}
\vert \widetilde{h}_{ij}- \delta_{ij}\vert \leq C\vert x \vert ^{-p}\quad\quad\quad \vert \partial_{k} \widetilde{h}_{ij}\vert \ \leq  C \vert x \vert ^{-(p+1)}\quad\quad\quad \vert\partial_{k}\partial_{l} \widetilde{h}_{ij}\vert \  \leq C \vert x \vert ^{-(p+2)}
\end{equation}
for every $x\in  \R^{n}\setminus B_{R_{1}}$.
By~\eqref{eq37app}, the same conditions as in~\eqref{eq26} are satisfied by $\widetilde{h}^{-1}(x)$ for all $x\in  \R^{n}\setminus B_{R_{1}}$.
Then, for every $f\in C^{\infty}( \R^{n}\setminus B_{R_{1}})$, writing
\begin{align}
\Delta_ {\widetilde{h}}f
&=\delta^{ij}\partial_{i}\partial _{j}f+\sigma^{ij}\partial_{i}\partial _{j}f+b^{j} \partial_{j}f\,\,,\label{eq27}
\end{align}
where 
$$\sigma^{ij}:=\widetilde{h}^{ij}-\delta^{ij}\,, \quad \quad b^{j}:=-\widetilde{h}^{kl}\Gamma_{kl}^{j}=\frac{1}{2}\, \widetilde{h}^{kl}\,\widetilde{h}^{ij}\, \partial_{i}\widetilde{h}_{lk}- \, \widetilde{h}^{ki}\,\widetilde{h}^{lj}\,\widetilde{h}_{kl,i}\,,$$ 
we have that
\begin{equation}\label{decdiffeq}
\vert\sigma^{ij}\vert \leq C\vert x \vert ^{-p}\,,\quad \quad\quad \vert b^{j}\vert\leq C \vert x \vert ^{-(p+1)}\,,\quad \quad\quad \vert \partial_{l}b^{j}\vert\leq C \vert x \vert ^{-(p+2)}
\end{equation}
in $ \R^{n}\setminus B_{R_{1}}$. For a fixed $0<\varepsilon<p$ and for $a>0$ to be chosen later, consider the function
$$\phi_{a}=a\Bigg(\frac{1}{\vert x \vert^{n-2}}-\frac{1}{\vert x \vert^{n-2+\varepsilon}}\Bigg)\,.$$
By direct computation one can check that
\begin{align}
\partial_{j}\phi_{a}&=-a\Bigg(\frac{n-2}{\vert x \vert^{n}}-\frac{n-2+\varepsilon}{\vert x \vert^{n+\varepsilon}}\Bigg)x^{j}\\
\partial_{i}\partial_{j}\phi_{a}&=a\Bigg[\frac{n\,(n-2)}{\vert x \vert^{n+2}}-\frac{ (n+\varepsilon)(n-2+\varepsilon)}{\vert x \vert^{n+2+\varepsilon}}\Bigg]x^{i}\,x^{j}-a\Bigg[\frac{n-2}{\vert x \vert^{n}}-\frac{n-2+\varepsilon}{\vert x \vert^{n+\varepsilon}}\Bigg]\delta_{ij}\,,
\end{align} 
and in turn that
\begin{equation}
\vert\partial_{i}\phi_{a}\vert \leq a\, C\vert x \vert^{1-n}\quad\quad \text{and}\quad\quad \vert\partial_{i}\partial_{j}\phi_{a}\vert \leq a \,C\vert x \vert^{-n}\,.
\end{equation}
Therefore, by~\eqref{eq27} and~\eqref{decdiffeq}, we obtain that
\begin{align}
\Delta_{\widetilde{h}} \phi_{a}=a\Big[-(n-2+\varepsilon)\varepsilon\vert x \vert^{-(n+\varepsilon)}+O(\vert x \vert^{-(n+p)})\Big]\,,
\end{align}
 and hence there exists $R_{2}> R_{1}$ independent of $a$ such that $\Delta_{\widetilde{h}} \phi_{a}<0$ in $\R^{n}\setminus B_{R_{2}}$, for every $a>0$.
We now choose $a>0$ so that $\phi_{a}=1$ on $\partial B_{R_{2}}$, that is $a=\Big[\frac{1}{R_{2}^{n-2}}-\frac{1}{R_{2}^{n-2+\varepsilon}}\Big]^{-1}$.
Since $\phi_{a}$ is $\widetilde{h}$--superharmonic in $\R^{n}\setminus B_{R_{2}}$ and since $\widetilde{v}:=v\circ \psi^{-1}<1$  on $\partial B_{R_{2}}$,
by the Maximum Principle 
\begin{align}\label{eq66}
\widetilde{v}\leq \phi_{a}\quad\text{in}\quad \R^{n}\setminus B_{R_{2}}\,.
\end{align}

\noindent
{\em \underline{Step $2$: Asymptotic expansion of $v$.}}
Note that from~\eqref{eq66} one gets in particular that $\widetilde{v}\leq C\vert x \vert^{2-n}$.
We now apply Shauder's Interior estimates (\cite[Lemma 6.20]{trudinger}) to $\Delta_{\widetilde{h}}\widetilde{v}=0$ in $\R^{n}\setminus \overline{B}_{R_{2}}$, where the operator $\Delta_{\widetilde{h}}$ is defined as in~\eqref{eq27} and its coefficients satisfy the estimates in~\eqref{decdiffeq}. Recalling that the H\"{o}lder norms are weighted by the (Euclidean) distance $d_{e}(\,\cdot\,,\partial B_{R_{2}})$ from $\partial B_{R_{2}}$ and since $d_{e}(x,\partial B_{R_{2}})\simeq\vert x\vert$ when $\vert x\vert>>1$, from such estimates we get
\begin{align}\label{decsol}
\vert\partial_{i}\widetilde{v}(x)\vert\leq C \vert x\vert^{1-n}\quad\quad\quad \vert\partial_{i}\partial_{j}\widetilde{v}(x)\vert\leq C \vert x\vert^{-n}
\end{align}
in $\R^{n}\setminus B_{R_{2}}$ (up to a bigger $R_{2}$). Combining~\eqref{decdiffeq} and~\eqref{decsol}, the equation $\Delta_{\widetilde{h}}\widetilde{v}=0$ can be equivalent written as $\Delta_{e}\widetilde{v} = f\,$ where
\begin{align}\label{neweucleq}
\vert f(x)\vert &\leq C \vert x\vert^{-(n+p)}\,.
\end{align}
We consider a smooth extension of $\widetilde{v}$ on $\R^{n}$, still denoted by $\widetilde{v}$, and the smooth extension of $f$ given by $\Delta_{e}\widetilde{v}$, still denoted by $f$. 
By a classical representation formula and due to~\eqref{neweucleq}, the function 
$$w(x)=-\frac{1}{n (n-2) \omega_{n}}\int\limits_{\R^{n}}\frac{f(y)}{\vert x-y \vert^{n-2}}\,dy\,,$$
is well--defined and fulfils $\Delta_{e} w=f$ on $\R^{n}$.
Now, one can rewrite $w$ in $\R^{n}\setminus \{O\}$ as 
\begin{align}
w(x)&=-\frac{1}{n (n-2) \omega_{n}}\frac{1}{\vert x\vert^{n-2}}\int\limits_{\R^{n}}f(y) \,dy+\frac{1}{n (n-2) \omega_{n}}\frac{1}{\vert x\vert^{n-2}}\int\limits_{\R^{n}\setminus B_{\frac{\vert x \vert}{2}}(O)}f(y)\, dy\\
&\quad-\frac{1}{n (n-2)\omega_{n}}\int\limits_{B_{\frac{\vert x \vert}{2}}(x)}\frac{f(y)}{\vert x-y \vert^{n-2}}\,dy-\frac{1}{n (n-2)\omega_{n}}\int\limits_{\R^{n}\setminus \big(B_{\frac{\vert x \vert}{2}}(x)\cup B_{\frac{\vert x \vert}{2}}(O)\big) }\frac{f(y)}{\vert x-y \vert^{n-2}}\,dy\\
&\quad-\frac{1}{n (n-2)\omega_{n}}\int\limits_{B_{\frac{\vert x \vert}{2}}(O)}\Bigg[\frac{1}{\vert x-y \vert^{n-2}}-\frac{1}{\vert x\vert^{n-2}}\Bigg]\,f(y)\,dy\,,
\end{align}
and show that each summand can be bounded by $C\vert x\vert^{-(n-2+p)}$, 
except the first one.
Therefore, we have that
\begin{equation}
w(x)=-\frac{1}{n (n-2) \omega_{n}}\frac{1}{\vert x\vert^{n-2}}\int\limits_{\R^{n}}f(y) \,dy+z(x)\,,\quad\quad\vert z(x)\vert \leq C\vert x \vert^{-(n-2+p)}\,,
\end{equation}
in $\R^{n}\setminus \{O\}$. 
Since the function $\widetilde{v}-w$ is harmonic and bounded on $\R^{n}$, then it is constant and this constant is zero, using the fact that $\widetilde{v}-w \to 0$ for $\vert x\vert \to \infty$.
Hence 
\begin{equation}\label{eq31}
\widetilde{v}=\frac{\mathcal{C}}{\vert x\vert^{n-2}}+z(x)
\end{equation}
in $\R^{n}\setminus B_{R_{2}}$.
We observe that
\begin{align}
\Delta_{\widetilde{h}}z=\Delta_{\widetilde{h}}\Bigg(\widetilde{v}-\frac{\mathcal{C}}{\vert x\vert^{n-2}}\Bigg)&=-\mathcal{C}\Big(\sigma^{ij}\partial_{i}\partial_{j}\frac{1}{\vert x\vert^{n-2}}+b^{k} \partial_{k} \frac{1}{\vert x\vert^{n-2}} \Big)=:-\mathcal{C}\hat{z}\,,
\end{align}
and that
\begin{equation}
\vert \hat{z}(x)\vert \leq C \vert x\vert^{-(n+p)}\,,\quad \quad\quad \vert \partial_{k}\hat{z}(x)\vert \leq C \vert x\vert^{-(n+p+1)}\,.
\end{equation}
Therefore, applying Shauder's Interior estimates to $\Delta_{\widetilde{h}}z= -\mathcal{C}\hat{z}$ in $\R^{n}\setminus \overline{B}_{R_{2}}$, we get
\begin{equation}\label{eq32}
\vert z(x)\vert \leq C \vert x\vert^{-(n-2+p)}\quad\quad\quad \vert\partial_{i} z(x)\vert\leq C \vert x\vert^{-(n-1+p)}\quad\quad\quad \vert\partial_{i}\partial_{j}z(x)\vert\leq C \vert x\vert^{-(n+p)}
\end{equation}
in $\R^{n}\setminus B_{R_{2}}$ (up to a bigger $R_{2}$). From~\eqref{eq31} and~\eqref{eq32} we obtain in particular~\eqref{eq28prel}.
\bigskip

\noindent 
{\em \underline{Step $3$: Characterization of $\mathcal{C}$.}}
First of all we remark that $0< v< 1$ on $\mathring{N}$, $v:N\to(0,1]$ is proper, and, from the Hopf Lemma,
$\vert \mathrm{D}_{h}v \vert _{h}>0$ on $\partial N$. In particular, $1$ is a regular value of $v$.
Let $K$ be the compact set on the complement of which the chart $\psi$  is defined.
For every $R>R_{2}$, applying the Divergence Theorem to the function $v$ on $K\cup \{\vert \psi\vert< R\}$ we obtain that
\begin{align}
0=\int\limits_{K\cup \{\vert \psi\vert< R\}}\Delta_{h} v \,d\mu_{h}=\int\limits_{\partial N}h(\mathrm{D}_{h}v,\nu_{h})\,d\sigma_{h}+\int\limits_{\{\vert \psi\vert =R\}}h(\mathrm{D}_{h}v, \nu_{h})\, d\sigma_{h}\,,
\end{align}
where $\nu_{h}$ is the outward unit normal vector field with respect to $h$ along $\partial N$ and $\{\vert \psi\vert =R\}$.
Then, it follows that 
\begin{align}
\int\limits_{\partial N}\vert \mathrm{D}_{h}v\vert_{h}\, d\sigma_{h}=\int\limits_{\partial N}h(\mathrm{D}_{h}v,\nu_{h})\,d\sigma_{h}=-\int\limits_{\{\vert \psi\vert =R\}}h(\mathrm{D}_{h}v, \nu_{h})\, d\sigma_{h}=-\int\limits_{\partial B_{R}}\widetilde{h}\Big(\mathrm{D}_{\widetilde{h}}\widetilde{v},\nu_{\,\widetilde{h}}\Big)\,d\sigma_{\widetilde{h}}\,,
\end{align}
where $\widetilde{v}=v\circ \psi^{-1}$. Now, 
thanks to~\eqref{eq5},~\eqref{eq37app},~\eqref{eq42app} and
~\eqref{eq41app}, and also by identity~\eqref{eq31} and
the second in \eqref{eq32}, and keeping in mind that $\vert\partial_{i}\widetilde{v} \vert\leq C\vert x\vert^{1-n}$,
we have that
\begin{align}
\int\limits_{\partial B_{R}}\widetilde{h}\Big(\mathrm{D}_{\widetilde{h}}\widetilde{v},\nu_{\,\widetilde{h}}\Big)\,d\sigma_{\widetilde{h}}&=\int\limits_{\partial B_{R}}g_{\R^{n}}\Big(\mathrm{D}_{e}\widetilde{v},\nu_{e}\Big)\,d\sigma_{e}+O(R^{-p})\\
&=\int\limits_{\partial B_{R}}g_{\R^{n}}\Bigg(\mathrm{D}_{e}\Big(\mathcal{C}\vert x\vert^{2-n}+z\Big),\frac{x}{\vert x\vert}\Bigg)\,d\sigma_{e}+O(R^{-p})\\
&=\int\limits_{\partial B_{R}}g_{\R^{n}}\Bigg(\mathrm{D}_{e}\Big(\mathcal{C}\vert x\vert^{2-n}\Big),\frac{x}{\vert x\vert}\Bigg)\,d\sigma_{e}+O(R^{-p})\\
&=-\mathcal{C}(n-2)\vert\SSS^{n-1}\vert+O(R^{-p}).
\end{align}
Hence 
$$\int\limits_{\partial N}\vert \mathrm{D}_{h}v\vert_{h}\, d\sigma_{h}=-\lim_{R\to \infty}\int\limits_{\partial B_{R}}\widetilde{h}\Big(\mathrm{D}_{\widetilde{h}}\widetilde{v},\nu_{\,\widetilde{h}}\Big)\,d\sigma_{\widetilde{h}}=\mathcal{C}(n-2)\vert\SSS^{n-1}\vert\,.$$
\end{proof}

\subsection{Measure of and integration on the level sets of the potential}\label{measandintlevelset}

\noindent
Let $(N,h)$ and $\iota:S \hookrightarrow N$ be respectively a $m$--dimensional Riemannian manifold and a $s$--dimensional Riemannian submanifold of $N$.
Let $k$ be a positive real number. We set
\begin{enumerate}[label=]
\item $\mathcal{B}(S)$ the smallest $\sigma$--algebra containing all open sets of $S$;
\item $(S,\Lambda(S),\mu_{\iota^{*}h})$ the canonical space of measure on the Riemannian manifold $(S,\iota^{*}h)$ (see~\cite[Section~3.4]{grigor});
\item $\mathcal{H}^{k}_{S}$ the $k$-dimensional Hausdorff measure on $(S,d_{S})$, being $d_{S}$ the distance function of $S$;
\item $\mathcal{H}^{k}_{S;N}$ the $k$-dimensional Hausdorff measure on $(S,d_{S;N})$ where $d_{S;N}$ is the distance function of ${N}$ restricted to $S\times S$;
\item \( \mathcal{H}^{k}_{N} \mres S\) the $k$-dimensional Hausdorff measure of $N$ restricted to $S$.
\end{enumerate}
By definition of the Hausdorff measure and by~\cite[Proposition~12.7]{Taylor}, $\mathcal{H}^{k}_{S;N}$, \(\mathcal{H}^{k}_{N} \mres S$ and $\mathcal{H}^{k}_{S}$  coincide on $\mathcal{B}(S)$, and by~\cite[Proposition~12.6]{Taylor} and by~\cite[Proposition~2.17]{Rudin}, $\mathcal{H}^{s}_{S}$ and $\mu_{\iota^{*}h}$ coincide on $\Lambda(S)$.
The same results still hold when $N$ is a manifold with boundary.\\
\smallskip

\noindent
For the ease of the reader, we collect in the next theorem 
some results about the measure of the level sets of the potential 
$u$ and 
\[
\mathrm{Crit}(u):=\{\vert \mathrm{D}_{g_{0}} u\vert_{g_{0}}=0\},
\] 
which are well--known in the Euclidean setting
(see, e.g.,~\cite{Hardt2} and~\cite{Hardt1}).

\begin{theorem}\label{geometryoflevelset}
Let $(M,g_{0},u)$ be a sub-static harmonic triple. Then the following statements hold true.
\begin{enumerate}[ label=(\roman*)]
  \item For every $t\in[0,1)$, the level set $\{u=t\}$ is compact and has finite $(n-1)$--Hausdorff measure in $M$;
  \item $\mathrm{Crit}(u)$ is a compact subset of $M$ and its Hausdorff dimension in $M$ is less than or equal to $(n-2)$;
  \item The set of the critical values of $u$ has zero Lebesgue measure, and for every $t\in[0,1)$ regular value of $u$ there exists $\epsilon_{t}>0$ such that $(t-\epsilon_{t},t+\epsilon_{t})\cap [0,1)$ does not contain any critical value of $u$.
\end{enumerate}
\end{theorem}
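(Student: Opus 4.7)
The plan is to reduce each of the three claims to well-known statements about solutions of linear elliptic PDE in the Euclidean setting via local coordinate charts, together with the properness of $u$ (forced by $u\to 1$ at infinity) and the asymptotic expansions \eqref{eq11}--\eqref{eq13}.

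For (i), I would first observe that $u$ is proper onto $[0,1)$: given $t\in[0,1)$, choose $t<t'<1$; by \eqref{eq11} the sublevel set $\{u\le t'\}$ lies inside a compact set of $M$, and since it is also closed it is compact, whence $\{u=t\}\subset\{u\le t'\}$ is compact as well. To bound the $(n-1)$-Hausdorff measure, cover the compact set $\{u=t\}$ by finitely many coordinate charts; in each chart the equation $\Delta_{g_0}u=0$ reads as a linear uniformly elliptic PDE with smooth coefficients on an open subset of $\mathbb{R}^n$, and the classical Hardt--Simon/Lin theory (see \cite{Hardt1,Hardt2}) yields locally finite $(n-1)$-dimensional Hausdorff measure for the level sets of solutions of such equations; summing the local bounds over the finite cover gives the desired finiteness.

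For (ii), the expansion \eqref{eq12} shows that $|\mathrm{D}_{g_0}u|_{g_0}$ stays bounded away from zero outside a sufficiently large compact set, so $\mathrm{Crit}(u)$ is bounded in $M$; since it is also closed, as the zero set of the continuous function $|\mathrm{D}_{g_0}u|_{g_0}$, it is compact. For the Hausdorff dimension, I would again localise via a finite chart cover of $\mathrm{Crit}(u)$ and invoke the stratification/dimension estimate of Hardt--Simon type for the critical set of a solution to a smooth linear elliptic equation, which gives $\dim_{\mathcal{H}}\mathrm{Crit}(u)\le n-2$ locally, hence globally.

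For (iii), the Lebesgue-negligibility of the set of critical values is immediate from Sard's theorem since $u\in C^\infty(M)$ (alternatively, it follows from (ii) and the fact that $u$ is locally Lipschitz, so $u(\mathrm{Crit}(u))$ has Hausdorff dimension at most $n-2<1$). For the second assertion, $u(\mathrm{Crit}(u))$ is the continuous image of the compact set $\mathrm{Crit}(u)$, hence compact and in particular closed in $[0,1)$; its complement, the set of regular values, is therefore open in $[0,1)$, which yields the required neighbourhood $(t-\epsilon_t,t+\epsilon_t)\cap[0,1)$ around each regular value $t$. The main technical point is the justification of steps (i) and (ii): one has to check that the Euclidean Hardt--Simon theory, originally framed for harmonic functions or for solutions of constant-coefficient elliptic equations, applies to the pullback of $\Delta_{g_0}u=0$ through a coordinate chart; this is routine given the smoothness of $g_0$ but is the step most in need of explicit care.
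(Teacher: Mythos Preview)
Your proposal is correct and follows essentially the same route as the paper: localise via charts, invoke the Hardt--Simon and Hardt--Hoffmann-Ostenhof--Hoffmann-Ostenhof--Nadirashvili results for nodal and critical sets of elliptic solutions, and use properness for the compactness claims. Two small remarks: the paper makes explicit the prerequisite you leave implicit, namely that $\widetilde u - t$ has finite order of vanishing (via the strong unique continuation of Garofalo--Lin~\cite{Garofalo}), which is the hypothesis needed to invoke~\cite[Theorem~1.7]{Hardt1}; and your argument for the second half of~(iii), deducing openness of the regular values directly from the compactness of $u(\mathrm{Crit}(u))$, is cleaner than the paper's sequential contradiction argument.
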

\begin{proof}
Each level set of $u$ is compact, due to the forth condition in~\eqref{f0}. Now, consider the nontrivial case where 
$\mathrm{Crit}(u)\neq\emptyset$ and let $p$ be a point of a
critical level set $\{u=t\}$. Take a chart $(U_{p},\psi_{p})$ centred at the point $p$ with $\psi_{p}(U_{p})=B_{1}$. 
Setting $\widetilde{g}_{0}=(\psi_{p})_{*}g_{0}=\widetilde{g}_{0;ij}dx^{i}\otimes dx^{j}$, 
there exists $C>0$ such that
\begin{equation}\label{f3}
C ^{-1} v^{i}v^{j}\delta _{ij}\leq\widetilde{g}_{0;ij}(x)v^{i}v^{j}\leq C v^{i}v^{j}\delta _{ij}, 
\end{equation}
for each $(v^{1},\dots,v^{n})\in\R^{n}$ and for each $x\in \overline{B_{\frac{1}{2}}}\,$. 
The same 
condition is
satisfied by the coefficients $\widetilde{g}_{0}^{\,ij}$.
In particular, setting $\widetilde{u}=u\circ\psi_{p}^{-1}$, 
we have that
$$\{ \vert \mathrm{D}_{\widetilde{g}_{0}} \widetilde{u}\vert_{\widetilde{g}_{0}} =0\}\cap B_{\frac{1}{2}}=\{ \vert \mathrm{D}_{e}\widetilde{u}\vert_{e}=0\}\cap B_{\frac{1}{2}}\,.$$
We observe that $\Delta_{\widetilde{g}_{0}}v=a_{ij}\partial_{i}\partial_{j}\widetilde v+b_{i}\partial_{i}\widetilde v=0$  is an elliptic partial differential equation with coefficient $C^{\infty}$ in $\overline{ B_{\frac{1}{2}}}$. 
We recall that if $v$ is a $C^{\infty}$--solution of the above equation 
%$\Delta_{\widetilde{g}_{0}}v=0$ in $B_{\frac{1}{2}}$
and if $v$ vanishes to infinite order at a point $x_{0}\in B_{\frac{1}{2}} $, i.e. for every $k>0$
$$\lim_{r\to 0}\frac{1}{r^{k}}\int\limits_{B_{r}(x_{0})} v^{2}\,dx=0\,,$$ 
then $v$ is identically zero in $B_{\frac{1}{2}}$ (see~\cite[Theorem~ 1.2]{Garofalo}). 
Applying this fact to $\widetilde{u}-t$, one can argue that 
% is still $\widetilde{g}_{0}$-harmonic function, we observe that $\widetilde{u}-t$ has finite order of vanishing at $O=\psi_{p}(p)$. Indeed, otherwise, $\widetilde{u}\equiv t$ in $B_{\frac{1}{2}}$ and therefore $u\equiv t$ in the open set $\psi_{p}^{-1}(B_{\frac{1}{2}})$. Every point $q\neq p$ of $\mathring{M}$ is joined to $p$ by a continuous curve covered by a finite number open sets $U_{1},\dots U_{m-1}$, in which the previous conditions are satisfied, and by $U_{m}:=\psi_{p}^{-1}(B_{\frac{1}{2}})$. In particular, we have that if there exists a point of $U_{i}$ that admits an open neighbourhood in which the function $u$ is constant then the function $u$ is constant in $U_{i}$. With a standard inductive argument we could say that $u$ is constant along the curve and $u(q)=u(p)$. Therefore, $u\equiv t$ in $M$ by the arbitrariness of $q\in \mathring{M}$ and by the continuity of $u$.
%This is not possible. 
$\widetilde{u}-t$ has finite order of vanishing at $O$.
Then, by using ~\cite[Theorem~1.7]{Hardt1}, there exists $0<\rho <\frac{1}{2}$ such that
 $$ \mathcal{H}^{n-1}(B_{\rho}\cap \{\widetilde{u}=t\})< \infty\,.$$
%\textcolor{violet}{Note that $\widetilde{u}$ is not constant in $B_{\frac{1}{2}}$, otherwise $\widetilde{u}-C$ admit points of infinite order of vanishing and this is impossible by the previous argument. 
Since $\widetilde{u}$ is nonconstant in $B_{\frac{1}{2}}$ and by the structure and regularity of $\Delta_{\widetilde{g}_{0}}$,~\cite[Theorem~1.1]{Hardt2} yields
$$ \mathcal{H}^{n-2}(B_{\rho}\cap \{\vert \mathrm{D}_{e}\widetilde{u}\vert_{e}=0\})< \infty\,.$$
Hence, since the restriction of $\psi_{p}$ to $\psi_{p}^{-1}(B_{1/2})$ is bilipschitz due to~\eqref{f3} and since  
the measures $\mathcal{H}^{k}_{\psi_{p}^{-1}(B_{1/2})}$ and 
$ \mathcal{H}^{k}_{M} \mres \psi_{p}^{-1}(B_{1/2})$ coincide on borel sets,
statements $(i)$ and $(ii)$ are true locally. 
In turn, by compactness of $\mathrm{Crit}(u)$, they are true globally.
To prove $(iii)$, observe that by Sard's Theorem, the set of the critical values of $u$ has zero Lebesgue measure. Now, suppose by contradiction that there exists $\overline{t}\in[0,1)$ regular value such that, for all $m\geq \overline{m}$ with $\frac{1}{\overline{m}}<1-\overline{t}$, the interval $(\overline{t}-\frac{1}{m},\overline{t}+\frac{1}{m})\cap [0,1)$ contains critical values. Hence there is a sequence $\{t_{m}\}_{m\geq \overline{m}}$ of critical values such that $t_{m}\to\overline{t}$. In particular, there exists a sequence 
$\{p_{m}\}_{m\geq \overline{m}}$ of critical points contained in the set $\{0\leq u\leq \overline{t}+\frac{1}{m}\}$ and such that $u(p_{m})=t_{m}$. 
Then, by compactness and up to a subsequence,
$p_{m}\to p$. In turn,
$0=\vert \mathrm{D}_{g_{0}} u\vert_{g_{0}} (p_{m})\to \vert\mathrm{D}_{g_{0}} u\vert_{g_{0}} (p)$ and $t_{m}=u(p_{m})\to u(p)$.
Hence $\vert\mathrm{D}_{g_{0}} u\vert_{g_{0}} (p)=0$ and $u(p)=\overline{t}$, which is absurd. This concludes the proof of $(iii)$.
\end{proof}

\begin{remark}\label{disposizionelevelset}
{\em It is useful to observe that:
\begin{itemize}
\item[(i)]
for every $t\in(0,1)$, the set $\{u\geq t\}$ is connected;
\item[(ii)] 
for every $t\approx1$, the level set $\{u=t\}$ is 
regular and diffeomorphic to $\SSS^{n-1}$;
\item[(iii)] 
For every $t\in(0,1)$, $\{u\geq t\}=\overline{\{u>t\}}$ and $\{0\leq u\leq t\}=\overline{\{0<u<t\}}$.
\end{itemize}}
\noindent
We check $(ii)$ first. We start by observing that due to \eqref{eq9} 
$\vert\mathrm{D}_{g_{0}} u\vert_{g_{0}}\neq0$
in $\{u\geq t_0\}$, for some $0<t_0<1$. 
This fact establishes a diffemorphism between
$\{u\geq t_0\}$ and $\{u=t_0\}\times[t_0,1)$
and tells us at the same time that the level sets $\{u=t\}$
are pairwise diffeomorphic, for every $t\geq t_0$.
It is thus sufficient to show that $\{u=t_0\}$ is connected.
Suppose by contradiction that this is not the case. Without loss of generality we can assume that
$\{u=t_0\}$ can be decomposed into the disjoint union of 
two connected sets $C_1$ and $C_2$, indeed the same argument works {\em a fortiori} if the connected components are more than two.
Now, note that by definition of asymptotically
flat manifold, there exists a compact set $K\subset M$
such that $M\setminus\mathring K$ is diffeomorphic to 
$\R^n\setminus\mathring B$ by a chart at infinity $\psi$,
where $B$ is a suitable ball, and we can suppose, 
up to a bigger $t_0$, that $\{u\geq t_0\}\subseteq M\setminus\mathring K$.
Now, in view of the asymptotic expansion of $u$, there exist two positive constants $A<B$ such that
\[
\frac A{|x|^{n-2}}\,\leq\, 1-u\,\leq\,\frac B{|x|^{n-2}}.
\]
In particular, setting $R_0=[B/(1-t_0)]^{1/(n-2)}$,
we have that
\[
\{|x| >R_0\}\,\subseteq\,\{u\geq t_0\}\,\simeq\,
\big\{C_1\times[t_0,1)\big\}\sqcup\big\{C_2\times[t_0,1)\big\}.
\]
At the same time, we have that $\{|x| > R_0\}$ is connected and each $C_i\times[t_0,1)$ 
is a closed set of $M$,
so that indeed
$\{|x| >R_0\}\,\subseteq\,
C_i\times[t_0,1),$
for some $i\in\{1,2\}$.Therefore, we have that
\begin{align*}
\big\{C_1\times[t_0,1)\big\}\sqcup\big\{C_2\times[t_0,1)\big\}
&\,=\,\{u\geq t_0\}
\,\subseteq\,M\setminus\mathring K\\
&\,=\,
\big[(M\setminus\mathring K)\cap\{|x|\leq R_0\}\big]
\sqcup
\big[(M\setminus\mathring K)\cap\{|x|>R_0\}\big]\\
&\,\subseteq\,
\big[(M\setminus\mathring K)\cap\{|x|\leq R_0\}\big]
\sqcup
\big\{C_i\times[t_0,1)\big\},
\end{align*}
which gives the contradiction
that the noncompact set 
$\{C_j\times[t_0,1)\big\}$, where $j\in\{1,2\}\setminus\{i\}$,
is contained into
the compact one
$(M\setminus\mathring K)\cap\{|x|\leq R_0\}$.
Therefore, $\{u=t_0\}$ is connected.
Now, $\{\widetilde{u}=t_{0}\}$, with $\widetilde{u}:=u\circ\psi^{-1}$, is a compact and connected hypersurface of $\R^{n}$, having strictly positive sectional curvature,
as Riemannian submanifold of $(\R^{n},g_{\R^{n}})$ , 
up to a bigger $t_{0}$ and due to~\eqref{eq13}. 
%definite positive second fundamental form by~\eqref{eq13} and so also 
Hence, $\{\widetilde{u}=t_{0}\}$ is diffeomorphic to $\SSS^{n-1}$ by the 
Gauss map (see~\cite[Section~5.B]{GHF} for more details). 
Statement $(ii)$ thus follows, being $\{u=t_0\}$ and $\{\widetilde{u}=t_{0}\}$ diffeomorphic.

\smallskip
\noindent
To see $(i)$, observe first that if $t$ is a regular value of $u$, then $E_{t}:=\{u\geq t\}$ is a $n$--dimensional submanifold with boundary $\{u=t\}$. By Theorem~\ref{geometryoflevelset} and by the Maximum Principle, every connected component $C$ of $E_{t}$ is unbounded. Since $u\to 1$ at $\infty$, we have that $u(C)=[t,1)$, and hence 
$C\cap\{u=t_{0}\}\neq\emptyset$, for every $t_{0}\in(t,1)$. Then, $E_{t}$ is connected by $(ii)$.
If $t$ is a critical value of $u$, we let $\overline{t}>t$ be a regular value of $u$ 
such that $\{u=\overline{t}\}$ is connected 
and let $\{t_{m}\}$ be a nondecreasing sequence of regular value of $u$ such that 
$t_{m}<t$ and $t_{m}\to t$. 
% Note that 
% $\{0\leq u\leq\overline{t}\}$ is connected being a $n$--dimensional submanifold with boundary composed % by $\partial M$ and the smooth connected hypersurface $\{u=\overline{t}\}$, and hence $\{t_{m}\leq % u\leq \overline{t}\}$ is connected.
Hence, $\big\{ \,\{t_{m}\leq u\leq \overline{t}\}\,\big\}_{m\in \N}$ is a nonincreasing family of connected and compact sets in $M$, which is Hausdorff, and in turn the intersection 
% of its elements 
$\{t\leq u\leq \overline{t}\}$ is still connected.
In particular, we deduce that $E_t=\{t\leq u\leq \overline{t}\}\cup \{u\geq\overline{t}\}$ is connected.
% Joining all the statements, we get $(i)$.

\smallskip
\noindent
To check $(iii)$, note first that for every $t\in(0,1)$ regular value of $u$, the equalities are always true. If $t\in(0,1)$ is a critical value of $u$, by Theorem~\ref{geometryoflevelset} and by the Maximum Principle the interior of $\{0\leq u\leq t\}$ and the interior of $\{u\geq t\}$ are both disjoint from $\{u=t\}$, so that $(iii)$ is still true.
\end{remark}

\noindent
Let $(M,g_{0},u)$ be a sub-static harmonic triple, and let $t\in[0,1)$ be a real number.
We consider the spaces of measure 
$$\Big(\,\{u=t\},\mathcal{B}(\{u=t\}),\mathcal{H}^{n-1}_{M}\mres \{u=t\}\,\Big)\,\, \text{and}\,\, \Big(\,S:=\{u=t\}\setminus\mathrm{Crit}(u), \Lambda(S),\sigma_{g_{0}}:=\mu_{\iota^{*}g_{0}}\,\Big)\,.$$
Let $f:S\to \R$ be a continuous and consider the zero--extension $\mathring{f}$ of $f$ defined on $\{u=t\}$ as 
\begin{equation}
\mathring{f}(p)=
\begin{cases}
f(p)&\,\, \text{if $p\in S$},\\
0&\,\, \text{if $p\in \{u=t\}\cap\mathrm{Crit}(u)$}.\\
\end{cases}
\end{equation}
By Theorem~\ref{geometryoflevelset} and by definition of the Lebesgue integral, we have that $\mathring{f}\in L^{1}\big(\mathcal{H}^{n-1}_{M}\mres \{u=t\}\big)$ iff $f \in L^{1}(\sigma_{g_{0}})$ and
\begin{equation}
\label{caso1}
\int\limits_{\{u=t\}}\mathring{f}\,d\big(\mathcal{H}^{n-1}_{M}\mres \{u=t\}\big)=\int\limits_{S}f\,d\sigma_{g_{0}}.
\end{equation}
Similarly, if $f:\{u=t\}\to \R$ is a continuous function, then $f\in L^{1}\big(\mathcal{H}^{n-1}_{M}\mres \{u=t\}\big)$ iff $f|_{S}\in L^{1}(\sigma_{g_{0}})$ and 
\begin{equation}
\label{caso2}
\int\limits_{\{u=t\}} f \,d\big(\mathcal{H}^{n-1}_{M}\mres \{u=t\}\big)=\int\limits_{S}f|_{S}\,d\sigma_{g_{0}}.
\end{equation}
{\em In the rest of this paper, we will confuse the integrals
of cases \eqref{caso1} and \eqref{caso2}, denoting both 
by $\int\limits_{\{u=t\}}f  d\sigma_{g_{0}}$.}

%%%%%%%%%%%%%%%%%%%%%%%%%%%%%%%%%%%%%%%%
%%%%%%%%%%%%%%%%%%%%%%%%%%%%%%%%%%%%%%%%
%%%%%%%%%%%%%%%%%%%%%%%%%%%%%%%%%%%%%%%%
%%%%%%%%%%%%%%%%%%%%%%%%%%%%%%%%%%%%%%%%

\section{Monotonicity and Outer Rigidity Theorem}
%and the Capacitary Riemannian Penrose Inequality}}
\label{MonOutRigTheor}
In this section, we state and prove our Monotonicity and Outer Rigidity Theorem, 
which is then used to prove
the Capacitary Riemannian Penrose Inequality~\eqref{eq15}.
{\em From now on and unless otherwise stated, $(M,g_{0},u)$ will always be a sub-static harmonic triple, and, when referring to such triple, the subscript $g_{0}$ will be dropped. The only exception is $\vert \SSS^{n-1}\vert$, which always stands for the Euclidean volume of $\SSS^{n-1}$.}

\begin{theorem}[Monotonicity and Outer Rigidity Theorem]
\label{Monotonicity--Partial Rigidity Theorem}
Let $(M,g_{0},u)$ be a sub-static harmonic triple, and let $ F_{\beta}:\tau \in[1,+\infty)\to [0,+\infty)$ be the function defined by 
\begin{align}\label{F}
F_{\beta}(\tau):=(1+\tau)^{\beta\, \frac{n-1}{n-2}}\,\int\limits_{\big\{u=\sqrt{\frac{\tau-1}{\tau+1}}\,\big\}}\,\,\vert \mathrm{D}u \vert^{\beta+1}\,d\sigma\,,
\end{align}
for every $\beta\geq0$. Then, the following properties hold true.
\begin{enumerate}[ label=(\roman*)]
\item \underline{Differentiability, Monotonicity and Outer Rigidity}: for every $\beta>\frac{n-2}{n-1}$, the function $F_{\beta}$ is continuously differentiable with nonpositive derivative in $(1,\infty)$.
Moreover, if there exists $\tau_{0} \in (1,\infty)$ such that $F'_{\beta}(\tau_{0})=0$ for some $\beta >\frac{n-2}{n-1}$, then 
%$t_{0}=\sqrt{\frac{\tau_{0}-1}{\tau_{0}+1}}$ is a regular value of $u$, and 
$(\,\{u\geq t_{0}\}, g_{0}\,)$, 
where $t_{0}=\sqrt{\frac{\tau_{0}-1}{\tau_{0}+1}}$, is isometric to
$$\Big(\,[r_{0},+\infty)\times \SSS^{n-1}\,, \frac{dr \otimes dr}{1-2\mathcal{C}r^{2-n}}+r^{2}g_{\SSS^{n-1}} \Big)\,,\,\,\quad r_{0}=[\mathcal{C}(1+\tau_{0})]^{\frac{1}{n-2}}\,.$$ 
\item \underline{Convexity}: for every $\beta>\frac{n-2}{n-1}$, the function $F_{\beta}$ is convex on $[1,\infty)$.
\end{enumerate}
\end{theorem}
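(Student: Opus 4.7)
The plan is to reduce the statement to the corresponding monotonicity and rigidity results in the conformally-related setting of Section~\ref{conformalsetting}. First I introduce the change of variable $t=\sqrt{(\tau-1)/(\tau+1)}$, so that $\tau=1$ corresponds to $\partial M$ and $\tau\to\infty$ corresponds to infinity, and I rewrite $F_\beta(\tau)$ in terms of $t$, noting that $(1+\tau)^{\beta(n-1)/(n-2)}$ is a positive multiple of $(1-t^2)^{-\beta(n-1)/(n-2)}$, so $F_\beta$ agrees up to a constant with the $V_\beta$ of the introduction and, after conformal change $g=(1-u^2)^{2/(n-2)}g_0$, with the $\Phi_\beta$ of the conformal setting.

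For part (i), I would apply the fundamental integral identity of Proposition~\ref{itegralidenty} to express $\Phi'_\beta(s)$ as in~\eqref{Phi'}, namely as an integral over $\{u=t\}$ of a pointwise nonnegative integrand. The nonnegativity uses two inputs: the sub-static condition $u\,\mathrm{Ric}-\mathrm{D}^2 u\geq 0$, and the Refined Kato Inequality $|\mathrm{D}|\mathrm{D}u||^2\leq \tfrac{n-1}{n}|\mathrm{D}^2 u|^2$, which precisely fixes the threshold $\beta>\tfrac{n-2}{n-1}$. The delicate step is justifying this derivative formula at values $t$ for which $\{u=t\}$ meets $\mathrm{Crit}(u)$: I remove from $M$ a shrinking tubular neighbourhood of $\mathrm{Crit}(u)$, apply the divergence theorem there to the vector field of~\eqref{divY}, and pass to the limit using the Hausdorff bound $\dim_{\mathcal H}\mathrm{Crit}(u)\leq n-2$ of Theorem~\ref{geometryoflevelset}, which controls the boundary contributions since $\beta+1>\tfrac{2n-3}{n-1}$ gives integrability of $|\mathrm{D}u|^{\beta+1}$ near the critical set. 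Sard's theorem ensures regular values of $u$ are dense in $[0,1)$, so this yields $F_\beta\in C^0$ and a one-sided derivative formula that extends continuously, giving $F_\beta\in C^1$ with $F'_\beta\leq 0$. For the rigidity statement, $F'_\beta(\tau_0)=0$ forces the nonnegative integrand to vanish identically on $\{u\geq t_0\}$, giving simultaneously equality in the Refined Kato Inequality and $u\,\mathrm{Ric}=\mathrm{D}^2 u$; this is exactly the condition that makes the level sets of $u$ totally umbilical with vanishing trace-free Hessian for $|\mathrm{D}u|$, so $\{u\geq t_0\}$ splits as a warped product over a round sphere, and integrating the resulting ODE (with asymptotics fixed by~\eqref{eq9}) yields the Schwarzschild profile with mass $\mathcal C$.

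For part (ii), convexity follows from differentiating the integral representation of $F'_\beta$ once more: the same vector-field construction shows that the derivative of the nonnegative integrand contributes nonnegatively, so $F''_\beta\geq 0$ on $(1,\infty)$, and extension to $\tau=1$ uses continuity. The main obstacle throughout is the analytic handling of $\mathrm{Crit}(u)$: in the static case of~\cite{Virginia1} analyticity gave local finiteness of singular values, whereas here one only has the Hardt--Simon type measure estimates, so the cutoff procedure must be carried out with care to keep the divergence of the modified vector field nonnegative throughout the limit, which is exactly the technical content pushed to Section~\ref{conformalsetting}.
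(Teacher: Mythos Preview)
Your overall strategy---reduce to the conformal picture, express $\Phi_\beta'$ via the integral identity, and read off nonpositivity from the sub-static inequality plus the refined Kato inequality---matches the paper's, and your description of how the critical set is handled is essentially right (cutoffs rather than literal tubular neighbourhoods, with Lemma~\ref{lemmatecnico} supplying the quantitative control). Two points deserve correction, one minor and one substantive.

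For convexity, you propose to differentiate $F_\beta'$ again and argue $F_\beta''\ge 0$ pointwise. This runs into trouble at critical values of $u$, where the second derivative need not exist. The paper avoids this entirely: from the integral identity one gets directly that $s\mapsto \Phi_\beta'(s)/\sinh s$ is nondecreasing (equation~\eqref{eq52}), and under the change of variables this says precisely that $F_\beta'$ is nondecreasing on $(1,\infty)$, hence $F_\beta$ is convex. No second derivative is needed.

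The substantive gap is in the outer rigidity. You write that vanishing of the integrand ``makes the level sets of $u$ totally umbilical\ldots so $\{u\ge t_0\}$ splits as a warped product over a round sphere.'' The splitting part is fine---in the conformal metric one gets $\nabla^2\varphi\equiv 0$, hence $|\nabla\varphi|_g$ constant, $\mathrm{Ric}_g\ge 0$, and Kasue's theorem gives an isometric Riemannian product $[0,\infty)\times\{\varphi=s_0\}$. But nothing so far tells you the cross-section is a \emph{round} sphere; a priori it is only some compact Riemannian manifold diffeomorphic to $\mathbb{S}^{n-1}$. The paper closes this by an explicit curvature computation: after writing $g_0$ as the induced warped product over $\partial N=\{\varphi=s_0\}$, one computes $|\mathrm{Rm}_{g_0}|^2$ along a radial ray (formula~\eqref{eq22}), lets $r\to\infty$, and uses asymptotic flatness ($|\mathrm{Rm}_{g_0}|=O(|x|^{-(p+2)})$) to force
\[
\mathrm{Rm}_{g_{\partial N}}=-\tfrac{1}{2}(2\mathcal{C})^{-2/(n-2)}\,g_{\partial N}\owedge g_{\partial N},
\]
i.e.\ constant sectional curvature $(2\mathcal{C})^{-2/(n-2)}$. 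Only then, together with the topological identification $\partial N\simeq\mathbb{S}^{n-1}$ from Remark~\ref{disposizionelevelset}, does one conclude $(\partial N,g_{\partial N})$ is the round sphere of the correct radius. Your ODE-plus-asymptotics sketch does not supply this step; you should either carry out the curvature argument or indicate what replaces it.
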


\noindent
We remark that the functions {\em$F_{\beta}$ are well--defined}, in view of Theorem~\ref{geometryoflevelset} and since the integrand function in~\eqref{F} is bounded on every level set of $u$.
Note that, once Theorem~\ref{Monotonicity--Partial Rigidity Theorem} is proven, The Dominated Convergence Theorem allows us to extend the monotonicity and convexity of $F_{\beta}$ also to the case $\beta=\frac{n-2}{n-1}$\,.
Moreover, on the values $\tau$ such that $\Big\{u=\sqrt{\frac{\tau-1}{\tau+1}}\Big\}$ is regular and thus on a.e. $\tau>1$ due to Theorem~\ref{geometryoflevelset} $(iii)$, the function $F_{\beta}$ is twice differentiable for each $\beta>\frac{n-2}{n-1}$, with first and second derivative given by 
\begin{align}
F'_{\beta}(\tau)&=-\beta\,\frac{(\tau+1)^{\beta\frac{n-1}{n-2}-\frac{3}{2}}}{\sqrt{\tau-1}}\int\limits_{\big\{u=\sqrt{\frac{\tau-1}{\tau+1}}\,\big\}}\,\vert \mathrm{D}u \vert^{\beta}\Big[\mathrm{H}-\frac{n-1}{n-2}\,\, \frac{2u}{1-u^{2}}\,\,\vert \mathrm{D}u \vert\Big]d\sigma\,,
\label{derivata_formale}\\
F''_{\beta}(\tau)&=
\beta \frac{(\tau+1)^{\beta\frac{n-1}{n-2}-3}}{\tau-1} \Bigg\{\Big(\beta-\frac{n-2}{n-1}\Big)\int\limits_{\big\{u=\sqrt{\frac{\tau-1}{\tau+1}}\,\big\}}\vert \mathrm{D}u \vert^{\beta-1} \Big[\,\mathrm{H}-\frac{n-1}{n-2}\,\,\frac{2u}{1-u^{2}}\,\,\vert \mathrm{D}u \vert\,\Big]^{2}\,d\sigma\\
&+\beta \int\limits_{\big\{u=\sqrt{\frac{\tau-1}{\tau+1}}\,\big\}}\vert \mathrm{D}u \vert^{\beta-3}\,\,\big\vert \,\mathrm{D}^{\mathrm{T}}\,\vert \mathrm{D}u \vert\,\big\vert^{2}d\sigma
+\int\limits_{\big\{u=\sqrt{\frac{\tau-1}{\tau+1}}\,\big\}}\vert \mathrm{D}u \vert^{\beta-1}\,\,\Big[\,\vert \mathrm{h}\vert^{2}\,-\,\frac{1}{n-1}\,\,\vert \mathrm{H}\vert^{2}\, \Big]\, d\sigma\\
&+\int\limits_{u=\sqrt{\frac{\tau-1}{\tau+1}}\,\big\}}\vert \mathrm{D}u \vert^{\beta-1}\,\Bigg[\,\mathrm{Ric}(\nu,\nu)-\frac{\mathrm{D}^{2}u\,(\nu,\nu)}{u}\,\Bigg]\,d\sigma\,\,\Bigg\}\, .\label{F''}  
\end{align}
In the computation, we have used the first normal variation of the 
volume and the mean curvature of $\{u=t\}$, and the Divergence Theorem.
The symbols $\mathrm{H}$ and $\mathrm{h}$ stand respectively for
the mean curvature and the second fundamental form of the smooth $(n-1)$--dimensional submanifold $\big\{u=\sqrt{\frac{\tau-1}{\tau+1}}\,\big\}$, with respect to the $\infty$--pointing unit normal vector field $\nu=\frac{\mathrm{D}u}{\vert \mathrm{D}u \vert}$. Also, $\mathrm{D}^{\mathrm{T}}$ denotes the tangential part of the gradient, 
that is
$$\mathrm{D}^{\mathrm{T}}f =\mathrm{D}f-g_{0}(\mathrm{D}f,\nu)\nu,\,$$ 
for every $f\in C^{1}(M)$.\\
\medskip

\noindent
To prove Theorem~\ref{Monotonicity--Partial Rigidity Theorem},
we use the results of Section~\ref{conformalsetting}, which are obtained in the conformal
setting defined by
\begin{align}\label{eq50}
g=(1-u^{2})^{\frac{2}{n-2}}g_{0}\,,\,\,\quad\quad\quad\,\, \varphi=\log\Big(\frac{1+u}{1-u}\Big)\,.
\end{align}
Denoting by $\nabla$ and $\Delta_{g}$ the Levi--Civita connection and the Laplace--Beltrami operator of $g$, the
triple $(M,g,\varphi)$ satisfies the following system.
\begin{equation}\label{sistconf}
\left\{
\begin{aligned}
\mathrm{Ric}_{g}-\coth(\varphi)\,\nabla ^{2}\varphi+\frac{1}{n-2}\,d\varphi \otimes d\varphi-\frac{1}{n-2}\,\vert \nabla \varphi \vert^{2}_{g}\,\ g \,&\geq0 &&\mathrm{in}\, \mathring{M},\\
\Delta_{g}\varphi  &=0   &&\mathrm{in} \,  M,\\
\varphi &=0 &&\mathrm{on} \,  \, \partial M,\\
\varphi &\to +\infty\,&&\mathrm{at}\,\infty.
\end{aligned}
\right .
\end{equation}

\noindent
Moreover, we have that
\begin{align}
\vert \nabla \varphi \vert_{g}^{2}= 4\,\vert \mathrm{D}u \vert^{2}(1-u^{2})^{-2\frac{n-1}{n-2}}\longrightarrow\big( 2\,\mathcal{C}\big)^{-\frac{2}{n-2}}(n-2)^{2}\,\,\text{at}\, \,\infty\,,\,\tag{$\star$}
\end{align}
as we will see in the proof of Lemma~\ref{lemmabound}.

\begin{remark}\label{remark2}
Since $\mathrm{Crit}(\varphi)=\mathrm{Crit}(u)=\{\vert \nabla \varphi \vert_{g}=0\}$ by the equality in~($\star$) and since $\{\varphi=s\}=\{u=\tanh\big(\frac{s}{2}\big)\}$ by~\eqref{eq50}, using Theorem~\ref{geometryoflevelset} and Remark~\ref{disposizionelevelset}
we deduce that: {\em $\mathrm{Crit}(\varphi)$ has zero $\mu_{g}$--measure and zero $(n-1)$--Hausdorff measure in $(M,g)$; the level sets of $\varphi$ have finite $(n-1)$--Hausdorff measure in $(M,g)$ and in particular the smooth $(n-1)$--dimensional submanifolds $\{\varphi=s\}\setminus\mathrm{Crit}(\varphi)$ have finite $g$--area, i.e. finite $\sigma_{g}$--measure.
Moreover, $\{\varphi\geq s\}$ is connected for every $s\geq 0$ and there exists $s_0\geq0$
such that $\{\varphi =s\}$ is regular and diffeomorphic to $\SSS^{n-1}$,
for every $s\geq s_0$.
Similar comments as those at the end of Subsection~\ref{measandintlevelset} hold,
regarding the relation between integration and $\mathrm{Crit}(\varphi)$.
%similarly as did in Section~\ref{measandintlevelset}, we may confuse the Lebesgue integral of a summable function $f:\widetilde{S}\to \R$ with the correspondent summable $\mathring{f}$ or similarly $f:\{\varphi=s\}\to \R$ with $f|_{\widetilde{S}}$ on the measure spaces 
%$\big(\,\widetilde{S}:=\{\varphi=s\}\setminus\mathrm{Crit}(\varphi), \Lambda(\widetilde{S}),\sigma_{g}:=\mu_{\iota^{*}g}\,\big)\,\, \text{and}\,\,\big(\,\{\varphi=s\},\mathcal{B}(\{\varphi=s\}),\mathcal{H}^{n-1}_{M;g}\mres \{\varphi=s\}\,\big) .$
}
\end{remark}
\medskip

\noindent
Let $\Phi_{\beta}:[0,\infty)\to \R$ be the function defined by formula
\begin{align}\label{Phibetasectu}
\Phi_{\beta}(s):=\int\limits_{\{\varphi=s\}}\vert \nabla \varphi \vert^{\beta+1}_{g}\,d\sigma_{g}\,,
\end{align}
for every $\beta\geq 0$. 
For the convenience of the reader, we anticipate from Section~\ref{conformalsetting}
the properties of $\Phi_{\beta}$ that we are going to use.
\begin{enumerate}
\item[($\circ$)] For every $\beta\geq0$, the function $\Phi_{\beta}(s)$ is continuous in $[0,+\infty)$.
\item[($\diamond$)] For every $\beta>\frac{n-2}{n-1}$, the function $\Phi_{\beta}$ is continuously differentiable in $(0,+\infty)$. The derivative $\Phi'_{\beta}$  is nonpositive, satisfies for every $S>s>0$ 
\begin{align}\label{eq52}
\frac{\Phi_{\beta}'(S)}{\sinh(S)}-\frac{\Phi_{\beta}'(s)}{\sinh(s)}\geq0\,,
\end{align}
 and admits for every $s>0$ the integral representation
\begin{align*}
\Phi'_{\beta}(s)=-\beta\, \sinh( s) \,\int\limits_{\{\varphi>s\}}\frac{\vert \nabla \varphi \vert^{\beta-2}_{g}\,\Big[\,(\beta-2)\,\Big\vert\nabla\vert \nabla \varphi \vert_{g}\,\Big\vert_{g}^{2}+\vert \nabla^{2} \varphi \vert_{g}^{2}+Q(\nabla \varphi ,\nabla \varphi )\, \Big]}{\sinh\varphi}\,d\mu_{g}\leq 0\,.
\end{align*}
\item[($\diamond\,\diamond$)] If there exists $s_{0}>0$ such that $\Phi'_{\beta}(s_{0})=0$ for some $\beta>\frac{n-2}{n-1}$, then $\{\varphi=s_{0}\}$ is connected and $(\{\varphi\geq s_{0}\},g)$ is isometric to $\big([0,+\infty)\times 
\{\varphi=s_{0}\},d\rho\otimes d\rho+g_{\{\varphi=s_{0}\}})$, where $\rho$ is the $g$--distance function to $\{\varphi=s_{0}\}$ and $\varphi$ is an affine function of $\rho$ in $\{\varphi\geq s_{0}\}$.
If $\Phi_{\beta}$ is constant for some $\beta>\frac{n-2}{n-1}$, then $\partial M$ is connected and 
$(M,g)$ is isometric to $\big([0,+\infty)\times \partial M,d\rho\otimes d\rho+g_{\partial M})$, where $\rho$ is the $g$--distance function to $\partial M$ and $\varphi$ is an affine function of $\rho$.
\end{enumerate}
In the above list we have gathered and summarised the results 
contained in Lemma~\ref{firstitegralidenty}, Proposition~\ref{Conforme Monotonicity theorem}, and Corollary~\ref{corConforme Monotonicity theorem}.
\medskip

\noindent
\begin{proof}[\underline{Proof of Theorem~\ref{Monotonicity--Partial Rigidity Theorem}. Step $1$: Differentiability, Monotonicity and Convexity}.]
For every $\beta\geq0$ and for all $\tau \in[1,+\infty)$, we note that
\begin{align}\label{eq59}
F_{\beta}(\tau)=2^{\frac{\beta}{n-2}-1}\,\, \Phi_{\beta}\Bigg(\log\Bigg(\,\frac{\sqrt{\tau+1}+\sqrt{\tau-1}}{\sqrt{\tau+1}-\sqrt{\tau-1}}\,\Bigg) \Bigg) \,.
\end{align}
Consequently, by $(\circ)$, we deduce that {\em for every $\beta\geq0$ the function $F_{\beta}$ is continuous in $[1,+\infty)$}.
By $(\diamond)$ and by~\eqref{eq59}, we obtain immediately that for every $\beta > \frac{n-2}{n-1}$ the function $F_{\beta}$ is continuously differentiable in $(1,+\infty)$, with
\begin{align} \label{F'betainterminiPhi'beta}
F'_{\beta}(\tau)=2^{\frac{\beta}{n-2}-1}\,\frac{1}{\sqrt{\tau^{2}-1}}\,\Phi'_{\beta}\Bigg(\log\Bigg(\,\frac{\sqrt{\tau+1}+\sqrt{\tau-1}}{\sqrt{\tau+1}-\sqrt{\tau-1}}\,\Bigg) \Bigg)\,.
\end{align}
In particular, from $(\diamond)$ we get $F'_{\beta}\leq 0$.
As for the convexity, observing
$$\frac{\Phi'_{\beta}\Big(\log\Big(\,\frac{\sqrt{\tau+1}+\sqrt{\tau-1}}{\sqrt{\tau+1}-\sqrt{\tau-1}}\,\Big) \Big)}{\sinh \big(\log\big(\,\frac{\sqrt{\tau+1}+\sqrt{\tau-1}}{\sqrt{\tau+1}-\sqrt{\tau-1}}\,\big) \big)}=\frac{\Phi'_{\beta}\Big(\log\Big(\,\frac{\sqrt{\tau+1}+\sqrt{\tau-1}}{\sqrt{\tau+1}-\sqrt{\tau-1}}\,\Big) \Big)}{\sqrt{\tau^{2}-1}}\,,$$
and that the function $\log\big(\,\frac{\sqrt{\tau+1}+\sqrt{\tau-1}}{\sqrt{\tau+1}-\sqrt{\tau-1}}\,\big)$ is nondecreasing, from~\eqref{eq52} we obtain that $F'_{\beta}\,$ is nondecreasing in $(1,+\infty)$, and then by the continuity, $F_{\beta}$ is convex in $[1,+\infty)$.
\bigskip

\noindent
{\em \underline{Step $2$: Outer Rigidity.}} Let us assume that there exists $\tau_{0} \in (1,\infty)$ such that $F'_{\beta}(\tau_{0})=0$ for some $\beta >\frac{n-2}{n-1}$.
Then, by equality~\eqref{F'betainterminiPhi'beta}, $\Phi'_{\beta}(s_{0})=0$ for $s_{0}=\log\big(\,\frac{\sqrt{\tau_{0}+1}+\sqrt{\tau_{0}-1}}{\sqrt{\tau_{0}+1}-\sqrt{\tau_{0}-1}}\,\big)$, and hence, 
by $(\diamond\,\diamond)$, $(\{\varphi\geq s_{0}\},g)$ is isometric to $\big([0,+\infty)\times \{\varphi=s_{0}\},d\rho\otimes d\rho+g_{\{\varphi=s_{0}\}})$, where $\rho$ is the $g$--distance function to $\{\varphi=s_{0}\}$ and 
$\varphi=(n-2)\, (2\mathcal{C})^{-\frac{1}{n-2}}\rho+s_{0}$, because $\vert \nabla \rho\vert_{g}\equiv1$ and in view of the limit in~$(\star)$.
Setting $t_{0}=\tanh \frac{s_{0}}{2}$, consider $N$ the submanifold with boundary $\{\varphi\geq s_{0}\}=\{u\geq t_{0}\}$. Writing
\begin{align}
&N &\,&  &&[0,+\infty)\times \partial N  &\,&  &&[s_{0},+\infty)\times \partial N     &\,&      &&[t_{0},1)\times \partial N \\ 
&g &\,&   &&d\rho\otimes d\rho+g_{\partial N}    &\,&       & &\frac{d\varphi\otimes d\varphi}{(n-2)^{2}\, (2\mathcal{C})^{-\frac{2}{n-2}}}+g_{\partial N}   &\,& &&\frac{2^{2\,\frac{n-1}{n-2}}\,\, \mathcal{C}^{\frac{2}{n-2}} }{(n-2)^{2}\,(1-u^{2})^{2} }\,\, du\otimes du  + g_{\partial N}\\
&p &\mapsto&   &&\big(\rho,q\big)        &\mapsto&     &&\big (\varphi=(n-2)\, (2\mathcal{C})^{-\frac{1}{n-2}} \,\rho, q\big)    &\mapsto&                      && \Big(u=\tanh\, \frac{\varphi}{2} , q\Big)\,,                    
\end{align}
the Riemannian manifolds in the first row, whose metrics are indicated in the second row, are pairwise isometric through the applications written in the third row. 
We recall that the application $p\to (\rho,q)$ in the third row is the inverse of the diffeomorphism given by the normal exponential map, i.e. the application which associates to every point $p$ of $N$ the couple having as first coordinate the $g$--distance of $p$ from $\partial N$ and as second coordinate the point $q$ of $\partial N$ that realizes such distance.
Then, in view of~\eqref{eq50} and with the same notation as above, the following Riemannian manifolds are isometric.
\begin{align}
&N  &\,&   &&[t_{0},1)\times \partial N  &\,&  &&[r_{0},+\infty)\times \partial N\\
&g_{0}  &\,&   &&\frac{2^{2\,\frac{n-1}{n-2}}\,\, \mathcal{C}^{\frac{2}{n-2}} }{(n-2)^{2}\,(1-u^{2})^{2\,\frac{n-1}{n-2}} }\,\, du\otimes du  +(1-u^{2})^{-\frac{2}{n-2}} g_{\partial N} &\,&  &&\frac{dr \otimes dr}{1-2\,\mathcal{C} \,r^{2-n}}+ (2\,\mathcal{C})^{-\frac{2}{n-2}}\, r^{2}g_{\partial N} \\
&p  &\mapsto&   && \big(u, q\big)     &\mapsto&                 && \Big (r=\Big(\,\frac{2\mathcal{C}}{1-u^{2}}\,\Big)^{\frac{1}{n-2}},q\Big)\,,\, \label{eq21}
\end{align}
where $r_{0}=\big(\frac{2\mathcal{C}}{1-t_{0}^{2}}\big)^{\frac{1}{n-2}}$.
Doing some computations, we obtain that
\begin{align}\label{eq22}
\vert \mathrm{Rm}\vert^{2}\,(p)=(2\mathcal{C})^{\frac{4}{n-2}} \,r^{-4}\,(p)\,\,\Big\vert \mathrm{Rm}_{g_{\partial N}}+\frac{1-2\,\mathcal{C} \,r^{2-n}}{2^{\frac{n}{n-2}}\,\,\mathcal{C}^{\frac{2}{n-2}}}\, g_{\partial N}\owedge g_{\partial N}\,\Big\vert _{g_{\partial N}}^{2}\,(q)+c\,r^{-2n}\,(p)\,,
\end{align}
where the convection followed for the Riemannian curvature tensor is that given in~\cite{petersen1}, $c$ is a suitable positive constant and $q$ is the point of $\partial N$ that realizes the $g$--distance of $p$ from $\partial N$.
Denoting by $ \Theta$ the diffeomorphism from $N$ to $[r_{0},+\infty)\times \partial N$ introduced in~\eqref{eq21}, for every $q_{0}\in \partial N$ we consider the curve 
$$\gamma:r\in [r_{0},+\infty)\to \Theta^{-1}(r,q_{0})\in M $$
and observe from~\eqref{eq22} that 
\begin{align}\label{eq23}
(2\mathcal{C})^{-\frac{4}{n-2}} r^{4}\,\vert \mathrm{Rm}\vert^{2}\,(\gamma(r)) \xrightarrow{r\rightarrow+\infty}\,\Big\vert \mathrm{Rm}_{g_{\partial N}}+\frac{(2\mathcal{C}\,)^{-\frac{2}{n-2}}}{2} g_{\partial N}\owedge g_{\partial N}\,\Big\vert _{g_{\partial N}}^{2}\,(q_{0})\,.
\end{align}
At the same time, we have that
\begin{align}\label{eq24}
r^{4}\,\vert \mathrm{Rm}\vert^{2}\,(\gamma(r)) \xrightarrow{r\rightarrow+\infty} 0\,.
\end{align}
This is because $g_{0}$ is asymptotically flat according to Definition~\ref{asymptoticallyflatmanifold} and by~\eqref{eq11}, which yields in particular
\begin{align}
\vert \mathrm{Rm}\vert =O(\vert x\vert_{\R^{n}}^{-(p+2)})\quad\quad\text{and}\quad\quad \frac{r}{\vert x\vert_{\R^{n}}}&\xrightarrow{\vert x\vert_{\R^{n}}\rightarrow+\infty}1\,,
\end{align}
for some $p>\frac{n-2}{2}$.
Combining~\eqref{eq23} and~\eqref{eq24}, the arbitrariness of the point $q_{0}$ in $\partial N$ gives that $$\mathrm{Rm}_{g_{\partial N}}=-\frac{(2\mathcal{C}\,)^{-\frac{2}{n-2}}}{2} g_{\partial N}\owedge g_{\partial N}\,.$$
Hence the sectional curvature of the Riemannian manifold $(\partial N, g_{\partial N})$ is constant and identically equal to 
$(2\mathcal{C}\,)^{-\frac{2}{n-2}}$. Then, being all the level sets 
$\{u=t\}$ with 
$t\approx1$ regular and diffeomorphic to $\SSS^{n-1}$ 
as observed in Remark~\ref{disposizionelevelset}, 
for~\cite[Section~3.F]{GHF} $(\partial N, g_{\partial N})$ and 
$(\SSS^{n-1}, (2\mathcal{C}\,)^{\frac{2}{n-2}}g_{\SSS^{n-1}})$ are isometric.
Then, $(\{u\geq t_{0}\},g_{0})$ is isometric to the submanifold $\big(\,[r_{0},+\infty)\times \SSS^{n-1}\,, \frac{dr \otimes dr}{1-2\mathcal{C}r^{2-n}}+r^{2}g_{\SSS^{n-1}} \big)$ of the Schwarzschild manifold with associated ADM mass given by $ \mathcal{C}$.
\end{proof}
\bigskip

\noindent
\begin{proof}[\underline{Proof of Theorem~\ref{capacitaryriemannianpenroseinequality}. Spep $1$: Inequality}.]
By Theorem~\ref{Monotonicity--Partial Rigidity Theorem}, we have that
$F_{\beta}(\tau_{0})\geq \lim_{\tau\to+\infty}\,F_{\beta}(\tau)$,
for every $\tau_{0}>1$. In particular, since $F_{\beta}$ is continuous in $[1,+\infty)$ due to the step $1$ of Theorem~\ref{Monotonicity--Partial Rigidity Theorem}, we have that
\begin{align}\label{eq18}
F_{\beta}(1)\geq \lim_{\tau\to+\infty}\,F_{\beta}(\tau)\,,
\end{align}
for every $\beta>\frac{n-2}{n-1}$. Since $\mathrm{D}^{2}u \equiv 0$ on $\partial M$ and since $\partial{M}$ of $M$ is connected, $\vert \mathrm{D} u \vert$ is constantly equal to 
$\frac{(n-2)\,\mathcal{C}\,\vert \SSS^{n-1}\vert_{\R^{n}}\,}{\vert \partial M\vert}\,$, by formula~\eqref{eq10}. In particular, we have that
\begin{align}\label{eq19}
F_{\beta}(1)=\frac{2^{\beta\,\frac{n-1}{n-2}} \,\,(n-2)^{\beta+1}\,\,\mathcal{C}^{\beta+1}\,\,\vert \SSS^{n-1}\vert_{\R^{n}}^{\beta+1}}{\vert \partial M\vert^{\beta}}
\end{align}
By $(\star)$, we know that
\begin{align}\label{eq16}
\frac{\vert \mathrm{D}u \vert\,\,\,\,\,\,}{\,\,(1-u^{2})^{\frac{n-1}{n-2}}}\, \longrightarrow \, 2^{-\frac{n-1}{n-2}}\, (n-2)\,\mathcal{C}^{-\frac{1}{n-2}}\quad \,\, \text{at}\,\,\infty 
\end{align}
Therefore, fixed $\varepsilon>0$, there exists $1< \tau_{0}<+\infty$ such that 
\begin{align}\label{eq17}
\vert \mathrm{D}u \vert\geq \,(1-u^{2})^{\frac{n-1}{n-2}}\Big( 2^{-\frac{n-1}{n-2}}\, (n-2)\,\mathcal{C}^{-\frac{1}{n-2}}-\varepsilon\Big)
\end{align}
in $\big\{u\geq\sqrt{\frac{\tau_{0}-1}{\tau_{0}+1}}\,\big\}$ and the level sets $\big\{u=\sqrt{\frac{\tau-1}{\tau+1}}\,\big\}$ are regular for all $\tau\geq \tau_{0}$.
Therefore, for every $\tau\geq \tau_{0}$ we have that
\begin{align}
F_{\beta}(\tau)&=(1+\tau)^{\beta\, \frac{n-1}{n-2}}\,\int\limits_{\big\{u=\sqrt{\frac{\tau-1}{\tau+1}}\,\big\}}\,\,\vert \mathrm{D}u \vert^{\beta+1}\,d\sigma\\
&\geq (1+\tau)^{\beta\, \frac{n-1}{n-2}}\,\int\limits_{\big\{u=\sqrt{\frac{\tau-1}{\tau+1}}\,\big\}}\,\, (1-u^{2})^{\beta\,\frac{n-1}{n-2}}\,\Big( 2^{-\frac{n-1}{n-2}}\, (n-2)\,\mathcal{C}^{-\frac{1}{n-2}}-\varepsilon\Big)^{\beta} \,\,\vert \mathrm{D}u \vert\,d\sigma\\
&=2^{\beta\,\frac{n-1}{n-2}}\,\Big( 2^{-\frac{n-1}{n-2}}\, (n-2)\,\mathcal{C}^{-\frac{1}{n-2}}-\varepsilon\Big)^{\beta} \,\,\int\limits_{\partial M }\,\vert \mathrm{D}u \vert\,d\sigma\\
&=2^{\beta\,\frac{n-1}{n-2}}\,(n-2)\,\mathcal{C} \,\Big( 2^{-\frac{n-1}{n-2}}\, (n-2)\,\mathcal{C}^{-\frac{1}{n-2}}-\varepsilon\Big)^{\beta}\, \vert \SSS^{n-1}\vert_{\R^{n}}\,,
\end{align}
where in the second equality we have used the Divergence Theorem couple with the fact that $u$ is harmonic, and in the third equality we have used formula~\eqref{eq10}. Since $\varepsilon$ is arbitrary, we get
$$\lim_{\tau\to+\infty}\,F_{\beta}(\tau)\geq\,(n-2)^{\beta+1}\,\mathcal{C}^{1-\frac{\beta}{n-2}} \, \vert \SSS^{n-1}\vert_{\R^{n}}\,.$$
In a similar way we can obtain the reverse inequality, so that
\begin{align}\label{eq20}
\lim_{\tau\to+\infty}\,F_{\beta}(\tau)=\,(n-2)^{\beta+1}\,\mathcal{C}^{1-\frac{\beta}{n-2}} \, \vert \SSS^{n-1}\vert_{\R^{n}}\,.
\end{align}
Joining the formulas in~\eqref{eq18},~\eqref{eq19} and~\eqref{eq20}, we obtain the desired inequality~\eqref{eq15}.\\
\smallskip

\noindent
\underline{{\em Step $2$: Rigidity}}.
If $(M,g_{0})$ is isometric to the Schwarzschild manifold with ADM mass $m>0$, then the right--hand side and the left--hand side of~\eqref{eq15} are both equal to 
$m$, by direct computation.\\
Suppose now that the equality holds in~\eqref{eq15}. Then, 
by Step 1 and for every $\beta>\frac{n-2}{n-1}$, 
the function $F_{\beta}$ is constant.
In turn, $\Phi_{\beta}$ is constant, being
$$\Phi_{\beta}(s)=2^{1-\frac{\beta}{n-2}}\, F_{\beta}\Bigg(\,\frac{1+\tanh^{2}\big( \frac{s}{2}\big)}{1-\tanh^{2}\big( \frac{s}{2}\big)}\,\Bigg)\,.$$
Finally, $(\diamond\,\diamond)$ and the very same argument of the proof of 
the Outer Rigidity in 
Theorem~\ref{Monotonicity--Partial Rigidity Theorem} 
imply first that $(M,g)$ is isometric to 
$$\big([0,+\infty)\times \partial M,d\rho\otimes d\rho+g_{\partial M}),$$
where $\rho$ is the $g$--distance to $\partial M$ and $\varphi$ is an affine function of $\rho$, and secondly that $(M,g_{0})$ is isometric to the Schwarzschild manifold with ADM mass $\mathcal{C}$.
\end{proof}

\section{Conformal setting}\label{conformalsetting}
Let us consider the conformal change 
$g$ of the metric $g_0$
introduced in~\eqref{eq50}
which is well--defined being $0\leq u<1$ in $M$. 
The metric $g$ is complete, since any $g$--geodesic $\gamma$ parametrized by $g$--arc length defined on a bounded interval $[0,a)$ can be extended to a continuous path on $[0,a]$. 
Indeed, if $\gamma$ has infinity length with respect to $g_{0}$, there exists a sequence $\{t_{m}\}_{m\in \N}$ such that $\gamma(t_{m})\to \infty$ (being $\gamma$ not contained in any compact set) and using, in the computation of $g$--length of $\gamma$, the passage from $g$ to $g_{0}$, the asymptotic flatness of $(M,g_{0})$ and the asymptotic expansion of $u$ in~\eqref{eq11} we obtain that $\gamma$ has infinity length with respect to $g$.
Hence $\gamma$ has finite length with respect to $g_{0}$ and, being $g_{0}$ complete, it follows that $g$ is complete (see~\cite[Section~1.1]{Pigola} and~\cite{Dirmeier}).
We also recall
that the metric $g$ is {\em asymptotically cylindrical} (see~\cite[Section~ 3.1]{Virginia1}).
The other main element of the conformal setting is the $C^{\infty}$--function $\varphi$, defined in~\eqref{eq50}. Now, the reverse changes are
$$g_{0}=\Big(\cosh\frac{\varphi}{2}\Big)^{\frac{4}{n-2}}g\,,
\quad\quad\quad u=\tanh \frac{\varphi}{2}\,.$$
Recalling that we denote by the symbols $\nabla$ and $\Delta_{g}$ the Levi--Civita connection and the Laplace--Beltrami operator of $g$, by the formulas in~\cite[Theorem~1.159]{Besse}, we obtain
\begin{align}
\mathrm{D}u&=\frac{1}{2}\Big(\cosh \frac{\varphi}{2}\Big)^{-\frac{2n}{n-2}}\,\,\nabla \varphi\,,\label{eq51}\\
\mathrm{D}^{2}u&=\frac{1}{2}\, \frac{1}{\cosh^{2}\frac{\varphi}{2}}\,\nabla^{2}\varphi- \frac{n}{2(n-2)}\,\frac{\sinh\frac{\varphi}{2}}{\cosh^{3}\frac{\varphi}{2}}\, d\varphi \otimes d\varphi+\frac{1}{2(n-2)}\,\frac{\sinh\frac{\varphi}{2}}{\cosh^{3}\frac{\varphi}{2}}\,\vert \nabla \varphi \vert^{2}_{g}\,g\,,\label{eq53}\\
\Delta u&=\frac{1}{2}\Big(\cosh \frac{\varphi}{2}\Big)^{-\frac{2n}{n-2}}\Delta_{g}\varphi\,,\label{eq54}\\
\mathrm{Ric}&=\mathrm{Ric}_{g}-\tanh\Big(\frac{\varphi}{2}\Big)\,\nabla ^{2}\varphi+\Big[\frac{1}{n-2}\tanh^{2}\Big( \frac{\varphi}{2}\Big)-\frac{1}{2}\frac{1}{\cosh^{2}\frac{\varphi}{2}}\Big]d\varphi \otimes d\varphi \nonumber\\
&-\frac{1}{(n-2)}\,\Big[\,\frac{1}{2}\,\frac{1}{\cosh^{2}\frac{\varphi}{2}}+\tanh^{2}\Big( \frac{\varphi}{2}\Big)\Big]\vert \nabla \varphi \vert^{2}_{g}\,g\,.
\end{align}
Translating system~\eqref{f0} in terms of $g$ and $\varphi$, we get system~\eqref{sistconf}.
Moreover, on $\{\varphi=s\}\setminus\mathrm{Crit}(\varphi)$ we consider the $\infty$--pointing normal unit vector fields 
\begin{align}
\nu=\frac{ \mathrm{D}u }{\vert \mathrm{D}u \vert}\,, \quad \quad \quad \nu_{g}=\frac{\nabla \varphi}{\vert \nabla \varphi \vert_{g}}\,,
\end{align}
the mean curvatures 
\begin{align}
\mathrm{H}=-\, \frac{\mathrm{D}^{2}u(\mathrm{D}u,\mathrm{D}u)}{\vert \mathrm{D}u \vert^{3}}\,,\quad \quad \quad \mathrm{H}_{g}=-\,\frac{\nabla ^{2}\varphi(\nabla \varphi,\nabla \varphi)}{\vert \nabla \varphi \vert_{g}^{3}}\,,\,\label{H}
\end{align}
and the second fundamental forms 
\begin{align}
\mathrm{h}(X,Y)=\frac{\mathrm{D}^{2}u\,(X,Y)}{\vert \mathrm{D}u \vert}\,,\quad\quad\quad \mathrm{h}_{g}(X,Y)=\frac{\nabla ^{2}\varphi\,(X,Y)}{\vert \nabla \varphi \vert_{g}}\,,
\end{align}
for any $X,Y$ tangent vector fields to the considered submanifold.

\noindent Reversing formulas~\eqref{eq51},~\eqref{eq53} and~\eqref{eq54}, we get
\begin{align}
\nabla \varphi&=\frac{2}{(1-u^{2})^{\frac{n}{n-2}}}\, \mathrm{D}u\,,\\
\nabla ^{2}\varphi&=\frac{2}{1-u^{2}}\,\Big[\,\mathrm{D}^{2}u\,+\, \frac{n}{n-2}\,\frac{2u}{1-u^{2}}\, du\otimes du\,-\,\frac{1}{n-2}\,\frac{2u}{1-u^{2}}\,\vert \mathrm{D}u \vert^{2}\,g_{0}\,\Big]\,,\\
\vert \nabla^{2} \varphi \vert_{g}^{2}&=\frac{4}{(1-u^{2})^{\frac{2n}{n-2}}}\,\vert \mathrm{D}^{2}u \vert^{2}+\frac{16n}{n-2}\,\frac{u}{(1-u^{2})^{\frac{3n-2}{n-2}}}\,\mathrm{D}^{2}u(\mathrm{D}u,\mathrm{D}u)+\frac{16n(n-1)}{(n-2)^{2}}\,
\frac{u^{2}}{(1-u^{2})^{4\big(\frac{n-1}{n-2}\big)}}\,\vert \mathrm{D}u \vert^{4}\,.
\end{align}
These equalities, jointly with the asymptotic flatness of $(M,g_{0})$ and the asymptotic expansion of $u$ given in Section~\ref{prel}, allow us to obtain an upper bound for the functions $\vert \nabla \varphi \vert_{g}$ and $\vert \nabla^{2} \varphi \vert_{g}$, and for the $g$--areas of the level sets of $\varphi$ sufficiently "close" to infinity. This is the content of the following lemma.

\begin{lemma}\label{lemmabound}
There exists $0\leq s_{0}<+\infty$ such that
\begin{align}\label{boundutili}
\sup_{M}\vert \nabla \varphi \vert_{g}\,+\,\sup_{M}\vert \nabla^{2} \varphi \vert_{g}\,+\,\sup_{s\geq s_{0}}\int\limits_{\{\varphi=s\}}d\sigma_{g}<\,+\infty\,.
\end{align}
\end{lemma}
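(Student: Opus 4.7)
The strategy is to split $M$ into a compact part (where smoothness of $u$ and the fact that $u<1$ give automatic bounds) and a neighbourhood of infinity (where we use the asymptotic expansions from Section~\ref{prel}). Since $u\in C^\infty(M)$, with $u=0$ on $\partial M$, the functions $|\nabla\varphi|_g$ and $|\nabla^2\varphi|_g$ are continuous on $M$, as one sees from the reverse conformal formulas expressing them in terms of $u$, $\mathrm{D}u$, $\mathrm{D}^2 u$ and $(1-u^2)^{-1}$. Hence on any compact $K\subset M$ with $\sup_K u<1$, both quantities are bounded.

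To handle the end, I would work in a chart at infinity $\psi$ and combine the decay estimates \eqref{eq5}--\eqref{eq8} (together with \eqref{eq37app}) for $\widetilde g_0$ with the expansions \eqref{eq11}--\eqref{eq13} for $\widetilde u$. Substituting these into
\[
|\nabla\varphi|_g^2\,=\,4|\mathrm{D}u|^2\,(1-u^2)^{-2(n-1)/(n-2)},
\]
the leading behaviour $|\mathrm{D}u|^2\sim(n-2)^2\mathcal{C}^2|x|^{-2(n-1)}$ together with $1-u^2\sim 2\mathcal{C}|x|^{2-n}$ produces the finite limit
\[
|\nabla\varphi|_g^2\,\longrightarrow\,(2\mathcal{C})^{-2/(n-2)}(n-2)^2\quad\text{as }|x|\to\infty,
\]
which both establishes the limit in~$(\star)$ and, together with continuity, yields $\sup_M|\nabla\varphi|_g<\infty$.

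For the Hessian, the plan is to apply the $g$-norm $|\cdot|_g=(1-u^2)^{-2/(n-2)}|\cdot|_{g_0}$ to each of the three summands of $\nabla^2\varphi$ displayed in the formula
\[
\nabla^2\varphi\,=\,\frac{2}{1-u^2}\Bigl[\mathrm{D}^2u+\tfrac{n}{n-2}\tfrac{2u}{1-u^2}\,du\otimes du-\tfrac{1}{n-2}\tfrac{2u}{1-u^2}|\mathrm{D}u|^2 g_0\Bigr].
\]
By~\eqref{eq13} one has $|\mathrm{D}^2u|_{g_0}=O(|x|^{-n})$ (the Christoffel corrections in passing from Euclidean to $g_0$-Hessian decay faster than the leading term), while $|du\otimes du|_{g_0}=O(|x|^{-2(n-1)})$ and $||\mathrm{D}u|^2g_0|_{g_0}=O(|x|^{-2(n-1)})$. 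Multiplying by the appropriate powers of $(1-u^2)^{-1}\sim(2\mathcal{C})^{-1}|x|^{n-2}$ and the overall factor $(1-u^2)^{-2/(n-2)}\sim(2\mathcal{C})^{-2/(n-2)}|x|^2$ gives that each summand is of order $|x|^0$ in $g$-norm. Hence $|\nabla^2\varphi|_g$ is bounded at infinity, and globally on $M$ by continuity.

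Finally, for the uniform $g$-area bound, I would choose $s_0$ so large that, setting $t_0=\tanh(s_0/2)$, the sublevel set $\{u\ge t_0\}$ lies inside the asymptotic region and is foliated by regular level sets diffeomorphic to $\SSS^{n-1}$, as guaranteed by Remark~\ref{disposizionelevelset}. Using $g=(1-u^2)^{2/(n-2)}g_0$ and the constancy of $u$ on $\{u=t\}$, one has
\[
\int_{\{\varphi=s\}}d\sigma_g\,=\,(1-t^2)^{(n-1)/(n-2)}\int_{\{u=t\}}d\sigma_{g_0},\quad t=\tanh(s/2).
\]
The asymptotic expansions identify $\{u=t\}$ with a near-round Euclidean sphere of radius $R(t)\sim(\mathcal{C}/(1-t))^{1/(n-2)}$, whose $g_0$-area, via~\eqref{eq41app}, is $(1+o(1))|\SSS^{n-1}|R(t)^{n-1}$; combined with the prefactor $(1-t^2)^{(n-1)/(n-2)}\sim(2(1-t))^{(n-1)/(n-2)}$ the powers of $(1-t)$ cancel exactly, producing the uniform bound $(2\mathcal{C})^{(n-1)/(n-2)}|\SSS^{n-1}|(1+o(1))$. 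The main technical care is in the second paragraph, where one must track several competing decay rates in the Hessian formula and verify that the $g$-conformal factor exactly compensates them.
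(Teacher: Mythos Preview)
Your treatment of $\sup_M|\nabla\varphi|_g$ and $\sup_M|\nabla^2\varphi|_g$ matches the paper's proof: compute the asymptotic limits using \eqref{eq11}--\eqref{eq13} and \eqref{eq37app}, then invoke continuity on compact sets. (The paper actually substitutes into the closed formula for $|\nabla^2\varphi|_g^2$ given just before the lemma rather than bounding the three summands separately, but this is cosmetic.)

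For the $g$-area bound, however, you take a different route from the paper, and your version has a gap. You invoke \eqref{eq41app} to estimate the $g_0$-area of $\{u=t\}$, but \eqref{eq41app} compares $d\sigma_{\widetilde h}$ and $d\sigma_e$ on the coordinate sphere $\partial B_R$, not on a level set of $u$. To use it you would need to show that $\{\widetilde u=t\}$ is a graph over some $\partial B_{R(t)}$ with controlled $C^1$-norm, which is plausible from \eqref{eq12} but not stated anywhere in the paper and requires its own argument.

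The paper avoids this entirely with a slicker trick that uses the PDE rather than the geometry. Since $|\nabla\varphi|_g^2 = 4|\mathrm{D}u|^2(1-u^2)^{-2(n-1)/(n-2)}$ tends to a positive limit, one gets $(1-u^2)^{(n-1)/(n-2)} \leq L\,|\mathrm{D}u|$ on $\{\varphi\geq s_0\}$ for some $L>0$. Then
\[
\int_{\{\varphi=s\}} d\sigma_g \;=\; \int_{\{u=\tanh(s/2)\}} (1-u^2)^{\frac{n-1}{n-2}}\,d\sigma_{g_0}
\;\leq\; L\int_{\{u=\tanh(s/2)\}} |\mathrm{D}u|\,d\sigma_{g_0}
\;=\; L\int_{\partial M} |\mathrm{D}u|\,d\sigma_{g_0},
\]
the last equality by harmonicity of $u$ and the Divergence Theorem. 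This bypasses any need to describe the level sets geometrically.
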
 
\begin{proof}
Let $\psi$ be a chart at infinity. Considering $\widetilde{g}_{0}=\psi_{*}g_{0}=\widetilde{g}_{0;ij}dx^{i}\otimes dx^{j}$, by formulas~\eqref{eq37app} and~\eqref{eq12}, the coordinate expression of $$\vert \nabla \varphi \vert_{g}^{2}= \frac{4\,\vert \mathrm{D}u \vert^{2}}{(1-u^{2})^{2\frac{n-1}{n-2}}}$$
is
\begin{align}
\psi_{*}\vert \nabla \varphi \vert_{g}^{2}&= \frac{4\,\psi_{*}\vert \mathrm{D}u \vert^{2}}{(1-\widetilde{u}^{2})^{2\frac{n-1}{n-2}}}=\frac{4\,\widetilde{g}_{0}^{ij}\,\partial_{i}\widetilde{u}\,\partial_{j}\widetilde{u} }{\bigl\{1-[1-\mathcal{C}\vert x\vert^{2-n}+o(\vert x\vert^{2-n})]^{2}\bigl\}^{2\,(\frac{n-1}{n-2})}}\\
&=\frac{4\,\big[ \delta^{ij}+O\big(\vert x\vert^{-p}\big)\big]\,\big[(n-2)^{2}\,\mathcal{C}^{2}\,\vert x\vert^{-2n}\,x^{i}\,x^{j}+o(\vert x\vert^{2-2n})\big]}{\big[ 2\,\mathcal{C}\,\vert x\vert^{2-n}+o(\vert x\vert^{2-n})\big]^{2\,(\frac{n-1}{n-2})}}\\
&=\frac{4\,(n-2)^{2}\,\mathcal{C}^{2}\,\vert x\vert^{2-2n}+o(\vert x\vert^{2-2n})}{\big(2\,\mathcal{C}\big)^{2\,(\frac{n-1}{n-2})}\, \vert x\vert^{2-2n}\,\big(1+o(1)\big)}=\frac{4\,(n-2)^{2}\,\mathcal{C}^{2}+o(1)}{\big(2\,\mathcal{C}\big)^{2\,(\frac{n-1}{n-2})}\,\big(1+o(1)\big)}\,.
\end{align}
Hence 
\begin{equation}\label{eq60}
\vert \nabla \varphi \vert_{g}^{2}\longrightarrow \frac{4\,(n-2)^{2}\,\mathcal{C}^{2}}{\big(2\,\mathcal{C}\big)^{2\,(\frac{n-1}{n-2})}}=\big( 2\,\mathcal{C}\big)^{-\frac{2}{n-2}}(n-2)^{2}\,\,\text{at}\, \,\infty\,.
\end{equation}
Moreover, by limit~\eqref{eq60} there exist a constant $L>0$ and a value $s_{0}>0$ of $\varphi$ such that every $s\geq s_{0}$ is a regular value of $\varphi$ and $(1-u^{2})^{\frac{n-1}{n-2}}\leq L\,\,\vert \mathrm{D}u \vert$ on $\{\varphi\geq s_{0}\}$. Then
\begin{align}
\int\limits_{\{\varphi=s\}}d\sigma_{g}=\int\limits_{\{u=\tanh\frac{s}{2}\}}(1-u^{2})^{\frac{n-1}{n-2}}\,d\sigma\leq L\int\limits_{\{u=\tanh\frac{s}{2}\}}\vert \mathrm{D}u \vert\,d\sigma=L\int\limits_{\partial M}\vert \mathrm{D}u \vert\,d\sigma\,,
\end{align}
where in the last equality we have applied the Divergence Theorem. Consequently, we have that $\sup_{s\geq s_{0}}\int\limits_{\{\varphi=s\}}d\sigma_{g}< +\infty$. 
Similarly, we have that
\begin{align*}
\psi_{*}\vert \mathrm{D}^{2}u \vert^{2}\!&=\!\widetilde{g}_{0}^{\,i_{1}i_{2}}\widetilde{g}_{0}^{\,j_{1}j_{2}}(\mathrm{D}_{\widetilde{g}_{0}}^{2}\widetilde{u})_{i_{1}j_{1}}\!(\mathrm{D}_{\widetilde{g}_{0}}^{2}\widetilde{u})_{i_{2}j_{2}}\!=\!\widetilde{g}_{0}^{\,i_{1}i_{2}}\widetilde{g}_{0}^{\,j_{1}j_{2}}\big[\partial _{i_{1}}\partial_{j_{1}}\widetilde{u}-\Gamma_{\widetilde{g}_{0};i_{1}j_{1}}^{k_{1}}\partial _{k_{1}}\widetilde{u}\big]\!\big[\partial _{i_{2}}\partial_{j_{2}}\widetilde{u}-\Gamma_{\widetilde{g}_{0};i_{2}j_{2}}^{k_{2}}\partial _{k_{2}}\widetilde{u}\big]\\
&=[ \delta^{\,i_{1}i_{2}}\delta^{\,j_{1}j_{2}}+O\big(\vert x\vert^{-p}\big)\big]\,\big[\partial _{i_{1}}\partial_{j_{1}}\widetilde{u}-O(\vert x\vert^{-(p+n)}\big]\,
\big[\partial _{i_{2}}\partial_{j_{2}}\widetilde{u}-O(\vert x\vert^{-(p+n)}) \big]\\
&=\big[(n-1)(n-2)\mathcal{C}\big]^{2}\vert x\vert^{-2n}+o(\vert x\vert^{-2n})
\end{align*}
due to formulas~\eqref{eq37app},~\eqref{eq13} and~\eqref{eq46}. Moreover,
\begin{align}
\psi_{*}\mathrm{D}^{2}u(\mathrm{D}u,\mathrm{D}u)&=\mathrm{D}_{\widetilde{g}_{0}}^{2}\widetilde{u}(\mathrm{D}_{\widetilde{g}_{0}}\widetilde{u},\mathrm{D}_{\widetilde{g}_{0}}\widetilde{u})=\widetilde{g}_{0}^{\,ir}\,\widetilde{g}_{0}^{\,js}\,\partial_{r}\widetilde{u}\,\partial_{s}\widetilde{u}\,\big(\partial _{i}\partial_{j}\widetilde{u}-\Gamma_{\widetilde{g}_{0};ij}^{k}\partial _{k}\widetilde{u})\,\\
&=-(n-1)\,(n-2)^{3}\,\mathcal{C}^{3}\vert x\vert^{2-3n}+o(\vert x\vert^{2-3n})\,.
\end{align}
All in all,
\begin{align}
\psi_{*}\vert \nabla^{2} \varphi \vert_{g}^{2}&=\frac{4}{(1-\widetilde{u}^{2})^{\frac{2n}{n-2}}}\,\psi_{*}\vert \mathrm{D}^{2}\widetilde{u} \vert^{2}+\frac{16n}{n-2}\,\frac{\widetilde{u}}{(1-\widetilde{u}^{2})^{\frac{3n-2}{n-2}}}\,\psi_{*}\mathrm{D}^{2}\widetilde{u}(\mathrm{D}\widetilde{u},\mathrm{D}\widetilde{u})\\
&+\frac{16n(n-1)}{(n-2)^{2}}\,\frac{\widetilde{u}^{2}}{(1-\widetilde{u}^{2})^{4\big(\frac{n-1}{n-2}\big)}}\,\psi_{*}\vert \mathrm{D}\widetilde{u} \vert^{4}\\
&=(n-1)\,(n-2)^{2}\,(2\,\mathcal{C})^{-\frac{4}{n-2}}\,\Bigr\{\frac{n-1+o(1)}{1+o(1)}-\frac{2\,n\,\widetilde{u}+o(1)}{1+o(1)}
+\frac{16\,n\,\widetilde{u}^{2}+o(1)}{1+o(1)}\Bigr\}\,,
\end{align}
which gives
\begin{align}
\vert \nabla^{2} \varphi \vert_{g}^{2}\longrightarrow (n-1)\,(15n-1)\,(n-2)^{2}\,(2\,\mathcal{C})^{-\frac{4}{n-2}}\quad\,\,\text{at}\, \,\infty\,.
\end{align}
In particular, since $\varphi$ is smooth, we have that
$$\sup_{M}\vert \nabla \varphi \vert_{g}\,+\,\sup_{M}\vert \nabla^{2} \varphi \vert_{g}\,< \,+\infty\,.$$
\end{proof}

\begin{remark}
Note that
$\sup_{s\geq 0}\int\limits_{\{\varphi=s\}}d\sigma_{g}\in(0,+\infty]$,
since we cannot a priori exclude that there exist a critical value $\overline{s}>0$ and a sequence $\{s_{m}\}\subset (0,+\infty)$ such that $s_{m}\to\overline{s}$ and $$\int\limits_{\{\varphi=s_{m}\}}d\sigma_{g}\to +\infty\,.$$
\end{remark}

\noindent 
As it will be clear in the proof of the integral identity~\eqref{partialfirstintegralidentities2}, which is at the core of the conformal--monotonicity result (Proposition~\ref{Conforme Monotonicity theorem}), it is useful to introduce a suitable vector field with nonnegative divergence. To do this, let us focus on the set $\mathring{M}\setminus\mathrm{Crit}(\varphi)$ and notice first that the classical Bochner formula, applied to the $g$--harmonic function $\varphi$, becomes
\begin{align}
\frac{1}{2}\,\Delta_{g}\vert \nabla \varphi \vert^{2}_{g}\,=\,\vert \nabla^{2} \varphi \vert_{g}^{2}\,+\,\mathrm{Ric}_{g}(\nabla \varphi ,\nabla \varphi )+\,g(\nabla\Delta_{g}\varphi,\nabla \varphi )\,=\,\vert \nabla^{2} \varphi \vert_{g}^{2}\,+\,\mathrm{Ric}_{g}(\nabla \varphi ,\nabla \varphi )\,.\label{eq55}
\end{align}
Then, we obtain
\begin{align}
\Delta_{g}\vert \nabla \varphi \vert^{\beta}_{g}&=\mathrm{div}_{g}\Big(\nabla \vert \nabla \varphi \vert^{\beta}_{g}\Big)=\mathrm{div}_{g}\Big(\frac{\beta}{2}\vert \nabla \varphi \vert^{\beta-2}_{g}\nabla\vert \nabla \varphi \vert^{2}_{g}\Big)\\
&=\frac{\beta}{2}\Big[g(\nabla \vert \nabla \varphi \vert^{\beta-2}_{g},\nabla\vert \nabla \varphi \vert^{2}_{g})+ \vert \nabla \varphi \vert^{\beta-2}_{g}\Delta_{g}\vert \nabla \varphi \vert^{2}_{g} \Big]\\
&=\beta\,\vert\nabla \varphi \vert^{\beta-2}_{g}\Big[\,(\beta-2)\,\Big\vert\nabla\vert \nabla \varphi \vert_{g}\,\Big\vert_{g}^{2}+\vert \nabla^{2} \varphi \vert_{g}^{2}+\mathrm{Ric}_{g}(\nabla \varphi ,\nabla \varphi )\Big]\,, \label{eq56}
\end{align}
where in the third equality we have used~\eqref{eq55}. Now, observe from the nonnegativity of the tensor
\begin{equation}
\label{definizione_Q}
Q\,:=\,\mathrm{Ric}_{g}-\coth(\varphi)\,\nabla ^{2}\varphi+\frac{1}{n-2}\,d\varphi \otimes d\varphi-\frac{1}{n-2}\,\vert \nabla \varphi \vert^{2}_{g}\,\ g
\end{equation}
(see~\eqref{sistconf}) that 
\begin{align}\label{Q}
Q( \nabla \varphi, \nabla \varphi)=\mathrm{Ric}_{g}( \nabla \varphi, \nabla \varphi)-\coth(\varphi)\,\nabla ^{2}\varphi( \nabla \varphi, \nabla \varphi)\geq 0\,.
\end{align}
Therefore, by adding and subtracting the term $\beta\,\vert\nabla \varphi \vert^{\beta-2}_{g}\,\coth(\varphi)\,\nabla ^{2}\varphi( \nabla \varphi, \nabla \varphi)$
on the right--hand side of~\eqref{eq56}, we get
\begin{align}
\Delta_{g}\vert \nabla \varphi \vert^{\beta}_{g}-\beta\,\vert\nabla \varphi \vert^{\beta-2}_{g}&\,\coth(\varphi)\,\nabla ^{2}\varphi( \nabla \varphi, \nabla \varphi)\\
&=\beta\vert \nabla \varphi \vert^{\beta-2}_{g}\Big[\,(\beta-2)\Big\vert\nabla\vert \nabla \varphi \vert_{g}\,\Big\vert_{g}^{2}+\vert \nabla^{2} \varphi \vert_{g}^{2}+Q(\nabla \varphi ,\nabla \varphi )\,\Big]\,.\label{eq57}
\end{align}
Since
$$\beta\,\vert\nabla \varphi \vert^{\beta-2}_{g}\,\coth(\varphi)\,\nabla ^{2}\varphi( \nabla \varphi, \nabla \varphi)
=\coth(\varphi)\, g(\nabla\vert \nabla \varphi \vert_{g}^{\beta}, \nabla \varphi)$$
and since, setting
\begin{align}\label{eq61}
Y_{\beta}:=\frac{\nabla\, \vert \nabla \varphi \vert^{\beta}_{g}}{\sinh \varphi}\,, 
\end{align}
there holds
$$\mathrm{div}_{g}\,Y_{\beta}=\frac{\Delta_{g}\vert \nabla \varphi \vert^{\beta}_{g}}{\sinh \varphi}-\frac{\cosh \varphi}{\sinh^2 \varphi}\, \,g(\nabla\vert \nabla \varphi \vert_{g}^{\beta}, \nabla \varphi)\,,$$
from~\eqref{eq57} we get
\begin{align}
\sinh (\varphi)\, \mathrm{div}_{g}\,Y_{\beta}=\beta\vert \nabla \varphi \vert^{\beta-2}_{g}\Big[\,(\beta-2)\Big\vert\nabla\vert \nabla \varphi \vert_{g}\,\Big\vert_{g}^{2}+\vert \nabla^{2} \varphi \vert_{g}^{2}+Q(\nabla \varphi ,\nabla \varphi )\,\Big]\,.
\end{align}
Note that, by the refined Kato inequality for harmonic function 
\begin{align}\label{Katoineqharmfunct}
\frac{n}{n-1}\Big\vert\nabla\vert \nabla \varphi \vert_{g}\,\Big\vert_{g}^{2}\leq \vert \nabla^{2} \varphi \vert_{g}^{2}\,,
\end{align}
we have that
\begin{align}
(\beta-2)\Big\vert\nabla\vert \nabla \varphi \vert_{g}\,\Big\vert_{g}^{2}&+\vert \nabla^{2} \varphi \vert_{g}^{2}+Q(\nabla \varphi ,\nabla \varphi )\\
&=\Big(\beta-\frac{n-2}{n-1}\Big)\Big\vert\nabla\vert \nabla \varphi \vert_{g}\,\Big\vert_{g}^{2}+\Bigg[\vert \nabla^{2} \varphi \vert_{g}^{2}-\frac{n}{n-1}\Big\vert\nabla\vert \nabla \varphi \vert_{g}\,\Big\vert_{g}^{2}\Bigg]+Q(\nabla \varphi ,\nabla \varphi )\geq0\,,\,\label{eq65}
\end{align}
whenever $\beta\geq\frac{n-2}{n-1}$. Hence, $\mathrm{div}_{g}\,Y_{\beta}\geq 0$ for every $\beta\geq\frac{n-2}{n-1}$. 
This fact will be heavily used in the proof of the forthcoming results. It will also be useful to have a precise estimate of $\int\limits_{\{\vert \nabla \varphi \vert=\delta\}}\big\vert\,\nabla\vert \nabla \varphi \vert_{g}\,\,\big\vert_{g}\,d\sigma_{g}$ in terms of a suitable power of $\delta$, close to $\mathrm{Crit}(\varphi)$, that is when $\delta\to 0^{+}$. This is the content of the following lemma.

\begin{lemma}\label{lemmatecnico}
There exists $\delta_{0}>0$ such that 
\begin{align}\label{disutillemma}
\sup\Bigg\{\delta^{-\frac{1}{n-1}}\int\limits_{\{\vert \nabla \varphi \vert=\delta\}}\big\vert\,\nabla\vert \nabla \varphi \vert_{g}\,\,\big\vert_{g}\,d\sigma_{g}\,:\, 0<\delta<\delta_{0}\,\,\text{regular value of}\,\,\vert \nabla \varphi \vert_{g}\,\Bigg\}<+\infty\,.
\end{align}
\end{lemma}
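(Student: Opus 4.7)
The plan is to exploit the divergence structure of the vector field $Y_{\beta}$ defined in~\eqref{eq61} at the critical threshold $\beta_{0}:=(n-2)/(n-1)$. At this specific value, the refined Kato inequality~\eqref{Katoineqharmfunct} applied to identity~\eqref{eq57} is absorbed exactly: the coefficient $(\beta-2)+n/(n-1)$ vanishes when $\beta=\beta_{0}$, so combined with the nonnegativity of $Q(\nabla\varphi,\nabla\varphi)$ from~\eqref{Q} this forces $\mathrm{div}_{g}\,Y_{\beta_{0}}\geq 0$ on $\mathring{M}\setminus\mathrm{Crit}(\varphi)$, exactly as observed in~\eqref{eq65}.

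First I would pick $\delta_{0}>0$ small enough that $K_{\delta_{0}}:=\{\vert\nabla\varphi\vert_{g}\leq\delta_{0}\}$ is a compact subset of $\mathring{M}$ on which $\sinh\varphi$ is pinched between two positive constants $c_{1}$ and $c_{2}$. Such a $\delta_{0}$ exists because $\mathrm{Crit}(\varphi)$ is compact by Theorem~\ref{geometryoflevelset}, $\vert\nabla\varphi\vert_{g}>0$ on $\partial M$ by the Hopf Lemma, and $\vert\nabla\varphi\vert_{g}$ tends to a positive constant at infinity by the limit in~$(\star)$. Moreover, applying Sard's theorem to the smooth function $\vert\nabla\varphi\vert_{g}$ on $\mathring{M}\setminus\mathrm{Crit}(\varphi)$, I may further assume that $\delta_{0}$ is itself a regular value of $\vert\nabla\varphi\vert_{g}$.

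For every regular value $\delta\in(0,\delta_{0})$, the closure $\overline{\Omega_{\delta}}:=\{\delta\leq\vert\nabla\varphi\vert_{g}\leq\delta_{0}\}$ is a compact domain contained in $\mathring{M}\setminus\mathrm{Crit}(\varphi)$ with smooth boundary $\{\vert\nabla\varphi\vert_{g}=\delta\}\sqcup\{\vert\nabla\varphi\vert_{g}=\delta_{0}\}$. I would then apply the divergence theorem to $Y_{\beta_{0}}$ on $\overline{\Omega_{\delta}}$: computing the boundary integrals through the outward unit normals $\pm\nabla\vert\nabla\varphi\vert_{g}/\vert\nabla\vert\nabla\varphi\vert_{g}\vert_{g}$ (outward on the outer component, inward on the inner one), the nonnegativity of the divergence translates into
\begin{equation*}
\delta^{\beta_{0}-1}\int\limits_{\{\vert\nabla\varphi\vert_{g}=\delta\}}\frac{\vert\nabla\vert\nabla\varphi\vert_{g}\vert_{g}}{\sinh\varphi}\,d\sigma_{g}\;\leq\;\delta_{0}^{\beta_{0}-1}\int\limits_{\{\vert\nabla\varphi\vert_{g}=\delta_{0}\}}\frac{\vert\nabla\vert\nabla\varphi\vert_{g}\vert_{g}}{\sinh\varphi}\,d\sigma_{g}.
\end{equation*}
Observing that $\beta_{0}-1=-1/(n-1)$ and using the pinching of $\sinh\varphi$ on $K_{\delta_{0}}$, this immediately yields~\eqref{disutillemma}: the right-hand side is a finite constant depending only on $\delta_{0}$, since $\delta_{0}$ is a regular value and $\vert\nabla\vert\nabla\varphi\vert_{g}\vert_{g}\leq\vert\nabla^{2}\varphi\vert_{g}$ is globally bounded by Lemma~\ref{lemmabound}.

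The only subtle point is the algebraic matching at $\beta_{0}=(n-2)/(n-1)$: the refined Kato inequality is precisely sharp enough to make $\mathrm{div}_{g}\,Y_{\beta_{0}}\geq 0$, and simultaneously the weight $\delta^{\beta_{0}-1}$ produced automatically by the divergence computation coincides with $\delta^{-1/(n-1)}$, matching exactly the exponent appearing in the statement. This double coincidence also explains the optimality of the threshold $\beta_{0}$. The rest of the argument is essentially routine, relying only on standard structural facts about $\varphi$ recalled in Section~\ref{prel}.
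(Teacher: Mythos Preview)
Your proof is correct and follows essentially the same approach as the paper's: both apply the divergence theorem to (a constant multiple of) $Y_{\beta_{0}}$ with $\beta_{0}=(n-2)/(n-1)$ on the annular region between two sublevel sets of $\vert\nabla\varphi\vert_{g}$, and both rely on the nonnegativity of $\mathrm{div}_{g}Y_{\beta_{0}}$ coming from the refined Kato inequality together with the pinching of $\sinh\varphi$ on the compact set containing $\mathrm{Crit}(\varphi)$. The only cosmetic differences are that the paper works with the smooth function $\vert\nabla\varphi\vert_{g}^{2}$ rather than $\vert\nabla\varphi\vert_{g}$ (which sidesteps the nonsmoothness at $\mathrm{Crit}(\varphi)$ when invoking Sard), and uses the rescaled field $Z=\tfrac{2(n-1)}{n-2}Y_{\beta_{0}}$.
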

\medskip

\noindent
We recall that the set of the critical values of $\vert \nabla \varphi \vert^{2}_{g}$ has zero Lebesgue measure by Sard's Theorem,
whereas we have no information regarding the 
local $\mathcal{H}$--dimension of 
$\mathrm{Crit}(\vert \nabla \varphi \vert^{2}_{g})$.
\begin{proof}
Applying Sard's Theorem to the smooth function $\vert \nabla \varphi \vert^{2}_{g}$ there exists $\varepsilon_{0}>0$ such that $\varepsilon_{0}$ is a regular value of $\vert \nabla \varphi \vert^{2}_{g}$ and 
$$\varepsilon_{0}<\min\Big\{\min_{\{\varphi=0\}}\vert \nabla \varphi \vert^{2}_{g},\text{ the limit of $\vert \nabla \varphi \vert^{2}_{g}$ at $\infty$ }\Big\}\,, 
$$
\noindent
where the limit in the previous expression is the (finite and positive) value computed in~\eqref{eq60}. 
In particular, $\{\vert \nabla \varphi \vert^{2}_{g}\leq \varepsilon_{0}\}$ is compactly contained in $\mathring{M}$, and for every $0<\delta<\delta_{0}$ regular value of $\vert \nabla \varphi \vert_{g}$ we have that
\begin{align}
\delta^{-\frac{1}{n-1}}\int\limits_{\{\vert \nabla \varphi \vert=\delta\}}\big\vert\,\nabla\vert \nabla \varphi \vert_{g}\,\,\big\vert_{g}\,d\sigma_{g}&=\int\limits_{\{\vert \nabla \varphi \vert=\delta\}}\sinh(\varphi)\,
 \frac{\vert \nabla \varphi \vert_{g}^{-\frac{1}{n-1}}\big\vert\,\nabla\vert \nabla \varphi \vert_{g}\,\,\big\vert_{g}}{\sinh(\varphi)}\,d\sigma_{g}\\
&\leq c\,\int\limits_{\{\vert \nabla \varphi \vert=\delta\}}\frac{\vert \nabla \varphi \vert_{g}^{-\frac{1}{n-1}}\big\vert\,\nabla\vert \nabla \varphi \vert_{g}\,\,\big\vert_{g}}{\sinh(\varphi)}\,d\sigma_{g}
\end{align}
Now, consider the smooth vector field
\begin{align}
Z:=2\,\frac{n-1}{n-2} \,Y_{\frac{n-2}{n-1}}= \frac{1}{\sinh \varphi}\,\, \frac{\nabla\vert \nabla \varphi \vert^{2}_{g}}{\quad\vert \nabla \varphi \vert^{\frac{n}{n-1}}_{g}}\,,
\end{align}
with $\mathrm{div}_{g}Z\geq0$.
Set 
\begin{align}\label{eq62}
U_{\mu}:=\{\vert \nabla \varphi \vert^{2}_{g}<\mu\}\quad \text{for every $\mu>0$}\,.
\end{align}
Then, for every $0<\varepsilon<\varepsilon_{0}$ regular value of the function $\vert \nabla \varphi \vert^{2}_{g}$\,, we apply the Divergence Theorem to the smooth vector field $Z$ on $U_{\varepsilon_{0}} \setminus \overline{U_{\varepsilon}}$, and we get
\begin{align}
\int\limits_{\{\vert \nabla \varphi \vert^{2}_{g}=\varepsilon_{0}\}}\frac{1}{\sinh \varphi}\,\,\frac{\quad\big\vert\,\nabla\vert \nabla \varphi \vert^{2}_{g}\,\,\big\vert_{g}}{\quad\vert \nabla \varphi \vert^{\frac{n}{n-1}}_{g}}\,d\sigma_{g}&-\int\limits_{\{\vert \nabla \varphi \vert^{2}_{g}=\varepsilon\}}\frac{1}{\sinh \varphi}\,\,\frac{\quad\big\vert\,\nabla\vert \nabla \varphi \vert^{2}_{g}\,\,\big\vert_{g}}{\quad\vert \nabla \varphi \vert^{\frac{n}{n-1}}_{g}}\,d\sigma_{g}\\
&=\int\limits_{U_{\varepsilon_{0}} \setminus\overline{U_{\varepsilon}}}\mathrm{div}_{g}Z\,d\mu_{g}\geq 0\,.
\end{align}
Then, it follows
\begin{align}
\int\limits_{\{\vert \nabla \varphi \vert^{2}_{g}=\varepsilon_{0}\}}\frac{1}{\sinh \varphi}\,\,\frac{\quad\big\vert\,\nabla\vert \nabla \varphi \vert^{2}_{g}\,\,\big\vert_{g}}{\quad\vert \nabla \varphi \vert^{\frac{n}{n-1}}_{g}}\,d\sigma_{g}\geq\int\limits_{\{\vert \nabla \varphi \vert^{2}_{g}=\varepsilon\}}\frac{1}{\sinh \varphi}\,\,\frac{\quad\big\vert\,\nabla\vert \nabla \varphi \vert^{2}_{g}\,\,\big\vert_{g}}{\quad\vert \nabla \varphi \vert^{\frac{n}{n-1}}_{g}}\,d\sigma_{g}\,.
\end{align}
Therefore, setting
$$c_{1}:=\int\limits_{\{\vert \nabla \varphi \vert^{2}_{g}=\varepsilon_{0}\}}\frac{1}{\sinh \varphi}\,\,\frac{\quad\big\vert\,\nabla\vert \nabla \varphi \vert^{2}_{g}\,\,\big\vert_{g}}{\quad\vert \nabla \varphi \vert^{\frac{n}{n-1}}_{g}}\,d\sigma_{g}>0\,,$$
we obtain 
\begin{align}\label{disutillemma1}
\frac{1}{\varepsilon^{\frac{1}{2}\frac{n}{n-1}}}\int\limits_{\{\vert \nabla \varphi \vert^{2}_{g}=\varepsilon\}}\frac{\quad\big\vert\,\nabla\vert \nabla \varphi \vert^{2}_{g}\,\,\big\vert_{g}}{\sinh \varphi}\,d\sigma_{g}\leq c_{1}\,.
\end{align}
Consequently, the desired statement follows keeping in mind that: if $\delta$ is a regular value of $\vert \nabla \varphi \vert_{g}$, then $\delta^{2}$ is a regular value of $\vert \nabla \varphi \vert_{g}^{2}$; in $M\setminus\mathrm{Crit}(\varphi)$ we have 
$\nabla\vert \nabla \varphi \vert^{2}_{g}=2\vert \nabla \varphi \vert_{g}\nabla\vert \nabla \varphi \vert_{g}$.
\end{proof}

\noindent
{\em We underline that from now on we will use Remark~\ref{remark2} widely}.
\smallskip

\noindent
The following proposition contains the integral identity which is the
main tool of our analysis.
\begin{proposition}\label{itegralidenty}
Let $(M,g_{0},u)$ be a sub-static harmonic triple, and let $g$ and $\varphi$ be the metric and the function defined in~\eqref{eq50}. 
Then, for every $\beta> \frac{n-2}{n-1}$ and for every $S>s>0$ regular values of $\varphi$, it holds
\begin{multline}
\label{partialfirstintegralidentities2}
\int\limits_{\{\varphi=s\}} \frac{\vert \nabla \varphi \vert^{\beta}_{g}\,\,\mathrm{H}_{g}}{\sinh\varphi}\,d\sigma_{g}-\int\limits_{\{\varphi=S\}} \frac{\vert \nabla \varphi \vert^{\beta}_{g}\,\,\mathrm{H}_{g}}{\sinh\varphi}\,d\sigma_{g}\\
=\int\limits_{\{s<\varphi<S\}}\frac{\vert \nabla \varphi \vert^{\beta-2}_{g}\,\Big[\,(\beta-2)\,\Big\vert\nabla\vert \nabla \varphi \vert_{g}\,\Big\vert_{g}^{2}+\vert \nabla^{2} \varphi \vert_{g}^{2}+Q(\nabla \varphi ,\nabla \varphi )\, \Big]}{\sinh\varphi}\,d\mu_{g}\,,
\end{multline}
where the tensor $Q$ is defined as in \eqref{definizione_Q}.
\end{proposition}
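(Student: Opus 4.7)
The identity \eqref{partialfirstintegralidentities2} is, at the formal level, nothing but the divergence theorem applied to the vector field $Y_\beta = \nabla|\nabla\varphi|_g^\beta/\sinh\varphi$ introduced in \eqref{eq61}, on the region $\{s<\varphi<S\}$. Indeed, on any regular level set the combination of $\nu_g = \nabla\varphi/|\nabla\varphi|_g$ with the expression $H_g = -\nabla^2\varphi(\nabla\varphi,\nabla\varphi)/|\nabla\varphi|_g^3$ immediately gives $g(Y_\beta,\nu_g) = -\beta\,|\nabla\varphi|_g^\beta\,H_g/\sinh\varphi$, while the identity for $\sinh(\varphi)\,\mathrm{div}_g Y_\beta$ derived in \eqref{eq56}--\eqref{eq57}, divided by $\beta$, produces exactly the claimed volume integrand. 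The substance of the proposition is in justifying this formal calculation even though critical points of $\varphi$ may lie in $\{s<\varphi<S\}$, where $Y_\beta$ need not be smooth and the volume integrand may \emph{a priori} blow up.

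The strategy is to excise a small neighbourhood of $\mathrm{Crit}(\varphi)$ and then pass to the limit. I would fix a sequence $\delta_k \downarrow 0$ of regular values of $|\nabla\varphi|_g$, provided by Sard's theorem, and apply the divergence theorem to $Y_\beta$ on
\[
M^{s,S}_{\delta_k} \,:=\, \{s<\varphi<S\}\setminus\overline{U_{\delta_k}}, \qquad U_\delta \,:=\, \{|\nabla\varphi|_g<\delta\}.
\]
Since $s$ and $S$ are regular values of $\varphi$, the compact level sets $\{\varphi=s\}, \{\varphi=S\}$ stay uniformly away from $\mathrm{Crit}(\varphi)$ and hence from $\overline{U_{\delta_k}}$ for all sufficiently small $k$, yielding the two desired mean-curvature contributions. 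The additional boundary term appears on $\partial U_{\delta_k}$ with outward normal $-\nabla|\nabla\varphi|_g/|\nabla|\nabla\varphi|_g|_g$.

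The main obstacle — and the place where the threshold $\beta > (n-2)/(n-1)$ enters — is to show that this extra boundary contribution vanishes as $\delta_k \to 0^+$. On $\{|\nabla\varphi|_g=\delta_k\}$ its integrand is bounded by $\beta\,\delta_k^{\beta-1}\,|\nabla|\nabla\varphi|_g|_g/\sinh\varphi$, so using $\sinh\varphi\geq\sinh s$ on $\{s<\varphi<S\}$ together with Lemma~\ref{lemmatecnico} gives
\[
\left|\int_{\partial U_{\delta_k}\cap\{s<\varphi<S\}} g(Y_\beta,\,\cdot\,)\, d\sigma_g\right|
\,\leq\, \frac{\beta\,C}{\sinh s}\,\delta_k^{\,\beta-1+\frac{1}{n-1}},
\]
which tends to zero precisely because $\beta-1+\tfrac{1}{n-1}>0$, i.e.\ $\beta>\tfrac{n-2}{n-1}$. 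For the volume side, the refined Kato inequality \eqref{Katoineqharmfunct} combined with $Q(\nabla\varphi,\nabla\varphi)\geq 0$ from \eqref{Q} ensures that $\sinh(\varphi)\,\mathrm{div}_g Y_\beta\geq 0$ on $\mathring M\setminus\mathrm{Crit}(\varphi)$; since $\mathrm{Crit}(\varphi)$ has zero $\mu_g$-measure (Theorem~\ref{geometryoflevelset}) and $M^{s,S}_{\delta_k}\nearrow\{s<\varphi<S\}$ modulo this null set, monotone convergence allows the passage to the limit in the volume integral. Dividing through by $\beta$ and collecting the three terms yields \eqref{partialfirstintegralidentities2}, the finiteness of both sides being automatic from the finiteness of the mean-curvature boundary integrals.
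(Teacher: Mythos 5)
Your proof is correct, and it rests on exactly the same two pillars as the paper's argument: the nonnegativity of $\operatorname{div}_g Y_\beta$ for $\beta\geq\tfrac{n-2}{n-1}$ (so that monotone convergence applies on the volume side) and the crucial estimate of Lemma~\ref{lemmatecnico} to kill the contribution near $\mathrm{Crit}(\varphi)$ with power $\delta^{\beta-\frac{n-2}{n-1}}$. The only difference in implementation is how the excision is carried out: the paper applies the divergence theorem on the full region $\{s<\varphi<S\}$ to the globally smooth vector field $\Xi_\varepsilon Y_\beta$, where $\Xi_\varepsilon=\chi_\varepsilon\circ|\nabla\varphi|^2_g$ is a smooth cut-off, and then reduces the extra volume term coming from $\nabla\Xi_\varepsilon$ to the same boundary-type estimate via the Coarea Formula; you instead remove the sharp sublevel set $\overline{U_{\delta_k}}$ for a Sard-regular value $\delta_k$ of $|\nabla\varphi|_g$, so that the extra contribution appears directly as a boundary integral over $\{|\nabla\varphi|_g=\delta_k\}$, bounded by Lemma~\ref{lemmatecnico} without passing through coarea. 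Your route is slightly more direct; the paper's cut-off version has the minor advantage of never needing to verify that the truncated domain is a smooth manifold with boundary, which you do correctly address via Sard's theorem applied to $|\nabla\varphi|_g^2$ and the observation that, for $\delta_k$ small, $\overline{U_{\delta_k}}$ is disjoint from $\{\varphi=s\}\cup\{\varphi=S\}$. Both are valid proofs.
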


\begin{proof}
The case $\beta\geq 2$ is an easy adaptation of the argument used in~\cite{Virginia1} but it is anyway a consequence of the following argument. We focus on the unknown case $\frac{n-2}{n-1}<\beta<2$.
In $\mathring{M}\setminus\mathrm{Crit}(\varphi)$ we consider the smooth vector field $Y_{\beta}$, defined in~\eqref{eq61} and satisfying
\begin{align}\label{divY}
0\leq\mathrm{div}_{g}\,Y_{\beta}&=
\frac{\beta\,\vert \nabla \varphi \vert^{\beta-2}_{g}\,\Big[\,(\beta-2)\Big\vert\nabla\vert \nabla \varphi \vert_{g}\,\Big\vert_{g}^{2}+\vert \nabla^{2} \varphi \vert_{g}^{2}+Q(\nabla \varphi ,\nabla \varphi )\,\Big]}{\sinh \varphi}\,,
\end{align}
as already explained. Set
\begin{align}
E_{s}^{S}:=\{s<\varphi<S\},\quad\text{for every $S>s>0$}\,.\,\label{defEsS}
\end{align}
When $ E_{s}^{S}\cap \mathrm{Crit}(\varphi)=\emptyset$, then the statement is a straightforward application of the Divergence Theorem. Now, suppose that $ E_{s}^{S}\cap \mathrm{Crit}(\varphi)\neq\emptyset$.
In this case we consider, for every $\varepsilon>0$ sufficiently small, a smooth nondecreasing cut--off function $\chi_{\varepsilon}:[0,+\infty)\to [0,1]$ satisfying the following conditions
\begin{align}
\chi_{\varepsilon}(\tau)=0\,\text{in $\Big[\,0,\frac{1}{2}\varepsilon\,\Big]$}\,,\quad 0\leq\chi_{\varepsilon}'(\tau)\leq c\varepsilon^{-1}\,\text{in $\Big[\,\frac{1}{2}\varepsilon,\frac{3}{2}\varepsilon\,\Big]$}\,,\quad \chi_{\varepsilon}(\tau)=1\,\text{in $\Big[\,\frac{3}{2}\varepsilon,+\infty\,\Big)$}\,,\,\label{defcutoff}
\end{align}
where $c$ is a positive real constant independent of $\varepsilon$. We then define the smooth function $\Xi_{\varepsilon}:M\to [0,1]$ as
\begin{align}\label{Xieps}
\Xi_{\varepsilon}=\chi_{\varepsilon}\circ \vert \nabla \varphi \vert^{2}_{g}\,,
\end{align}
and apply the Divergence Theorem to the smooth vector field $\Xi_{\varepsilon}\, Y_{\beta}$ in $E_{s}^{S}$.
In this way, we get
\begin{align}
\int\limits_{\{\varphi=s\}} \frac{\vert \nabla \varphi \vert^{\beta}_{g}\,\,\mathrm{H}_{g}}{\sinh\varphi}\,d\sigma_{g}&-\int\limits_{\{\varphi=S\}} \frac{\vert \nabla \varphi \vert^{\beta}_{g}\,\,\mathrm{H}_{g}}{\sinh\varphi}\,d\sigma_{g}=\beta^{-1}\Bigg[\int\limits_{E_{s}^{S}}\Xi_{\varepsilon}\,\mathrm{div}_{g}\,Y_{\beta}\,d\mu_{g}+\int\limits_{E_{s}^{S}}g(\nabla\Xi_{\varepsilon},Y_{\beta})\,d\mu_{g}\Bigg]\\
&=\int\limits_{E_{s}^{S}}\frac{\,\Xi_{\varepsilon}\,\,\vert \nabla \varphi \vert^{\beta-2}_{g}\,\Big[\,(\beta-2)\Big\vert\nabla\vert \nabla \varphi \vert_{g}\,\Big\vert_{g}^{2}+\vert \nabla^{2} \varphi \vert_{g}^{2}+Q(\nabla \varphi ,\nabla \varphi )\,\Big]}{\sinh \varphi}\,d\mu_{g}\\
&+\int\limits_{ \big(U_{\frac{3}{2}\varepsilon} \setminus\overline{U_{\frac{1}{2}\varepsilon}}\big)\cap \, E_{s}^{S}}\,\frac{\chi'_{\varepsilon}\big(\,\vert \nabla \varphi \vert^{2}_{g}\,\big)\,\vert \nabla \varphi \vert^{\beta-2}_{g}\,\Big\vert\nabla\vert \nabla \varphi \vert^{2}_{g}\,\Big\vert_{g}^{2}}{2\sinh \varphi}\,d\mu_{g}\,,
\end{align}
where $U_{\mu}$ is defined in~\eqref{eq62}.
Note that $\{\chi_{\varepsilon}\}$ can always be chosen to be
nondecreasing in $\varepsilon$ so that, in turn, $\{\Xi_{\varepsilon}\}$ is nondecreasing.
Therefore, applying the Monotone Convergence Theorem, when $\varepsilon\to 0^+$, the first term on the right of the second equality tends to
\begin{align}
\int\limits_{E_{s}^{S}}\frac{\,\vert \nabla \varphi \vert^{\beta-2}_{g}\,\Big[\,(\beta-2)\Big\vert\nabla\vert \nabla \varphi \vert_{g}\,\Big\vert_{g}^{2}+\vert \nabla^{2} \varphi \vert_{g}^{2}+Q(\nabla \varphi ,\nabla \varphi )\,\Big]}{\sinh \varphi}\,d\mu_{g}\,.
\end{align}
For obtaining the desired statement, we show
\begin{align}\label{lim1}
\lim_{\varepsilon\to 0^{+}}\int\limits_{ \big(U_{\frac{3}{2}\varepsilon} \setminus\overline{U_{\frac{1}{2}\varepsilon}}\big)\cap \, E_{s}^{S}}\,\frac{\chi'_{\varepsilon}\big(\,\vert \nabla \varphi \vert^{2}_{g}\,\big)\,\vert \nabla \varphi \vert^{\beta-2}_{g}\,\Big\vert\nabla\vert \nabla \varphi \vert^{2}_{g}\,\Big\vert_{g}^{2}}{2\sinh \varphi}\,d\mu_{g}=0\,.
\end{align}
First we observe that
\begin{align*}
\int\limits_{ \big(U_{\frac{3}{2}\varepsilon} \setminus\overline{U_{\frac{1}{2}\varepsilon}}\big)\cap \, E_{s}^{S}}\,\frac{\chi'_{\varepsilon}\big(\,\vert \nabla \varphi \vert^{2}_{g}\,\big)\,\vert \nabla \varphi \vert^{\beta-2}_{g}\,\Big\vert\nabla\vert \nabla \varphi \vert^{2}_{g}\,\Big\vert_{g}^{2}}{2\sinh \varphi}\,d\mu_{g}
&\leq\,\,\int\limits_{ U_{\frac{3}{2}\varepsilon} \setminus\overline{U_{\frac{1}{2}\varepsilon}}}\,\frac{\chi'_{\varepsilon}\big(\,\vert \nabla \varphi \vert^{2}_{g}\,\big)\,\vert \nabla \varphi \vert^{\beta-2}_{g}\,\Big\vert\nabla\vert \nabla \varphi \vert^{2}_{g}\,\Big\vert_{g}^{2}}{2\sinh \varphi}\,d\mu_{g}\\
&\\
&\leq\frac{c}{2\varepsilon}\int\limits_{\frac{1}{2}\varepsilon}^{\frac{3}{2}\varepsilon}\,s^{\frac{\beta-2}{2}}ds\int\limits_{\{\vert \nabla \varphi \vert^{2}_{g}=s\}}\frac{\big\vert\,\nabla\vert \nabla \varphi \vert^{2}_{g}\,\,\big\vert_{g}}{\sinh \varphi}\,d\sigma_{g}
\end{align*}
where, keeping in mind the properties satisfied by $\chi_{\varepsilon}$, in the first inequality we have used the nonnegativity of the integrand function and in the last one the Coarea Formula.
Note that there exist $\varepsilon_{0},\,c_{1}>0$ such that the inequality
\begin{align}\label{disutil}
\frac{1}{s^{\frac{1}{2}\frac{n}{n-1}}}\int\limits_{\{\vert \nabla \varphi \vert^{2}_{g}=s\}}\frac{\quad\big\vert\,\nabla\vert \nabla \varphi \vert^{2}_{g}\,\,\big\vert_{g}}{\sinh \varphi}\,d\sigma_{g}\leq c_{1}
\end{align}
is true a.e. $s\in[\frac{1}{2}\varepsilon,\frac{3}{2}\varepsilon]$ for every $0<\varepsilon<\frac{2}{3}\varepsilon_{0}$, by both Sard's Theorem applied to the smooth function $\vert \nabla \varphi \vert^{2}_{g}\,$ and by Lemma~\ref{lemmatecnico}. 
Then, we get
\begin{align}
\int\limits_{ \big(U_{\frac{3}{2}\varepsilon} \setminus\overline{U_{\frac{1}{2}\varepsilon}}\big)\cap \, E_{s}^{S}}\,\frac{\chi'_{\varepsilon}\big(\,\vert \nabla \varphi \vert^{2}_{g}\,\big)\,\vert \nabla \varphi \vert^{\beta-2}_{g}\,\Big\vert\nabla\vert \nabla \varphi \vert^{2}_{g}\,\Big\vert_{g}^{2}}{2\sinh \varphi}\,d\mu_{g}
&\leq\frac{c}{2\varepsilon}\int\limits_{\frac{1}{2}\varepsilon}^{\frac{3}{2}\varepsilon}\,s^{\frac{\beta-2}{2}}ds\int\limits_{\{\vert \nabla \varphi \vert^{2}_{g}=s\}}\frac{\big\vert\,\nabla\vert \nabla \varphi \vert^{2}_{g}\,\,\big\vert_{g}}{\sinh \varphi}\,d\sigma_{g}\\
&\leq\frac{c\,c_{1}}{2\varepsilon}\,\int\limits_{\frac{1}{2}\varepsilon}^{\frac{3}{2}\varepsilon}\,s^{\frac{\beta-2}{2}+\frac{1}{2}\,\frac{n}{n-1}}\,ds\\
&=\frac{c\,c_{1}}{2\varepsilon}\,\int\limits_{\frac{1}{2}\varepsilon}^{\frac{3}{2}\varepsilon}\,s^{\frac{1}{2}(\beta-\frac{n-2}{n-1})}\,ds\\
&\leq c_{2}\,\varepsilon^{\frac{1}{2}(\beta-\frac{n-2}{n-1})}\,,\,\label{calutil1}
\end{align}
where $c_{2}>0$ is sufficiently big constant. This implies the limit in~\eqref{lim1}, because $\beta>\frac{n-2}{n-1}$.
\end{proof}

\begin{corollary} For every $S>s>0$ regular values of $\varphi$ there exists $r_{s,S}\geq0$ such that
\begin{align*}
\int\limits_{\{\varphi=s\}} \frac{\vert \nabla \varphi \vert^{\frac{n-2}{n-1}}_{g}\,\,\mathrm{H}_{g}}{\sinh\varphi}\,d\sigma_{g}&-\int\limits_{\{\varphi=S\}} \frac{\vert \nabla \varphi \vert^{\frac{n-2}{n-1}}_{g}\,\,\mathrm{H}_{g}}{\sinh\varphi}\,d\sigma_{g}\\
&=\int\limits_{\{s<\varphi<S\}}\frac{\vert \nabla \varphi \vert^{-\frac{n}{n-1}}_{g}\,\Big[\,\vert \nabla^{2} \varphi \vert_{g}^{2}-\frac{n}{n-1}\,\Big\vert\nabla\vert \nabla \varphi \vert_{g}\,\Big\vert_{g}^{2}+Q(\nabla \varphi ,\nabla \varphi )\, \Big]}{\sinh\varphi}\,d\mu_{g}+r_{s,S}\,.
\end{align*}
\end{corollary}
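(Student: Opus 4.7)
The strategy will be to obtain the asserted identity as the limiting case of the integral identity \eqref{partialfirstintegralidentities2}, by letting $\beta$ decrease to the critical threshold $\frac{n-2}{n-1}$. The first move is to rewrite the right-hand side of \eqref{partialfirstintegralidentities2} using exactly the splitting \eqref{eq65}, so that the integrand decomposes as the sum of the nonnegative Kato-plus-sub-static piece
\[
\vert \nabla \varphi \vert^{\beta-2}_{g}\,\Big[\,\vert \nabla^{2} \varphi \vert_{g}^{2}-\frac{n}{n-1}\,\Big\vert\nabla\vert \nabla \varphi \vert_{g}\,\Big\vert_{g}^{2}+Q(\nabla \varphi ,\nabla \varphi )\,\Big]/\sinh\varphi
\]
and of the further nonnegative term $\big(\beta-\frac{n-2}{n-1}\big)\,\vert \nabla \varphi \vert^{\beta-2}_{g}\,\big\vert\nabla\vert \nabla \varphi \vert_{g}\,\big\vert_{g}^{2}/\sinh\varphi$. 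Call $L_\beta$ the left-hand side of \eqref{partialfirstintegralidentities2}, $B_\beta$ the integral over $\{s<\varphi<S\}$ of the first piece above, and $A_\beta$ that of the second; then $L_\beta=A_\beta+B_\beta$ with $A_\beta,B_\beta\geq 0$.

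Next I would deal with the left-hand side. Since $s$ and $S$ are regular values of $\varphi$, the function $\vert\nabla\varphi\vert_{g}$ is bounded below by a positive constant on the compact hypersurfaces $\{\varphi=s\}$ and $\{\varphi=S\}$. Combined with the uniform bound on $\vert\nabla\varphi\vert_{g}$ coming from Lemma~\ref{lemmabound} and the continuity of $\mathrm{H}_{g}$ on these regular level sets, this yields uniform convergence of $\vert\nabla\varphi\vert_{g}^{\beta}\,\mathrm{H}_{g}$ to $\vert\nabla\varphi\vert_{g}^{(n-2)/(n-1)}\,\mathrm{H}_{g}$ as $\beta\downarrow\frac{n-2}{n-1}$, so $L_\beta\to L_\ast$, the left-hand side of the target identity.

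The crucial step is then a Fatou argument on $B_\beta$. Its integrand is pointwise nonnegative and converges pointwise in $\{s<\varphi<S\}$ to the integrand of the target $B_\ast$ (the right-hand-side integral of the corollary), so Fatou's Lemma gives $B_\ast\leq\liminf_{\beta\downarrow(n-2)/(n-1)} B_\beta$. Since $A_\beta\geq 0$ one has $B_\beta\leq L_\beta$ for every admissible $\beta$; taking the $\liminf$ and combining with the inequality just derived delivers
\[
B_\ast\,\leq\,\liminf_{\beta\downarrow(n-2)/(n-1)}B_\beta\,\leq\,\lim_{\beta\downarrow(n-2)/(n-1)}L_\beta\,=\,L_\ast,
\]
so that $r_{s,S}:=L_\ast-B_\ast\geq 0$ is well defined and nonnegative, and by construction $L_\ast=B_\ast+r_{s,S}$, which is exactly the identity to be proved.

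The main obstacle I anticipate is the behaviour of the weight $\vert\nabla\varphi\vert_{g}^{\beta-2}$ near $\mathrm{Crit}(\varphi)$: this factor may blow up, and its dependence on $\beta$ is not monotone, since it is increasing in $\beta$ where $\vert\nabla\varphi\vert_{g}<1$ and decreasing where $\vert\nabla\varphi\vert_{g}>1$. This rules out a naive dominated-convergence scheme, and it is precisely the reason to bypass explicit control of the possibly singular piece $A_\beta$, absorbing any missing convergence into the nonnegative remainder $r_{s,S}$ via the one-sided Fatou bound above.
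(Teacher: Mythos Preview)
Your proof is correct and follows essentially the same route as the paper: pass to the limit $\beta\downarrow\frac{n-2}{n-1}$ in Proposition~\ref{itegralidenty}, use Dominated Convergence on the boundary integrals (since $s,S$ are regular), and invoke Fatou's Lemma on the nonnegative volume integrand to produce the nonnegative defect $r_{s,S}$. The only cosmetic difference is that the paper applies Fatou directly to the full integrand $\vert\nabla\varphi\vert_{g}^{\beta-2}\big[(\beta-2)\big\vert\nabla\vert\nabla\varphi\vert_{g}\big\vert_{g}^{2}+\vert\nabla^{2}\varphi\vert_{g}^{2}+Q(\nabla\varphi,\nabla\varphi)\big]/\sinh\varphi$, which is itself nonnegative by~\eqref{eq65} and whose pointwise limit is already the target integrand (the $A_\beta$-piece vanishes pointwise), whereas you split first and bound $B_\beta\leq L_\beta$ via $A_\beta\geq 0$; both yield the same inequality $B_\ast\leq L_\ast$.
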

\begin{proof}
Let $\{\beta_{m}\}_{m\in \N}$ be a sequence such that $\beta_{m}>\frac{n-2}{n-1}$ and $\beta_{m}\to\frac{n-2}{n-1}$. Due to Proposition~\ref{itegralidenty}, we have
\begin{align}
\int\limits_{\{\varphi=s\}} \frac{\vert \nabla \varphi \vert^{\frac{n-2}{n-1}}_{g}\,\,\mathrm{H}_{g}}{\sinh\varphi}\,d\sigma_{g}&-\int\limits_{\{\varphi=S\}} \frac{\vert \nabla \varphi \vert^{\frac{n-2}{n-1}}_{g}\,\,\mathrm{H}_{g}}{\sinh\varphi}\,d\sigma_{g}\\
&=\lim_{m\to+\infty}
\Bigg[\int\limits_{\{\varphi=s\}} \frac{\vert \nabla \varphi \vert^{\beta_{m}}_{g}\,\,\mathrm{H}_{g}}{\sinh\varphi}\,d\sigma_{g}-\int\limits_{\{\varphi=S\}} \frac{\vert \nabla \varphi \vert^{\beta_{m}}_{g}\,\,\mathrm{H}_{g}}{\sinh\varphi}\,d\sigma_{g}\Bigg]\\
&=\lim_{m\to+\infty}\int\limits_{\{s<\varphi<S\}}\frac{\vert \nabla \varphi \vert^{\beta_{m}-2}_{g}\,\Big[\,(\beta_{m}-2)\,\Big\vert\nabla\vert \nabla \varphi \vert_{g}\,\Big\vert_{g}^{2}+\vert \nabla^{2} \varphi \vert_{g}^{2}+Q(\nabla \varphi ,\nabla \varphi )\, \Big]}{\sinh\varphi}\,d\mu_{g}\\
&\geq \int\limits_{\{s<\varphi<S\}}\frac{\vert \nabla \varphi \vert^{-\frac{n}{n-1}}_{g}\,\Big[\,\vert \nabla^{2} \varphi \vert_{g}^{2}-\frac{n}{n-1}\,\Big\vert\nabla\vert \nabla \varphi \vert_{g}\,\Big\vert_{g}^{2}+Q(\nabla \varphi ,\nabla \varphi )\, \Big]}{\sinh\varphi}\,d\mu_{g}\,,
\end{align}
where the first equality is consequence of the Dominate Converge Theorem keeping in mind that $s$ and $S$ are regular values of $\varphi$ while the inequality follows from Fatou's Lemma. Since $\{\beta_{m}\}_{m\in N}$ is arbitrary, the quantity
\begin{align}
r_{s,S}:=&\lim_{\beta\to\frac{n-2}{n-1}^+} \Bigg\{\,\int\limits_{\{s<\varphi<S\}}\frac{\vert \nabla \varphi \vert^{\beta-2}_{g}\,\Big[\,(\beta-2)\,\Big\vert\nabla\vert \nabla \varphi \vert_{g}\,\Big\vert_{g}^{2}+\vert \nabla^{2} \varphi \vert_{g}^{2}+Q(\nabla \varphi ,\nabla \varphi )\, \Big]}{\sinh\varphi}\,d\mu_{g}\Bigg\}\\
&\quad- \int\limits_{\{s<\varphi<S\}}\frac{\vert \nabla \varphi \vert^{-\frac{n}{n-1}}_{g}\,\Big[\,\vert \nabla^{2} \varphi \vert_{g}^{2}-\frac{n}{n-1}\,\Big\vert\nabla\vert \nabla \varphi \vert_{g}\,\Big\vert_{g}^{2}+Q(\nabla \varphi ,\nabla \varphi )\, \Big]}{\sinh\varphi}\,d\mu_{g}
\end{align}
is well--defined. Moreover, it is nonnegative as above and therefore we get the statement.
\end{proof}

\begin{remark}\label{consequenceofintegralidenty}
For every $\beta> \frac{n-2}{n-1}$ and for every $s>0$ regular value of the function $\varphi$:
\begin{align}
\int\limits_{\{\varphi=s\}} \frac{\vert \nabla \varphi \vert^{\beta}_{g}\,\,\mathrm{H}_{g}}{\sinh\varphi}\,d\sigma_{g}=
\int\limits_{\{\varphi>s\}}\frac{\vert \nabla \varphi \vert^{\beta-2}_{g}\,\Big[\,(\beta-2)\,\Big\vert\nabla\vert \nabla \varphi \vert_{g}\,\Big\vert_{g}^{2}+\vert \nabla^{2} \varphi \vert_{g}^{2}+Q(\nabla \varphi ,\nabla \varphi )\, \Big]}{\sinh\varphi}\,d\mu_{g}\,.\,\label{idintconsequenceofintegralidenty}
\end{align}
For every $S$ big enough, which is a regular value of $\varphi$, by Lemma~\ref{boundutili} with~\eqref{eq60} we have
$$\Bigg\vert \,\int\limits_{\{\varphi=S\}}\vert \nabla \varphi \vert^{\beta}_{g}\,\,\mathrm{H}_{g}\,d\sigma_{g}\,\Bigg\vert\leq \int\limits_{\{\varphi=S\}}\vert \nabla \varphi \vert^{\beta-1}_{g}\,\vert \nabla^{2} \varphi \vert_{g}\,d\sigma_{g}\,\leq \widetilde{c}\,.$$
In particular,
\begin{align}
\lim_{S\to +\infty}\,\frac{1}{\sinh(S)}\,\int\limits_{\{\varphi=S\}}\vert \nabla \varphi \vert^{\beta}_{g}\,\,\mathrm{H}_{g}\,d\sigma_{g}=\,0\,.
\end{align}
Therefore, the desired identity can be obtained by the Monotone Convergence Theorem, by passing to the limit as $S\to+\infty$ in~\eqref{partialfirstintegralidentities2}.
\end{remark}

\begin{remark}\label{sommintfunctsecondintegralidentities}
For every $\beta> \frac{n-2}{n-1}$, as consequence of integral identity~\eqref{partialfirstintegralidentities2}, we have 
\begin{align}\label{eq63}
\vert \nabla \varphi \vert^{\beta-2}_{g}\,\Big[\,(\beta-2)\,\Big\vert\nabla\vert \nabla \varphi \vert_{g}\,\Big\vert_{g}^{2}+\vert \nabla^{2} \varphi \vert_{g}^{2}+Q(\nabla \varphi ,\nabla \varphi )\, \Big]\,\in L^{1}_{\text{loc}}\big(\mathring{M},\mu_{g}\big)\,.
\end{align}
Since 
\begin{align*}
\int\limits_{K}\vert \nabla \varphi \vert^{\beta-3}_{g}\,\,\vert \nabla ^{2}\varphi(\nabla \varphi,\nabla \varphi)\vert\,d\mu_{g}\leq \int\limits_{K}\vert \nabla \varphi \vert^{\beta-1}_{g}\,\Big\vert\nabla\vert \nabla \varphi \vert_{g}\,\Big\vert_{g} \,d\mu_{g}=
\int\limits_{K}\vert \nabla \varphi \vert^{\frac{\beta}{2}}_{g}\,\vert \nabla \varphi \vert^{\frac{\beta-2}{2}}_{g}\,\Big\vert\nabla\vert \nabla \varphi \vert_{g}\,\Big\vert_{g} \,d\mu_{g}
\end{align*}
for every $K\subset \mathring{M}$ compact, by H\"{o}lder's Inequality from~\eqref{eq63} with~\eqref{eq65} we get that
\begin{align}\label{eq64}
\vert \nabla \varphi \vert^{\beta-3}_{g}\,\,\nabla ^{2}\varphi(\nabla \varphi,\nabla \varphi)\,\in L^{1}_{\text{loc}}\big(\mathring{M},\mu_{g}\big)\,.
\end{align}
\end{remark}
\medskip
 
\noindent
We need a final lemma before stating the (last and) most important result of this section. 
\begin{lemma}\label{firstitegralidenty}
Let $(M,g_{0},u)$ be a sub-static harmonic triple, and let $g$ and $\varphi$ be the metric and the function defined in~\eqref{eq50}. 
Then, the following statements hold true.
\begin{enumerate}[ label=(\roman*)]
\item For every $\beta\geq0$ and for every $S>s>0$:
\begin{align}
\int\limits_{\{\varphi=S\}}\frac{\vert \nabla \varphi \vert^{\beta+1}_{g}}{\sinh\varphi}\,d\sigma_{g}&-\int\limits_{\{\varphi=s\}}\frac{\vert \nabla \varphi \vert^{\beta+1}_{g}}{\sinh\varphi}\,d\sigma_{g}=\,\label{partialfirstintegralidentities1}\\
&=\int\limits_{\{s<\varphi<S\}}\frac{\vert \nabla \varphi \vert^{\beta-2}_{g}\,\Big[\,\beta\,\nabla ^{2}\varphi(\nabla \varphi,\nabla \varphi)\,-\coth(\varphi)\vert \nabla \varphi \vert^{4}_{g} \Big]}{\sinh\varphi}\,d\mu_{g}\,.
\end{align}
\item For every $\beta\geq0$ and for every $s>0$:
\begin{align}
\int\limits_{\{\varphi=s\}}\frac{\vert \nabla \varphi \vert^{\beta+1}_{g}}{\sinh\varphi}\,d\sigma_{g}=\int\limits_{\{\varphi>s\}}\frac{\vert \nabla \varphi \vert^{\beta-2}_{g}\,\Big[\,\coth(\varphi)\vert \nabla \varphi \vert^{4}_{g}-\beta\,\nabla ^{2}\varphi(\nabla \varphi,   \nabla \varphi)\, \Big]}{\sinh\varphi}\,d\mu_{g}\,.
\end{align}
\item The function $\Phi_{\beta}:[0,\infty)\to \R$, defined by formula
\begin{align}\label{Phibeta}
\Phi_{\beta}(s):=\int\limits_{\{\varphi=s\}}\vert \nabla \varphi \vert^{\beta+1}_{g}\,d\sigma_{g}
\end{align}
for every $\beta\geq0$, is continuous and admits for every $s>0$ the integral representation
\begin{align}
\Phi_{\beta}(s):=\sinh (s)\,\int\limits_{\{\varphi>s\}}\frac{\vert \nabla \varphi \vert^{\beta-2}_{g}\,\Big[\,\coth(\varphi)\vert \nabla \varphi \vert^{4}_{g}-\beta\,\nabla ^{2}\varphi(\nabla \varphi,\nabla \varphi)\, \Big]}{\sinh\varphi}\,d\mu_{g}\,.
\end{align}
\end{enumerate}
\end{lemma}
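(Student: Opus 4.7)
The approach is to apply the divergence theorem to the vector field
\[
X_\beta \;:=\; \frac{|\nabla\varphi|^{\beta}_g\,\nabla\varphi}{\sinh\varphi},
\]
which is smooth on $\mathring{M}\setminus\mathrm{Crit}(\varphi)$, has flux $|\nabla\varphi|^{\beta+1}_g/\sinh\varphi$ across $\{\varphi=s\}$ with respect to $\nu_g$, and (using $\Delta_g\varphi=0$ together with $g(\nabla|\nabla\varphi|^{\beta}_g,\nabla\varphi) = \beta|\nabla\varphi|^{\beta-2}_g\,\nabla^2\varphi(\nabla\varphi,\nabla\varphi)$) satisfies
\[
\mathrm{div}_g X_\beta \;=\; \frac{|\nabla\varphi|^{\beta-2}_g\bigl[\beta\,\nabla^2\varphi(\nabla\varphi,\nabla\varphi)-\coth(\varphi)\,|\nabla\varphi|^{4}_g\bigr]}{\sinh\varphi}
\]
outside $\mathrm{Crit}(\varphi)$. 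Both summands are locally $L^{1}$ on $\mathring M$: the second by continuity, and the first in view of $|\nabla^2\varphi(\nabla\varphi,\nabla\varphi)|\leq|\nabla\varphi|^{2}_g\,|\nabla|\nabla\varphi|_g|_g$ together with the global bound on $|\nabla^2\varphi|_g$ from Lemma~\ref{lemmabound}.

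To prove (i) for regular $s<S$ with $E_s^S\cap\mathrm{Crit}(\varphi)\neq\emptyset$, I would repeat the cutoff scheme of Proposition~\ref{itegralidenty}: apply the divergence theorem to $\Xi_\varepsilon X_\beta$ with $\Xi_\varepsilon = \chi_\varepsilon\circ|\nabla\varphi|^{2}_g$ as in~\eqref{defcutoff}, and then send $\varepsilon\to 0^{+}$. Since $\mu_g(\mathrm{Crit}(\varphi))=0$ by Theorem~\ref{geometryoflevelset} and $\Xi_\varepsilon\nearrow 1$ pointwise off $\mathrm{Crit}(\varphi)$, dominated convergence transports the principal bulk term to the desired right-hand side. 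The error coming from $\nabla\Xi_\varepsilon$ obeys
\[
\Bigl|\int_{E_s^S} g(\nabla\Xi_\varepsilon,X_\beta)\,d\mu_g\Bigr| \;\leq\; \frac{2c}{\varepsilon}\,(3\varepsilon/2)^{\frac{\beta+2}{2}}\,\frac{\|\nabla^2\varphi\|_\infty}{\sinh s}\,\mu_g\bigl(U_{3\varepsilon/2}\bigr) \;\leq\; C\,\varepsilon^{\beta/2}\,\mu_g(U_{3\varepsilon/2}),
\]
which vanishes as $\varepsilon\to 0^{+}$ for every $\beta\geq 0$ by continuity of $\mu_g$ along $U_\varepsilon\downarrow\mathrm{Crit}(\varphi)$. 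Part (ii) is then obtained by sending $S\to\infty$ along regular values in (i): Lemma~\ref{lemmabound} forces the boundary term at $\{\varphi=S\}$ to decay like $C/\sinh S\to 0$, and rearranging signs yields the stated identity, with the right-hand side read as the (existing) improper integral $\lim_{S\to\infty}\int_{\{s<\varphi<S\}}$.

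For (iii), the integral representation is a direct restatement of (ii). Continuity of $\Phi_\beta$ at a regular value follows from (i) because the oscillation $|\Phi_\beta(s_1)/\sinh s_1 - \Phi_\beta(s_2)/\sinh s_2|$ is bounded by the local $L^{1}$-norm of $\mathrm{div}_g X_\beta$ on $\{s_1<\varphi<s_2\}$; extension to a critical value $s_0>0$ proceeds by approximation through regular values together with the coarea identity $\int_{s_1}^{s_2}\Phi_\beta(s)\,ds = \int_{\{s_1<\varphi<s_2\}}|\nabla\varphi|^{\beta+2}_g\,d\mu_g$. Continuity at $s=0$ follows from $|\nabla\varphi|_g>0$ on $\partial M$ (Hopf Lemma), which makes $\{\varphi=s\}$ a smooth hypersurface converging smoothly to $\partial M$ as $s\downarrow 0$. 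The main technical obstacle, inherited from Proposition~\ref{itegralidenty}, is the cutoff error estimate; in contrast with that argument, no refined Kato inequality is required because $\nabla^2\varphi(\nabla\varphi,\nabla\varphi)$ is a single scalar contraction to which Cauchy--Schwarz applies directly, so the decay of $\mu_g(U_\varepsilon)$ alone suffices.
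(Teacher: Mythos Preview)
Your approach with the vector field $X_\beta$ and the cutoff $\Xi_\varepsilon$ is correct for regular values and matches the paper's strategy for the error estimate; you are right that Cauchy--Schwarz suffices here and neither the refined Kato inequality nor Lemma~\ref{lemmatecnico} is needed. However, there is a genuine gap: the statement of part~(i) is for \emph{every} $S>s>0$, including critical values of $\varphi$, whereas your argument only treats regular $s,S$. When $s$ (say) is a critical value, the domain $\{s<\varphi<S\}$ does not have smooth boundary, so the divergence theorem applied to $\Xi_\varepsilon X_\beta$ on this domain is not justified as stated. Your proposed extension in~(iii) via the coarea identity is insufficient: coarea tells you that $\int_{s_1}^{s_2}\Phi_\beta(s)\,ds$ is continuous in $(s_1,s_2)$, hence $\Phi_\beta\in L^1_{\mathrm{loc}}$, but this gives no pointwise information at a fixed singular value $s_0$. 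Knowing that $\Phi_\beta(s_m)\to L$ along regular $s_m\to s_0$ (which does follow from your~(i) plus absolute continuity of the volume integral) does not by itself identify $L$ with $\Phi_\beta(s_0)$. Parts~(ii) and~(iii) inherit the same gap, and the lemma is later applied in Proposition~\ref{Conforme Monotonicity theorem} precisely for arbitrary $s<\overline s$, so the full statement is needed.

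The paper closes this gap with an additional device: it perturbs $\varphi$ to $\varphi_\varepsilon:=\varphi-\zeta_\varepsilon(|\nabla\varphi|^2_g)\,\delta(\varepsilon)$, where $\zeta_\varepsilon$ is a cutoff equal to $1$ near $\mathrm{Crit}(\varphi)$ and $\delta(\varepsilon)\in(0,d\varepsilon)$ is chosen so that $s+\delta(\varepsilon)$ is a regular value of $\varphi$. One checks that $s$ becomes a regular value of $\varphi_\varepsilon$, so the divergence theorem applies on the smooth domain $\{s<\varphi_\varepsilon<S\}$; the cutoff factor $\Xi_{4\varepsilon}$ is arranged so that wherever it is nonzero one has $\varphi_\varepsilon=\varphi$, and the boundary contribution reduces to an integral over $\{\varphi=s,\,|\nabla\varphi|^2_g\geq\tfrac32\varepsilon\}$. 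Monotone convergence then recovers $\int_{\{\varphi=s\}}|\nabla\varphi|^{\beta+1}_g/\sinh\varphi\,d\sigma_g$ directly in its original definition, and~(i) follows for arbitrary $s$. Once~(i) holds for all $s,S$, the continuity of $\Phi_\beta$ in~(iii) and the identity in~(ii) for all $s$ follow by absolute continuity of the integral, exactly as you indicate. This perturbation of the level function is the missing ingredient in your argument.
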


\noindent
This lemma can be proved as~\cite[Proposition 4.1]{Virginia1}. In the Appendix we provide an alternative proof which is self contained and does not make use of any fine property of the measure of $\mathrm{Crit}(\varphi)$: 
we just need to know very classical properties of it (see Remark~\ref{remark2}).

\begin{proposition}\label{Conforme Monotonicity theorem}
Let $(M,g_{0},u)$ be a sub-static harmonic triple, let $g$ and $\varphi$ be the metric and the function defined in~\eqref{eq50}, and let $\Phi_{\beta}:[0,\infty)\to \R$ be the function defined by formula~\eqref{Phibeta} for every $\beta\geq0$. Then for every $\beta>\frac{n-2}{n-1}$, the function $\Phi_{\beta}$ is continuously differentiable. The derivative $\Phi'_{\beta}$  is nonpositive and admits for every $s>0$ the integral representation
\begin{equation}\label{Phi'}
 \Phi'_{\beta}(s)=-\beta\, \sinh( s) \,\int\limits_{\{\varphi>s\}}\frac{\vert \nabla \varphi \vert^{\beta-2}_{g}\,\Big[\,(\beta-2)\,\Big\vert\nabla\vert \nabla \varphi \vert_{g}\,\Big\vert_{g}^{2}+\vert \nabla^{2} \varphi \vert_{g}^{2}+Q(\nabla \varphi ,\nabla \varphi )\, \Big]}{\sinh\varphi}\,d\mu_{g}\leq 0\,.
\end{equation}
Moreover, if there exists $s_{0}>0$ such that $\Phi'_{\beta}(s_{0})=0$ for some $\beta>\frac{n-2}{n-1}$, then $(\{\varphi\geq s_{0}\},g)$ is isometric to $\big([0,+\infty)\times \{\varphi=s_{0}\},d\rho\otimes d\rho+g_{\{\varphi=s_{0}\}})$, where $\rho$ is the $g$--distance function to $\{\varphi=s_{0}\}$ and $\varphi$ is an affine function of $\rho$ in $\{\varphi\geq s_{0}\}$.
\end{proposition}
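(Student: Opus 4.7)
The plan is to differentiate the integral representation of Lemma \ref{firstitegralidenty}(iii), rewrite the result using the fundamental identity \eqref{idintconsequenceofintegralidenty} from Remark \ref{consequenceofintegralidenty}, and exploit the refined Kato nonnegativity recorded in \eqref{eq65}. Write $A(s):=\Phi_\beta(s)/\sinh(s)=\int_{\{\varphi>s\}}F_\beta\, d\mu_g$ with $F_\beta=|\nabla\varphi|_g^{\beta-2}[\coth(\varphi)|\nabla\varphi|_g^{4}-\beta\nabla^2\varphi(\nabla\varphi,\nabla\varphi)]/\sinh\varphi$; by Remark \ref{sommintfunctsecondintegralidentities}, $F_\beta\in L^1_{\mathrm{loc}}(\mathring M,\mu_g)$. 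For a regular value $s>0$, Theorem \ref{geometryoflevelset}(iii) (transferred to $\varphi$ via Remark \ref{remark2}) provides a whole open interval of regular values around $s$, on which $\Sigma_s:=\{\varphi=s\}$ is a smooth compact hypersurface with $|\nabla\varphi|_g$ bounded below by a positive constant. The coarea formula then yields $A'(s)=-\int_{\Sigma_s}F_\beta/|\nabla\varphi|_g\, d\sigma_g$; inserting this into $\Phi'_\beta(s)=\cosh(s)A(s)+\sinh(s)A'(s)$, using $\nabla^2\varphi(\nabla\varphi,\nabla\varphi)=-|\nabla\varphi|_g^{3}\mathrm{H}_g$, and simplifying, the $\coth(s)\Phi_\beta(s)$-type contributions cancel and one obtains
\[
\Phi'_\beta(s)=-\beta\int_{\Sigma_s}|\nabla\varphi|_g^{\beta}\,\mathrm{H}_g\, d\sigma_g.
\]

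Substituting \eqref{idintconsequenceofintegralidenty}, multiplied by $\sinh(s)$ since $\varphi\equiv s$ on $\Sigma_s$, immediately produces the claimed representation \eqref{Phi'}. The integrand there is nonnegative by \eqref{eq65}, that is, by the refined Kato inequality \eqref{Katoineqharmfunct} combined with $Q\geq 0$ from \eqref{Q}, so $\Phi'_\beta\leq 0$. Continuity of $\Phi'_\beta$ on $(0,+\infty)$ is inherited from its integral representation: as $s$ varies, the region $\{\varphi>s\}$ shrinks monotonically and the nonnegative integrand is locally integrable in $\mathring M$, so monotone convergence delivers continuous dependence on $s$. Since regular values of $\varphi$ are dense in $(0,+\infty)$, formula \eqref{Phi'} extends by continuity to \emph{every} $s>0$, giving the $C^1$ regularity of $\Phi_\beta$ on $(0,+\infty)$.

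For the rigidity, suppose $\Phi'_\beta(s_0)=0$ for some $s_0>0$ and some $\beta>(n-2)/(n-1)$. Then the nonnegative integrand in \eqref{Phi'} vanishes $\mu_g$-a.e.\ on $\{\varphi>s_0\}\setminus\mathrm{Crit}(\varphi)$. Decomposing it via \eqref{eq65} into the sum of the three nonnegative terms $(\beta-(n-2)/(n-1))|\nabla|\nabla\varphi|_g|_g^{2}$, the Kato defect $|\nabla^2\varphi|_g^{2}-\tfrac{n}{n-1}|\nabla|\nabla\varphi|_g|_g^{2}$, and $Q(\nabla\varphi,\nabla\varphi)$, each summand vanishes a.e.\ there. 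Since $\mathrm{Crit}(\varphi)$ has measure zero (Remark \ref{remark2}) and $|\nabla\varphi|_g^{2}$ is smooth, $|\nabla\varphi|_g$ must be constant on the connected open set $\{\varphi>s_0\}$ (Remark \ref{disposizionelevelset}); by the asymptotic \eqref{eq60} this constant equals $c:=(n-2)(2\mathcal C)^{-1/(n-2)}>0$, whence $\mathrm{Crit}(\varphi)\cap\{\varphi\geq s_0\}=\emptyset$. Equality in the Kato inequality for the harmonic function $\varphi$, coupled with $\nabla|\nabla\varphi|_g\equiv 0$, then forces $\nabla^2\varphi\equiv 0$ throughout $\{\varphi>s_0\}$; hence $\nabla\varphi/c$ is a parallel unit vector field and the normal exponential map from $\Sigma_{s_0}$ furnishes the isometric splitting $(\{\varphi\geq s_0\},g)\cong([0,+\infty)\times\Sigma_{s_0},d\rho\otimes d\rho+g_{\Sigma_{s_0}})$ with $\varphi=c\rho+s_0$ affine.

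The main technical obstacle lies in the first step: since $F_\beta$ is not signed, the differentiation producing $A'(s)$ requires both the $L^1_{\mathrm{loc}}$ control of Remark \ref{sommintfunctsecondintegralidentities} and the existence of an open interval of regular values around $s$ (Theorem \ref{geometryoflevelset}(iii)) in order to invoke coarea cleanly. Once the representation of $\Phi'_\beta$ at regular values is in hand, continuous differentiability on all of $(0,+\infty)$, the sign of $\Phi'_\beta$, and the rigidity step all follow from the nonnegativity structure provided by \eqref{eq65} together with the connectedness of $\{\varphi>s_0\}$.
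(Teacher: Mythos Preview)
Your computation at regular values is correct and matches the paper: differentiating $A(s)=\Phi_\beta(s)/\sinh s$ via coarea and substituting \eqref{idintconsequenceofintegralidenty} does give $\Phi'_\beta(s)=-\beta\sinh(s)\int_{\{\varphi>s\}}(\cdots)/\sinh\varphi\,d\mu_g$ there, and the rigidity step (parallel unit field $\Rightarrow$ splitting) is a legitimate alternative to the paper's citation of Kasue.

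The gap is in your passage from regular to arbitrary $s$. Knowing that $\Phi_\beta$ is continuous, that $\Phi'_\beta$ exists on the open dense full-measure set of regular values, and that it agrees there with a continuous function $G$, does \emph{not} imply $\Phi_\beta\in C^1$ with $\Phi'_\beta=G$ everywhere. The Cantor staircase is a counterexample: continuous, differentiable with derivative $0$ on an open dense set of full measure, yet not $C^1$. Density of regular values is not enough; you need some form of absolute continuity of $\Phi_\beta$ to bridge the critical values, and you never invoke it.

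The paper closes this gap by working with $\Upsilon_\beta=\Phi_\beta/\sinh$ and using Lemma~\ref{firstitegralidenty}(i), which is valid for \emph{all} $0<s<\bar s$ (not just regular ones). Combined with coarea and Sard, it yields the exact identity
\[
\Upsilon_\beta(\bar s)-\Upsilon_\beta(s)=\int_s^{\bar s}\big[-\beta\,\Psi_\beta(\tau)-\coth(\tau)\,\Upsilon_\beta(\tau)\big]\,d\tau,
\]
where $\Psi_\beta$ is the continuous extension of $\tau\mapsto\int_{\{\varphi=\tau\}}|\nabla\varphi|_g^\beta\mathrm H_g/\sinh\varphi\,d\sigma_g$ built from Proposition~\ref{itegralidenty} (this is where the $L^1_{\mathrm{loc}}$ information of Remark~\ref{sommintfunctsecondintegralidentities} is actually used). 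The Fundamental Theorem of Calculus then gives $\Upsilon_\beta\in C^1(0,\infty)$ directly, with no appeal to density. Your argument can be repaired in the same spirit: Lemma~\ref{firstitegralidenty}(i) already shows that $\Upsilon_\beta$ is absolutely continuous, so its a.e.\ derivative (your $G$, continuous) integrates back to $\Upsilon_\beta$, forcing $\Phi_\beta\in C^1$. But as written, the sentence ``formula \eqref{Phi'} extends by continuity to every $s>0$, giving the $C^1$ regularity'' is not a proof.
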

\noindent
The following proof is essentially the same as in~\cite{Virginia1}. For completeness, we include it here, in a slightly refined version.
\begin{proof}{\em \underline{Step $1$: Continuous Differentiability and Monotonicity.}}
Let $\beta>\frac{n-2}{n-1}$. Note that the boundary $\partial M$ is a regular level set of $\varphi$ and then, by Theorem~\ref{geometryoflevelset} and the relationship between $\mathrm{Crit}(u)$ and $\mathrm{Crit}(\varphi)$, there exists $\epsilon_{0}$ such that the interval $[0,\epsilon_{0}]$ doesn't contain critical values of the function $\varphi$. Therefore, for every $0<\epsilon\leq\epsilon_{0}$, applying first the Divergence Theorem to the smooth vector field $\vert \nabla \varphi \vert^{\beta}_{g}\, \nabla \varphi$ in $\{0<\varphi<\epsilon\}$ and later the Coarea Formula, we get
$$\Phi_{\beta}(\epsilon)=\Phi_{\beta}(0)-\beta\,\int\limits_{0}^{\epsilon}\,ds\,\int\limits_{\{\varphi=s\}}\vert \nabla \varphi \vert^{\beta}_{g}\,\mathrm{H}_{g}\,d\sigma_{g}\,.$$
Being 
$$\int\limits_{\{\varphi=s_{1}\}}\vert \nabla \varphi \vert^{\beta}_{g}\,\mathrm{H}_{g}\,d\sigma_{g}-\int\limits_{\{\varphi=s_{2}\}}\vert \nabla \varphi \vert^{\beta}_{g}\,\mathrm{H}_{g}\,d\sigma_{g}=\beta^{-1}\int\limits_{\{s_{1}\leq\varphi\leq s_{2}\}}\mathrm{div}_{g}\big(\nabla \vert \nabla \varphi \vert^{\beta}_{g}\big)\,d\mu_{g}$$
for every $0\leq s_{1}<s_{2}\leq \epsilon_{0}$, by the Dominated Convergence Theorem the function $$s\in[0,\epsilon_{0}]\to\int\limits_{\{\varphi=s\}}\vert \nabla \varphi \vert^{\beta}_{g}\,\mathrm{H}_{g}\,d\sigma_{g}\in\R$$
is continuous and therefore, by the Fundamental Theorem of Calculus $\Phi_{\beta}$ is continuously differentiable on the closed interval $[0,\epsilon_{0}]$.\\
Let $s_{0}$ be a regular value of the function $\varphi$. By Remark~\ref{sommintfunctsecondintegralidentities}, we can define the function $\Psi_{\beta}:(0,+\infty)\to\R$ by
\begin{align}
\Psi_{\beta}(s)=\begin{cases}\int\limits_{\{\varphi=s_{0}\}} \frac{\vert \nabla \varphi \vert^{\beta}_{g}\,\,\mathrm{H}_{g}}{\sinh\varphi}\,d\sigma_{g}+
\int\limits_{\{s<\varphi<s_{0}\}}\frac{\vert \nabla \varphi \vert^{\beta-2}_{g}\,\Big[\,(\beta-2)\,\big\vert\nabla\vert \nabla \varphi \vert_{g}\,\big\vert_{g}^{2}+\vert \nabla^{2} \varphi \vert_{g}^{2}+Q(\nabla \varphi ,\nabla \varphi )\, \Big]}{\sinh\varphi}\,d\mu_{g}
&\mbox{if}\,s\leq s_{0}\\ \\
\int\limits_{\{\varphi=s_{0}\}} \frac{\vert \nabla \varphi \vert^{\beta}_{g}\,\,\mathrm{H}_{g}}{\sinh\varphi}\,d\sigma_{g}-
\int\limits_{\{s_{0}<\varphi<s\}}\frac{\vert \nabla \varphi \vert^{\beta-2}_{g}\,\Big[\,(\beta-2)\,\big\vert\nabla\vert \nabla \varphi \vert_{g}\,\big\vert_{g}^{2}+\vert \nabla^{2} \varphi \vert_{g}^{2}+Q(\nabla \varphi ,\nabla \varphi )\, \Big]}{\sinh\varphi}\,d\mu_{g}
&\mbox{if}\,s>s_{0} \,,
\end{cases}
\end{align}
which satisfies the following properties
\begin{enumerate}[label =(\em{\roman*})]
\item for every $s>0$ regular value of the function $\varphi$, we have $\Psi_{\beta}(s)=\int\limits_{\{\varphi=s\}} \frac{\vert \nabla \varphi \vert^{\beta}_{g}\,\,\mathrm{H}_{g}}{\sinh\varphi}\,d\sigma_{g}$\,;
\item the function $\Psi_{\beta}$ is continuous on its definition interval $(0,+\infty)$.
\end{enumerate}
The first statement follows immediately from Proposition~\ref{itegralidenty}. As for the second statement, we first observe that
\begin{align}\label{eq3}
\Psi_{\beta}(s)-\Psi_{\beta}(\overline{s})=\int\limits_{\{s<\varphi<\overline{s}\}}\frac{\vert \nabla \varphi \vert^{\beta-2}_{g}\,\Big[\,(\beta-2)\,\big\vert\nabla\vert \nabla \varphi \vert_{g}\,\big\vert_{g}^{2}+\vert \nabla^{2} \varphi \vert_{g}^{2}+Q(\nabla \varphi ,\nabla \varphi )\, \Big]}{\sinh\varphi}\,d\mu_{g}
\end{align}
for every couple $0<s<\overline{s}<+\infty$. Always by Remark~\ref{sommintfunctsecondintegralidentities} and by the Dominated Convergence Theorem, we can deduce the right and the left continuity of $\Psi_{\beta}$ on the interval $(0,+\infty)$.\\
We consider $\Upsilon_{\beta}:s\in(0,+\infty)\to\frac{\Phi_{\beta}(s)}{\sinh s}\in\R\,$.
For every $(s,\overline{s})$ couple of real number such that $0<s<\overline{s}<+\infty$, we have 
\begin{align}
\frac{\Upsilon_{\beta}(\overline{s})-\Upsilon_{\beta}(s)}{\overline{s}-s}&=\frac{1}{\overline{s}-s}\int\limits_{\{s<\varphi<\overline{s}\}}\frac{\vert \nabla \varphi \vert^{\beta-2}_{g}\,\Big[\,\beta\,\nabla ^{2}\varphi(\nabla \varphi,\nabla \varphi)\,-\coth(\varphi)\vert \nabla \varphi \vert^{4}_{g} \Big]}{\sinh\varphi}\,d\mu_{g}\\
&=\frac{1}{\overline{s}-s}\int\limits_{s}^{\overline{s}}d\tau\int\limits_{\{\varphi=\tau\}}\frac{\vert \nabla \varphi \vert^{\beta-3}_{g}\,\Big[\,\beta\,\nabla ^{2}\varphi(\nabla \varphi,\nabla \varphi)\,-\coth(\varphi)\vert \nabla \varphi \vert^{4}_{g} \Big]}{\sinh\varphi}\,d\sigma_{g}\\
&\overset{(\star)}{=} -\frac{\beta}{\overline{s}-s}\,\int\limits_{s}^{\overline{s}}\Psi_{\beta}(\tau)\,d\tau-\frac{1}{\overline{s}-s}\,\int\limits_{s}^{\overline{s}}\coth(\tau)\Upsilon_{\beta}(\tau)\, d\tau\,,
\end{align}
where the first equality follows from Lemma~\ref{firstitegralidenty} $(i)$, the second equality from the Coarea Formula keeping in mind~\eqref{eq64}. Moreover, the last equality follows from $(i)$ and from Sard's Theorem.
Using the continuity of both the functions $\Upsilon_{\beta}$ and $\Psi_{\beta}$, passing to the limit in $(\star)$ for either $s\to \overline{s}$ or $\overline{s}\to s$ yields that the function $\Upsilon_{\beta}$ is $C^{1}$, and 
$$\Upsilon_{\beta}'(\,\cdot\, )=-\beta\, \Psi_{\beta}(\,\cdot\,)-\coth(\,\cdot\,)\,\Upsilon_{\beta}(\,\cdot\,)\,.$$
Since $\Phi_{\beta}(s)=\sinh(s)\Upsilon_{\beta}(s)$ for every $s>0$, then $\Phi_{\beta}\in C^{1}(0,+\infty)$ and $\Phi'_{\beta}(s)=-\beta\,\sinh(s)\Psi_{\beta}(s)$. Moreover, by~\eqref{eq3}, we can see
\begin{align}\label{eq4}
\frac{\Phi_{\beta}'(S)}{\sinh(S)}-\frac{\Phi_{\beta}'(s)}{\sinh(s)}&=-\beta\,\Psi_{\beta}(S)+\beta\,\Psi_{\beta}(s)\\
&=\beta\,\int\limits_{\{s<\varphi<S\}}\frac{\vert \nabla \varphi \vert^{\beta-2}_{g}\,\Big[\,(\beta-2)\,\big\vert\nabla\vert \nabla \varphi \vert_{g}\,\big\vert_{g}^{2}+\vert \nabla^{2} \varphi \vert_{g}^{2}+Q(\nabla \varphi ,\nabla \varphi )\, \Big]}{\sinh\varphi}\,d\mu_{g}
\end{align}
for every $0<s<S<+\infty$.\\
Finally the integral representation~\eqref{Phi'} follows in the limit as $S\to +\infty$ of the above identity, by using the Monotone Convergence Theorem, and by the fact that 
$$\lim_{S\to+\infty}\,\frac{\Phi_{\beta}'(S)}{\sinh(S)}\,=-\beta \lim_{S\to+\infty}\,\Psi_{\beta}(S)=-\beta \lim_{S\to+\infty}\,\int\limits_{\{\varphi=S\}} \frac{\vert \nabla \varphi \vert^{\beta}_{g}\,\,\mathrm{H}_{g}}{\sinh\varphi}\,d\sigma_{g}=0\,.$$\\

\noindent
{\em \underline{Step $2$: Outer Rigidity.}}
Let $\beta>\frac{n-2}{n-1}$ and suppose $\Phi'_{\beta}(s_{0})=0$ for some $s_{0}>0$. By~\eqref{Phi'} with~\eqref{eq65} 
we deduce that 
\begin{equation}\label{eq14}
\Big(\beta-\frac{n-2}{n-1}\Big)\,\Big\vert\nabla\vert \nabla \varphi \vert_{g}\,\Big\vert_{g}^{2}\equiv0 \quad\text{and} \quad \vert \nabla^{2} \varphi \vert_{g}^{2}-\frac{n}{n-1}\Big\vert\nabla\vert \nabla \varphi \vert_{g}\,\Big\vert_{g}^{2}\equiv0\quad \text{in}\,\{\varphi\geq s_{0}\}\setminus\mathrm{Crit}(\varphi)\,.
\end{equation}
Consequently $\nabla^{2} \varphi\equiv 0$ in $\{\varphi\geq s_{0}\}$ being $\mu_{g}\big(\mathrm{Crit}(\varphi)\,\big)=0$, and hence $\vert \nabla \varphi \vert_{g}^{2}\equiv a^{2}$ with $a>0$ since $\{\varphi\geq s_{0}\}$ is connected,
due to Remark~\ref{remark2}.
Then, $\{\varphi\geq s_{0}\}$, with the induced Riemanninan metric, 
is a noncompact, connected and complete Riemannian manifold 
(being properly embedded in $M$), with smooth, compact and totally geodesic boundary, 
and with $\mathrm{Ric}_{g}\geq0$ (from the inequality in~\eqref{sistconf}).
Applying~\cite[Theorem C]{Kasue}, we can thus deduce that the level set 
$\{\varphi \,=\,s_{0}\}$ is connected (this is true in general and not 
only in the rigid case, if $s_0\gg0$, as observed in Remark~\ref{remark2}), and that
$\{\varphi\geq s_{0}\}$ is isometric to the product
$[0,+ \infty)\times\{\varphi \,=\,s_{0}\}$. Moreover, the isometry from the product $[0,+ \infty)\times\{\varphi \,=\,s_{0}\}$ to $\{\varphi\geq s_{0}\}$ is given by the normal exponential map.\\ 
Now we want to prove that $\varphi$ is an affine function of $\rho$ on $\{\varphi\geq s_{0}\}$.
First, we remark that every integral curve $\gamma_{p}$ of $\nabla \varphi$ outgoing from a point $p$ of $\{\varphi \,=\,s_{0}\}$ is defined on the interval $[0,+\infty)$, and it is contained in $\{\varphi\geq s_{0}\}$, by the completeness and since $\vert\nabla \varphi\vert_{g}>0$.
Furthermore, $\varphi \circ \gamma_{p}(t)=a^{2}t+s_{0}$ for every $t\in[0,+\infty)$, and all the curves $\gamma_{p}$ realize the distance between the hypersurfaces $\{\varphi = s_{0}\}$ and $\{\varphi = s_{1}\}$ with $s_{1}>s_{0}$.
Indeed, for any curve $\sigma:[0,l]\to \{\varphi\geq s_{0}\}$ parametrized by arc--length joining a point of $\{\varphi = s_{0}\}$ to a point of $\{\varphi = s_{1}\}$ we have
\begin{align}
L_{g}(\, \sigma\,)&=\int\limits_{0}^{l} \vert \,\dot{\sigma}(\tau)\,\vert_{g}\,\,d\tau\geq \Bigg\vert \int\limits_{0}^{l} g\Big(\,\dot{\sigma}(\tau),\frac{1}{a}\nabla \varphi \,\big(\sigma(t)\,\big)\,\Big)\,\,d\tau \Bigg\vert=\frac{1}{a}\vert \varphi \circ \sigma\,(l)-\varphi \circ \sigma(0)\vert \\
&=at=L_{g}(\, \gamma_{\sigma(0)}\,|_{[0,t]}\,)=L_{g}(\, \gamma_{\cdot}\,|_{[0,t]}\,)\,,
\end{align}
where $s_{1},s_{0}$ and $t$ satisfy $s_{1}=a^{2}t+s_{0}$.
Since $\xi=\frac{\,\nabla \varphi }{a}$ is the unit inner normal vector field of the boundary $\{\varphi \,=\,s_{0}\}$ and we just know that the normal exponential map is a diffeomorphism, $\exp^{\bot}(t\xi_{p})$ is a point having distance from $\{\varphi = s_{0}\}$ equal to $t\,$, and 
therefore $$\varphi\big(\exp^{\bot}(t\xi_{p})\,\big)=\varphi \circ \gamma_{p}\Big(\,\frac{t}{a}\,\Big)=at+s_{0}=a\, \rho\big(\exp^{\bot}(t\xi_{p})\big)+s_{0}\,.$$
This tell us that $\varphi$ is an affine function of $\rho$ on $\{\varphi\geq s_{0}\}$. %(for the last part see also~\cite{Sakai}).
\end{proof}

%\noindent\textcolor{violet}{In the above theorem we have shown a result of outer rigidity, now we present a condition on the functions $\Phi_{\beta}$ that guarantees the global rigidity.}

\noindent While the previous proposition contains an outer rigidity result, 
with the following corollary we provide a global rigidity result.
\begin{corollary}\label{corConforme Monotonicity theorem}
Let $(M,g_{0},u)$ be a sub-static harmonic triple, let $g$ and $\varphi$ be the metric and the function defined in~\eqref{eq50}, and let $\Phi_{\beta}:[0,\infty)\to \R$ be the function defined by formula~\eqref{Phibeta} for every $\beta\geq0$. If $\Phi_{\beta}$ is constant for some $\beta>\frac{n-2}{n-1}$, then $\partial M$ is connected and $(M,g)$ is isometric to $\big([0,+\infty)\times \partial M,d\rho\otimes d\rho+g_{\partial M})$, where $\rho$ is the $g$--distance function to $\partial M$ and $\varphi$ is an affine function of $\rho$.
\end{corollary}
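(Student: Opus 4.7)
The plan is to apply the outer-rigidity machinery of Proposition~\ref{Conforme Monotonicity theorem} globally, exploiting that the constancy of $\Phi_\beta$ gives $\Phi_\beta'(s) = 0$ for every $s>0$. From the nonnegative integral representation~\eqref{Phi'}, the integrand must then vanish $\mu_g$-a.e.\ on $\{\varphi>s\}$ for every $s>0$. Invoking the refined Kato inequality~\eqref{Katoineqharmfunct} together with the nonnegativity of $Q(\nabla\varphi,\nabla\varphi)$ in~\eqref{Q}, this forces $\nabla^2\varphi\equiv 0$ on $\{\varphi>s\}\setminus\mathrm{Crit}(\varphi)$ for every $s>0$. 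Since $\mathrm{Crit}(\varphi)$ has zero $\mu_g$-measure (Remark~\ref{remark2}) and $\nabla^2\varphi$ is continuous, one concludes $\nabla^2\varphi\equiv 0$ on $\mathring{M}$, and hence on $M$ by continuity. In particular $|\nabla\varphi|_g$ is constant on the connected manifold $M$, equal by~($\star$) to $a:=(2\mathcal{C})^{-\frac{1}{n-2}}(n-2)>0$.

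Because $|\nabla\varphi|_g\equiv a>0$, zero is a regular value of $\varphi$, so $\partial M=\{\varphi=0\}$ is a smooth compact hypersurface, and since $\nabla^2\varphi\equiv 0$ it is totally geodesic in $(M,g)$. Specializing the sub-static inequality of~\eqref{sistconf} to $\nabla^2\varphi\equiv 0$ yields
\[
\mathrm{Ric}_g\,\geq\,\tfrac{1}{n-2}\bigl(|\nabla\varphi|_g^2\,g\,-\,d\varphi\otimes d\varphi\bigr)\,\geq\,0,
\]
the last inequality because the right-hand side evaluates to $0$ on $\nabla\varphi$ and to $a^2/(n-2)\geq 0$ on unit tangent directions to the level sets. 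Thus $(M,g)$ is a smooth, connected, complete, noncompact Riemannian manifold with smooth compact totally geodesic boundary and nonnegative Ricci curvature. Applying Kasue's splitting theorem~\cite[Theorem~C]{Kasue} exactly as in Step~2 of the proof of Proposition~\ref{Conforme Monotonicity theorem} provides that $\partial M$ is connected and that the normal exponential map realizes an isometry between $\bigl([0,+\infty)\times\partial M,\,d\rho\otimes d\rho + g_{\partial M}\bigr)$ and $(M,g)$.

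It remains to show that $\varphi$ is an affine function of $\rho$, for which we run the same Cauchy--Schwarz argument as at the end of Step~2 of Proposition~\ref{Conforme Monotonicity theorem}: the integral curves of the unit field $\nabla\varphi/a$ issuing from points of $\partial M$ are defined on $[0,+\infty)$ (by completeness of $g$ and since $|\nabla\varphi|_g>0$), are geodesics (because $\nabla^2\varphi\equiv 0$), and realize the $g$-distance from $\partial M$ to every level $\{\varphi=s\}$, $s>0$. Identifying these curves with the fibres of the normal exponential map yields $\varphi=a\rho$ on $M$. The main obstacle is purely structural: one must promote the pointwise rigidity $\nabla^2\varphi\equiv 0$, which Proposition~\ref{Conforme Monotonicity theorem} produces only on each exterior set $\{\varphi\geq s_0\}$ with $s_0>0$, to an identity valid up to and including $\partial M$, and verify the hypotheses of Kasue's theorem globally on $M$ rather than on a half-space; once these extensions are secured, the splitting and the identification $\varphi=a\rho$ follow from the outer-rigidity framework already in place.
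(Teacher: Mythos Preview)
Your proof is correct and follows essentially the same route as the paper's. The only cosmetic difference is that the paper passes to the limit $s\to 0^+$ in the integral identity via the Monotone Convergence Theorem to obtain $\int_M(\cdots)\,d\mu_g=0$, whereas you argue directly that the integrand vanishes a.e.\ on each $\{\varphi>s\}$ and hence on their union $\mathring{M}$; both lead immediately to $\nabla^2\varphi\equiv 0$ on $M$, after which the paper simply invokes ``the very same argument'' of the outer-rigidity step, which you have spelled out in slightly more detail.
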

\begin{proof}
We obtain immediately that $\Phi_{\beta}'(s)=0$ for every $s>0$. Thus, by formula~\eqref{Phi'} with~\eqref{eq65} we have that
$$\int\limits_{\{\varphi>s\}}\Bigg\{\Big(\beta-\frac{n-2}{n-1}\Big)\,\big\vert\nabla\vert \nabla \varphi \vert_{g}\,\big\vert_{g}^{2}+\Bigg[\,\vert \nabla^{2} \varphi \vert_{g}^{2}-\frac{n}{n-1}\Big\vert\nabla\vert \nabla \varphi \vert_{g}\,\Big\vert_{g}^{2}\,\Bigg]+Q(\nabla \varphi ,\nabla \varphi )\Bigg\}\,d\mu_{g}= 0$$
for every $s>0$. In turn, by the Monotone Convergence Theorem, we get 
$$\int\limits_{M}\Bigg\{\Big(\beta-\frac{n-2}{n-1}\Big)\,\big\vert\nabla\vert \nabla \varphi \vert_{g}\,\big\vert_{g}^{2}+\Bigg[\,\vert \nabla^{2} \varphi \vert_{g}^{2}-\frac{n}{n-1}\Big\vert\nabla\vert \nabla \varphi \vert_{g}\,\Big\vert_{g}^{2}\,\Bigg]+Q(\nabla \varphi ,\nabla \varphi )\Bigg\}\,d\mu_{g}= 0\,.$$
Then, we deduce that
$$\Big(\beta-\frac{n-2}{n-1}\Big)\,\Big\vert\nabla\vert \nabla \varphi \vert_{g}\,\Big\vert_{g}^{2}\equiv0 \quad\text{and} \quad \vert \nabla^{2} \varphi \vert_{g}^{2}-\frac{n}{n-1}\Big\vert\nabla\vert \nabla \varphi \vert_{g}\,\Big\vert_{g}^{2}\equiv0\quad \text{in}\, M\setminus\mathrm{Crit}(\varphi)\,,$$
due to Kato Inequality for harmonic functions~\eqref{Katoineqharmfunct} and by~\eqref{Q}.
Consequently $\nabla^{2} \varphi\equiv 0$ in $M$. The very same argument of the proof of Outer Rigidity in Proposition~\ref{Conforme Monotonicity theorem} implies that
$\partial M$ is connected and $(M,g)$ is isometric to 
$$\big([0,+\infty)\times \partial M,d\rho\otimes d\rho+g_{\partial M})$$
where $\rho$ is the $g$--distance to $\partial M$ and $\varphi$ is an affine function of $\rho$.
\end{proof}

\section{A Black--Hole uniqueness theorem for sub-static manifolds}
\label{sec_uniqueness}
This section is devoted to the proof of the Black--Hole uniqueness result for a sub--static harmonic triple, Theorem~\ref{uniq}.
We first recall the classical definition of ADM mass, together with an alternative characterization of it.
\medskip

\noindent
Let $(N,h)$ and $\psi$ be an asymptotically flat manifold with one end and a chart at infinity of $N$, respectively. We consider $\widetilde{h}:=\psi_{*}h=\widetilde{h}_{ij}dx^{i}\otimes dx^{j}$ and we set
\begin{align}
m(r)&=\frac{1}{2(n-1)\vert \SSS^{n-1}\vert}\int\limits_{\partial B_{r}}(\partial_{j}\widetilde{h}_{ij}-\partial_{i}\widetilde{h}_{jj})\nu_{e}^{i}d\sigma_{e}\,,\\
m_{I}(r)&=-\frac{1}{(n-2)(n-1)\vert \SSS^{n-1}\vert}\int\limits_{\partial B_{r}}\Big(\mathrm{Ric}_{\widetilde{h}}-\frac{1}{2}\mathrm{R}_{\widetilde{h}}\,\widetilde{h}\Big)(X,\nu_{\widetilde{h}})\,d\sigma_{\widetilde{h}}\,,
\end{align}
where 
$\nu_{e}$ and $\sigma_{e}$ are the $\infty$--pointing unit normal and the canonical measure on $\partial B_{r}$ as Riemannian submanifold of $(\R^{n}\setminus \overline{B},g_{\R^{n}})$, respectively,
and $\nu_{\widetilde{h}}$ and $\sigma_{\widetilde{h}}$ are the $\infty$--pointing unit normal and the canonical 
measure on $\partial B_{r}$ as Riemannian submanifold of $(\R^{n}\setminus \overline{B},\widetilde{h})$, respectively. Also,
$\mathrm{Ric}_{\widetilde{h}}$ and $\mathrm{R}_{\widetilde{h}}$ are the Ricci tensor and the scalar curvature of $\widetilde{h}$ respectively, and $X$ is the Euclidean conformal Killing vector field $x^{i}\,\frac{\partial }{\partial x^{i}}$.
The ADM mass is well defined as
\begin{equation}
m_{\mathrm{ADM}}:=\lim_{r\to +\infty}\, m(r)\,,
\end{equation}
and independent of the chosen chart at infinity. Moreover (see~\cite{miaotam}), it can be equivalently expressed as
\begin{equation}\label{ADMmasswithricc}
m_{\mathrm{ADM}}=\lim_{r\to +\infty}\, m_{I}(r)\,.
\end{equation}
\medskip

\noindent
From the alternative definition of ADM mass, given by~\eqref{ADMmasswithricc}, and using the Positive Mass Theorem, more precisely a consequence of it contained in~\cite[Theorem 1.5]{hirsch}, one can prove the following uniqueness statement. 
For the notation and terminology, we refer the reader to Definition~\ref{asymptoticallyflatmanifold} and Section~\ref{prel}.
\medskip

\noindent
\begin{proof}[\underline{Proof of Theorem~\ref{uniq}}.]
By condition~\eqref{eq58} and by the fact that $\mathrm{D}_{g_{0}}^{2}u\equiv 0$ on $\partial M$, which in turn implies $\mathrm{H}_{\partial M}^{g_{0}}\equiv 0$, we have that the hypothesis of~\cite[Theorem 1.5]{hirsch} are fulfilled, so that
\begin{equation}\label{eq36}
m_{\mathrm{ADM}}\geq \mathcal{C}.
\end{equation}
Now, we want to show that the reverse inequality holds. Let $\psi$ be a chart at infinity of $M$ (according to Definition~\ref{asymptoticallyflatmanifold}) and consider $\widetilde{g}_{0}=\psi_{*}{g_{0}}$.
Recalling that $\widetilde{u}$ stands for $u\circ\psi^{-1}$, we rewrite characterization~\eqref{ADMmasswithricc} as 
\begin{align*}
m_{\mathrm{ADM}}&=\lim_{r\to +\infty}\Bigg\{-\frac{1}{(n-2)(n-1)\vert \SSS^{n-1}\vert}\int\limits_{\partial B_{r}}\Big(\mathrm{Ric}_{\widetilde{g}_{0}}-\frac{\mathrm{D}_{\widetilde{g}_{0}}^{2}\widetilde{u}}{\widetilde{u}} \Big)(X,\nu_{e})\,d\sigma_{\widetilde{g}_{0}}\\
&-\frac{1}{(n-2)(n-1)\vert \SSS^{n-1}\vert}\int\limits_{\partial B_{r}}\Big(\mathrm{Ric}_{\widetilde{g}_{0}}-\frac{\mathrm{D}_{\widetilde{g}_{0}}^{2}\widetilde{u}}{\widetilde{u}} \Big)(X,\nu_{\widetilde{g}_{0}}-\nu_{e})\,d\sigma_{\widetilde{g}_{0}}\\
&- \frac{1}{(n-2)(n-1)\vert \SSS^{n-1}\vert}\int\limits_{\partial B_{r}} \frac{\mathrm{D}_{\widetilde{g}_{0}}^{2}\widetilde{u}}{\widetilde{u}}(X,\nu_{\widetilde{g}_{0}})\,d\sigma_{\widetilde{g}_{0}}+\frac{1}{2(n-2)(n-1)\vert \SSS^{n-1}\vert}\int\limits_{\partial B_{r}}\mathrm{R}_{\widetilde{g}_{0}}\,
\widetilde g_0(X,\nu_{\widetilde{g}_{0}})\,d\sigma_{\widetilde{g}_{0}}\Bigg\}\,.
\end{align*}
We note first that since $\nu_{e}=\frac{x^{i}}{\vert x\vert}\,\frac{\partial }{\partial x^{i}}=\frac{1}{\vert x\vert}\, X$ and $\widetilde{u}\,\mathrm{Ric}_{\widetilde{g}_{0}}-\mathrm{D}_{\widetilde{g}_{0}}^{2}\widetilde{u} \geq 0$ from the first equation in
~\eqref{f0}, we have
\begin{align}\label{eq44}
\int\limits_{\partial B_{r}}\Big(\mathrm{Ric}_{\widetilde{g}_{0}}-\frac{\mathrm{D}_{\widetilde{g}_{0}}^{2}\widetilde{u}}{\widetilde{u}} \Big)(X,\nu_{e})\,d\sigma_{\widetilde{g}_{0}}=\frac{1}{r}\int\limits_{\partial B_{r}}\Big(\mathrm{Ric}_{\widetilde{g}_{0}}-\frac{\mathrm{D}_{\widetilde{g}_{0}}^{2}\widetilde{u}}{\widetilde{u}} \Big)(X,X)\,d\sigma_{\widetilde{g}_{0}}\geq0\,.
\end{align}
Secondly, recalling that $(\mathrm{D}_{\widetilde{g}_{0}}^{2}\widetilde{u})_{ij}=\partial_{i}\partial_{j}\widetilde{u}-\Gamma_{ij}^{k}\partial_{k}\widetilde{u}$, where $\Gamma_{ij}^{k}$ are the the Christoffel symbols related to $\widetilde{g}_{0}$, and using~\eqref{eq6},~\eqref{simbchristoffel}, and the asymptotic expansions of $\widetilde{u}$, we get 
\begin{align}
\vert (\mathrm{D}_{\widetilde{g}_{0}}^{2}\widetilde{u})_{ij}-(\mathrm{D}_{e}^{2}\widetilde{u})_{ij}\vert&=\vert \Gamma_{ij}^{k}\partial_{k}\widetilde{u}\vert=O\big(\vert x\vert^{-(n+p)}\big)\label{eq46}\\
(\mathrm{D}_{\widetilde{g}_{0}}^{2}\widetilde{u})_{ij}&=O(\vert x\vert^{-n})\label{eq45}\,.
\end{align}
Decay~\eqref{eq45} coupled with~\eqref{eq39app}~\eqref{eq42app} and~\eqref{eq41app}
yields
\begin{align}
\Big\vert \int\limits_{\partial B_{r}}\Big(\mathrm{Ric}_{\widetilde{g}_{0}}-\frac{\mathrm{D}_{\widetilde{g}_{0}}^{2}\widetilde{u}}{\widetilde{u}}\,\Big)(X,\nu_{\widetilde{g}_{0}}-\nu_{e})\,d\sigma_{\widetilde{g}_{0}}\Big\vert \leq C \int\limits_{\partial B_{r}}\frac{1}{\vert x\vert ^{p+\min\{p+2,n\}-1}}\,d\sigma_{e}=\frac{C}{r^{p+\min\{p+2,n\}-n}}\xrightarrow {}0 \,,\label{eq70}
\end{align}
being $p>\frac{n-2}{2}$. Thirdly, we observe that
\begin{align}\label{eq47}
\int\limits_{\partial B_{r}} \frac{\mathrm{D}_{\widetilde{g}_{0}}^{2}\widetilde{u}}{\widetilde{u}}(X,\nu_{\widetilde{g}_{0}})\,d\sigma_{\widetilde{g}_{0}}\xrightarrow[r\to +\infty]{}-(n-1)(n-2)\,\mathcal{C}\vert \SSS^{n-1}\vert\,.
\end{align}
Indeed
\begin{align*}
\int\limits_{\partial B_{r}} \frac{\mathrm{D}_{\widetilde{g}_{0}}^{2}\widetilde{u}}{\widetilde{u}}(X,\nu_{\widetilde{g}_{0}})\,d\sigma_{\widetilde{g}_{0}}&=\int\limits_{\partial B_{r}} \frac{\mathrm{D}_{\widetilde{g}_{0}}^{2}\widetilde{u}}{\widetilde{u}}(X,\nu_{\widetilde{g}_{0}}-\nu_{e})\,d\sigma_{\widetilde{g}_{0}}+\int\limits_{\partial B_{r}} \frac{\mathrm{D}_{\widetilde{g}_{0}}^{2}\widetilde{u}-\mathrm{D}_{e}^{2}\widetilde{u}}{\widetilde{u}}(X,\nu_{e})\,d\sigma_{\widetilde{g}_{0}}\\
&+\int\limits_{\partial B_{r}} \frac{\mathrm{D}_{e}^{2}\widetilde{u}}{\widetilde{u}}(X,\nu_{e})\,d\sigma_{\widetilde{g}_{0}}\,,
\end{align*}
and one can show, with similar estimates as before, that the first two terms of this sum tend to $0$ for $r\to+\infty$.
It is also easy to see, using~\eqref{eq13} and~\eqref{eq41app}, that
$$\int\limits_{\partial B_{r}} \frac{\mathrm{D}_{e}^{2}\widetilde{u}}{\widetilde{u}}(X,\nu_{e})\,d\sigma_{\widetilde{g}_{0}}\xrightarrow[r\to +\infty]{}-(n-1)(n-2)\,\mathcal{C}\vert \SSS^{n-1}\vert\,.$$
Hence,~\eqref{eq47} is proven.
Gathering~\eqref{eq44} ,~\eqref{eq70}, and~\eqref{eq47}, we have finally obtained 
\begin{align}
m_{\mathrm{ADM}}&\leq\mathcal{C}+\limsup_{r\to +\infty}\frac{1}{2(n-2)(n-1)\vert \SSS^{n-1}\vert}\int\limits_{\partial B_{r}}\mathrm{R}_{\widetilde{g}_{0}}\,g(X,\nu_{\widetilde{g}_{0}})\,d\sigma_{\widetilde{g}_{0}}\,.\label{eq49}
\end{align}
We remark that the above inequality is true for any $\psi$ chart at infinity of $M$.
From now on we assume that $\psi$ satisfies condition~\eqref{eq58} regarding the decay rate of $\mathrm{R}_{\widetilde{g}_{0}}$ at $\infty$.
Since
\begin{align*}
\widetilde{g}_{0}(X,\nu_{\widetilde{g}_{0}})&=\widetilde{g}_{0;ij}X^{i}\nu_{\widetilde{g}_{0}}^{j}=\big(\,\delta_{ij}+O(\vert x\vert^{-p})\,\big)X^{i} (\nu_{\widetilde{g}_{0}}^{j}\pm\nu_{e}^{j})\\
&=g_{\R^{n}}(X,\nu_{e})+O(\vert x\vert^{-p+1})=\vert x\vert+O(\vert x\vert^{-p+1})\,,
\end{align*}
also using~\eqref{eq41app} we obtain
\begin{align}
\Big\vert\int\limits_{\partial B_{r}}\mathrm{R}_{\widetilde{g}_{0}}\,\widetilde{g}_{0}(X,\nu_{\widetilde{g}_{0}})\,d\sigma_{\widetilde{g}_{0}}\Big \vert\leq  C \int\limits_{\partial B_{r}}r^{-q}\big(\,r+O(r^{-p+1})\,\big) \,d\sigma_{e}\leq C r^{-q+n}\xrightarrow[r\to +\infty]{} 0 \,.
\end{align}
The fact that $m_{\mathrm{ADM}}\leq \mathcal{C}$ thus follows from~\eqref{eq49}.
All in all, the rigidity case $m_{\mathrm{ADM}}=\mathcal{C}$ of~\cite[Theorem 1.5]{hirsch} holds, which implies that $(M,g_{0})$ is the Schwarzschild manifold.
\end{proof}

\section{Appendix}\label{app}
In this Appendix we provide a proof of Lemma~\ref{firstitegralidenty} which is alternative and more self contained than the corresponding in~\cite{Virginia1}.
We underline that we will use Remark~\ref{remark2} widely.

\begin{proof}[\underline{Proof of Lemma~\ref{firstitegralidenty} (i)}.]
In $\mathring{M}\setminus\mathrm{Crit}(\varphi)$ and for every $\beta\geq 0$, we consider the smooth vector field 
\begin{align}
X_{\beta}:=\frac{\vert \nabla \varphi \vert^{\beta}_{g}\, \nabla \varphi}{\sinh \varphi}\,,
\end{align}
which is such that
\begin{align}\label{divX}
\mathrm{div}_{g}\,X_{\beta}&=\frac{\vert \nabla \varphi \vert^{\beta-2}_{g}\,\Big[\,\beta\,\nabla ^{2}\varphi(\nabla \varphi,\nabla \varphi)\,-\coth(\varphi)\vert \nabla \varphi \vert^{4}_{g} \Big]}{\sinh\varphi}\,
\end{align}
If $\{s\leq \varphi\leq S\}\cap \mathrm{Crit}(\varphi)=\emptyset$, then the statement is a straightforward application of the Divergence Theorem.
Now, suppose that $\{s\leq \varphi\leq S\}\cap \mathrm{Crit}(\varphi) \neq\emptyset$.
Since there always exists $\overline{s}\in (s,S)$ regular value of $\varphi$, up to splitting the right--hand side of~\eqref{partialfirstintegralidentities1} into two subintegrals, we can suppose without loss of generality that one among $s$ and $S$  is a regular value of $\varphi$. To fix the ideas, suppose that $S$ is the regular value.
We are going to change the function $\varphi$ in a neighbourhood of the set $\mathrm{Crit}(\varphi)$. 
To do this, for every $\varepsilon>0$ sufficiently small, applying Sard's Theorem to the smooth function $\varphi$, we can fix a positive real number $\delta(\varepsilon)$ such that $s+\delta(\varepsilon)<S$ is a regular value of $\varphi$ and $\delta(\varepsilon)<d\,\varepsilon$, where $d>0$ will be specified later.
Then, considering a smooth nonincreasing cut--off function $\zeta_{\varepsilon}:[0,+\infty)\to [0,1]$ satisfying the conditions
\begin{align}
\zeta_{\varepsilon}(\tau)=1\text{\quad in $\Big[\,0,\frac{1}{2}\varepsilon\,\Big]$}\,,\quad \vert \zeta_{\varepsilon}'(\tau)\vert \leq \frac c\varepsilon\text{\quad in $\Big[\,\frac{1}{2}\varepsilon,\frac{3}{2}\varepsilon\,\Big]$}\,,\quad \zeta_{\varepsilon}(\tau)=0
\text{\quad in $\Big[\,\frac{3}{2}\varepsilon,+\infty\,\Big)$}\,,\,\label{defcutoffzeta}
\end{align}
where $c$ is a positive real constant independent of $\varepsilon$, we define 
$$\varphi_{\varepsilon}:=\varphi-\zeta_{\varepsilon}(\vert \nabla \varphi\vert^{2}_{g})\,\delta(\varepsilon)\,.$$
Clearly,
\begin{align}\label{gradvareps}
\nabla \varphi_{\varepsilon}=\nabla \varphi-\delta(\varepsilon)\, \zeta'_{\varepsilon}\big( \,\vert \nabla \varphi \vert^{2}_{g}\,\big)\,\nabla\vert \nabla \varphi \vert^{2}_{g},
\end{align}
and
\begin{align}\label{remarkvarphivarepsilon}
\varphi=\varphi_{\varepsilon}\,\,\text{in}\,\, \Big\{\vert \nabla \varphi \vert^{2}_{g}\geq\frac{3}{2}\varepsilon\,\Big\}\,.
\end{align}
\noindent 
Note
that $s$ is a regular value for the function $\varphi_{\varepsilon}$.
To see this, let
$p$ be a point of $\{\varphi_{\varepsilon}=s\}$ and distinguish the two cases 
$$\vert \nabla \varphi \vert^{2}_{g}\,(p)\leq\frac{1}{2}\varepsilon\,;\quad \quad\quad \vert \nabla \varphi \vert^{2}_{g}\,(p)\overset{(\bigstar)}{>}\frac{1}{2}\varepsilon.$$
In the first case, $\zeta_{\varepsilon}(\vert \nabla \varphi\vert^{2}_{g})\equiv 1$ so that $s=\varphi_{\varepsilon}(p)= \varphi(p)-\delta(\varepsilon)$ and $\nabla \varphi_{\varepsilon}(p)=\nabla \varphi(p)$.
Since $s+\delta(\varepsilon)$ is a regular value for $\varphi$, $\nabla \varphi_{\varepsilon}(p)\neq 0$.
In the second case, observing that $s\leq\varphi(p)\leq s+\delta(\varepsilon)$ and therefore $p\in\{s\leq \varphi\leq S\}$, 
we have from~\eqref{gradvareps} that in $p$
\begin{align}
\vert \nabla \varphi_{\varepsilon}\vert_{g}&\geq \vert \nabla \varphi \vert_{g}-\delta(\varepsilon)\,\vert\zeta'_{\varepsilon}\vert\big( \,\vert \nabla \varphi \vert^{2}_{g}\,\big)\big\vert \nabla\vert \nabla \varphi \vert^{2}_{g}\big\vert_{g}
=\vert \nabla \varphi \vert_{g}\Big(1-2\delta(\varepsilon)\,\vert\zeta'_{\varepsilon}\vert\big( \,\vert \nabla \varphi \vert^{2}_{g}\,\big)\big\vert \nabla\vert \nabla \varphi \vert_{g}\big\vert_{g}\Big)\\
&\geq\vert \nabla \varphi \vert_{g}\Big(1-2d\,\varepsilon\, \frac{c}{\varepsilon}\, \max\limits_{\{s\leq \varphi\leq S\}}\big\vert\nabla\vert \nabla \varphi \vert_{g}\,\big\vert_{g}\,\Big)\,,
\end{align}
where $c$ is the constant appearing in~\eqref{defcutoffzeta}. Now, observe that $\max\limits_{\{s\leq \varphi\leq S\}}\big\vert\nabla\vert \nabla \varphi \vert_{g}\,\big\vert_{g}\,>0$, since otherwise, due to the presence of critical points in $\{s\leq \varphi\leq S\}$, there should be a connected component of $\{s\leq \varphi\leq S\}$ where $\nabla \varphi\equiv0$. But this is impossible because $\{s\leq \varphi\leq S\}=\overline{\{s<\varphi<S\}}$ (by Remark~\ref{disposizionelevelset}) and by the size of $\mathrm{Crit}(\varphi)$. Hence, choosing 
$$d\leq \frac{1}{4\,c\,\max\limits_{\{s\leq \varphi\leq S\}}\big\vert\nabla\vert \nabla \varphi \vert_{g}\,\big\vert_{g }}\,,$$ 
from above we obtain $\vert\nabla \varphi_{\varepsilon}\vert_{g}(p)\geq \frac{\vert\nabla \varphi\vert_{g}}{2}(p)$. In particular, from {\footnotesize $(\bigstar)$} we get that $\vert\nabla \varphi_{\varepsilon}\vert_{g}(p)>\frac{\varepsilon}{4}$.

\noindent Now, we apply 
the Divergence Theorem to the smooth vector field $\Xi_{4\varepsilon} X_{\beta}$ on $\{s<\varphi_{\varepsilon}<S\}$, where 
\[\Xi_{\varepsilon}\,:=\,1- \zeta_{\varepsilon}(\vert \nabla \varphi\vert^{2}_{g}).\]
Recalling that $U_{\mu}$ is defined as in~\eqref{eq62}, we obtain
\begin{align*}
\int\limits_{\{\varphi_{\varepsilon}=S\}}\,g\Big(\,\Xi_{4\varepsilon} X_{\beta},\frac{\nabla \varphi_{\varepsilon}}{\vert \nabla \varphi_{\varepsilon}\vert_{g}}\,\Big)d\sigma_{g}&-
\int\limits_{\{\varphi_{\varepsilon}=s\}}\,g\Big(\,\Xi_{4\varepsilon} X_{\beta},\frac{\nabla \varphi_{\varepsilon}}{\vert \nabla \varphi_{\varepsilon}\vert_{g}}\,\Big)\,d\sigma_{g}\\
&=\int\limits_{\{s<\varphi_{\varepsilon}<S\}}\Xi_{4\varepsilon}\frac{\vert \nabla \varphi \vert^{\beta-2}_{g}\,\Big[\,\beta\,\nabla ^{2}\varphi(\nabla \varphi,\nabla \varphi)\,-\coth(\varphi)\vert \nabla \varphi \vert^{4}_{g} \Big]}{\sinh\varphi}\,d\mu_{g}\\
&+2\int\limits_{ \big(U_{6\varepsilon} \setminus\overline{U_{2\varepsilon}}\big)\cap \{s<\varphi_{\varepsilon}<S\}}\,\frac{\,\chi'_{4\varepsilon}\big(\,\vert \nabla \varphi \vert^{2}_{g}\,\big)\,\vert \nabla \varphi \vert^{\beta}_{g}\,\nabla ^{2}\varphi(\nabla \varphi,\nabla \varphi)\,}{\sinh \varphi}\,d\mu_{g}\,.
\end{align*}
Note that $\{\varphi=S\}$ is compactly contained in $\{\vert \nabla \varphi\vert^{2}_{g}>\frac{3}{2}\,\varepsilon\}$ for every $\varepsilon$ sufficiently small, and $\Xi_{4\varepsilon}\equiv0$ in $\{\vert \nabla \varphi\vert^{2}_{g}\leq2\,\varepsilon\}\supset \{\vert \nabla \varphi\vert^{2}_{g}\leq\frac{3}{2}\,\varepsilon\}$. Then, by~\eqref{remarkvarphivarepsilon} we get
\begin{align}
\int\limits_{\{\varphi=S\}}\,\frac{\vert \nabla \varphi \vert^{\beta+1}_{g}}{\sinh\varphi}\,d\sigma_{g}&-
\int\limits_{\big\{\,\varphi=s,\vert \nabla \varphi \vert^{2}_{g}\geq\frac{3}{2}\varepsilon\,\big\}}\,\Xi_{4\varepsilon}\,\frac{\vert \nabla \varphi \vert^{\beta+1}_{g}}{\sinh\varphi}\,d\sigma_{g}\\
&=\int\limits_{\{s<\varphi<S\}}\Xi_{4\varepsilon}\,\frac{\vert \nabla \varphi \vert^{\beta-2}_{g}\,\Big[\,\beta\,\nabla ^{2}\varphi(\nabla \varphi,\nabla \varphi)\,-\coth(\varphi)\vert \nabla \varphi \vert^{4}_{g} \Big]}{\sinh\varphi}\,d\mu_{g}\\
&+2\int\limits_{ \big(U_{6\varepsilon} \setminus\overline{U_{2\varepsilon}}\big)\cap \{s<\varphi<S\}}\,\frac{\,\chi'_{4\varepsilon}\big(\,\vert \nabla \varphi \vert^{2}_{g}\,\big)\,\vert \nabla \varphi \vert^{\beta}_{g}\,\nabla ^{2}\varphi(\nabla \varphi,\nabla \varphi)\,}{\sinh \varphi}\,d\mu_{g}\,.\, \label{eq1}
\end{align}

\noindent
Looking at the left--hand side of~\eqref{eq1}, note that
\begin{align}
\Bigg\vert\int\limits_{ \big(U_{6\varepsilon}\setminus\overline{U_{2\varepsilon}}\big)\cap \{s<\varphi<S\}}\chi'_{4\varepsilon}\big(\,\vert \nabla \varphi \vert^{2}_{g}\,\big) \vert \nabla \varphi \vert^{\beta}_{g} \nabla ^{2}\varphi(\nabla \varphi,\nabla \varphi)\,d\mu_{g}\Bigg\vert
&\leq\frac{c}{4\,\varepsilon}\int\limits_{ U_{6\varepsilon}} \vert \nabla \varphi \vert^{\beta+2}_{g}\vert \nabla ^{2}\varphi\vert_{g}\,d\mu_{g}\\
&\leq C\frac{\varepsilon^{\frac{\beta}{2}+1}}{\varepsilon}\mu_{g}(U_{6\varepsilon} )\to 0\,,
\end{align}
\noindent where in the second inequality we have used Lemma~\ref{lemmabound} and the fact that $U_{\varepsilon}$ is contained in a compact set for every $\varepsilon<<1$ (which is a consequence of~\eqref{eq60}).
Moreover, by the Dominated Convergence Theorem, we have that
\begin{align}
\lim_{\varepsilon\to 0^+}\int\limits_{\{s<\varphi<S\}}\Xi_{4\varepsilon}\,&\frac{\vert \nabla \varphi \vert^{\beta-2}_{g}\,\Big[\,\beta\,\nabla ^{2}\varphi(\nabla \varphi,\nabla \varphi)\,-\coth(\varphi)\vert \nabla \varphi \vert^{4}_{g} \Big]}{\sinh\varphi}\,d\mu_{g}\\
&=\int\limits_{\{s<\varphi<S\}}\,\frac{\vert \nabla \varphi \vert^{\beta-2}_{g}\,\Big[\,\beta\,\nabla ^{2}\varphi(\nabla \varphi,\nabla \varphi)\,-\coth(\varphi)\vert \nabla \varphi \vert^{4}_{g} \Big]}{\sinh\varphi}\,d\mu_{g}\,.
\end{align}
Finally, note that $\{\zeta_{\varepsilon}\}$ can always be
chosen to be
nonincreasing in $\varepsilon$ so that, in turn, $\{\Xi_{\varepsilon}\}$ in nondecreasing. Therefore, looking at the left--hand side of~\eqref{eq1}, we have that
\begin{align}
\lim_{\varepsilon\to 0^+}\int\limits_{\big\{\,\varphi=s,\vert \nabla \varphi \vert^{2}_{g}\geq\frac{3}{2}\varepsilon\,\big\}}\,\Xi_{4\varepsilon}\,\frac{\vert \nabla \varphi \vert^{\beta+1}_{g}}{\sinh\varphi}\,d\sigma_{g}&=
\lim_{\varepsilon\to 0^+}\int\limits_{\{\,\varphi=s\}}\,\Xi_{4\varepsilon}\,\frac{\vert \nabla \varphi \vert^{\beta+1}_{g}}{\sinh\varphi}\,d\sigma_{g}\\
&=\int\limits_{\{\,\varphi=s \}}\,\frac{\vert \nabla \varphi \vert^{\beta+1}_{g}}{\sinh\varphi}\,d\sigma_{g}\,,
\end{align}
by the Monotone Convergence Theorem.
All in all, passing to the limit as $\varepsilon\to 0^+$ in~\eqref{eq1}, yields the desired identity.
\end{proof}

\begin{proof}[\underline{Proof of  Lemma~\ref{firstitegralidenty} (ii)}.]
Lemma~\ref{lemmabound} implies
$$\lim_{S\to+\infty}\int\limits_{\{\varphi=S\}}\frac{\vert \nabla \varphi \vert^{\beta+1}_{g}}{\sinh\varphi}\,d\sigma_{g}=0\,.$$
Note that
\begin{align}\label{eq2}
\frac{\vert \nabla \varphi \vert^{\beta-2}_{g}\,\Big[\,\coth(\varphi)\vert \nabla \varphi \vert^{4}_{g}-\beta\,\nabla ^{2}\varphi(\nabla \varphi,\nabla \varphi)\, \Big]}{\sinh\varphi}\in L^{1}\Big(\{\varphi\geq s\};\mu_{g}\Big)
\end{align}
because its absolute value belongs to $L^{1}_{\text{loc}}\big(\{\varphi\geq s\},\mu_{g}\big)$ immediately and to $L^{1}\big(\{\varphi\geq S\},\mu_{g}\big)$ for $S$ sufficiently big, applying the Coarea Formula coupled with~\eqref{eq60} and  Lemma~\ref{boundutili}.
Therefore, passing to the limit as $S\to +\infty$ in~\eqref{partialfirstintegralidentities1}
and using the Dominated Convergence Theorem gives the desired identity.\\
\end{proof}

\begin{proof}[\underline{Proof of Lemma~\ref{firstitegralidenty} (iii)}.]
Let $\beta\geq0$. We are assuming that the boundary $\partial M$ is a regular 
level set of $\varphi$ so that there exists $\epsilon>0$ such that 
$[0,\epsilon]\cap\mathrm{Crit}(\varphi)=\emptyset$. 
Therefore, applying the Divergence Theorem to the smooth vector field 
$\vert \nabla \varphi \vert^{\beta}_{g}\, \nabla \varphi$ in $\{0< \varphi<\epsilon\}$ 
yields
$$\Phi_{\beta}(\epsilon)-\Phi_{\beta}(0)=\int\limits_{\{0<\varphi<\epsilon\}}\,\beta\,\vert \nabla \varphi \vert^{\beta-2}_{g}\,\nabla ^{2}\varphi(\nabla \varphi,\nabla \varphi)\,d\mu_{g}\,.$$
In turn, the absolute continuity of the integral
implies the continuity of $\Phi_{\beta}$ at $0$.
By point
$(i)$ and again by the absolute continuity of the integral, we obtain
the right and the left continuity of the function 
\begin{equation}\label{Upsilonbeta}
\Upsilon_{\beta}:s\in(0,+\infty)\to\frac{\Phi_{\beta}(s)}{\sinh s}\in\R\,.
\end{equation}
Hence, $\Phi_{\beta}$ is continuous also in $(0,+\infty)$.
The integral representation of $\Phi_{\beta}$ follows directly from point $(ii)$. 
\end{proof}
\bigskip

\noindent
\textbf{\large{Acknowledgements.}} {\em 
The authors are members of the Gruppo Nazionale per l'Analisi Matematica, 
la Probabilit\`a
e le loro Applicazioni (GNAMPA), which is part of the Istituto Nazionale di 
Alta Matematica (INdAM), 
and they are partially funded by the GNAMPA project ``Aspetti geometrici in teoria del potenziale lineare e nonlineare''.
F. O. thanks L.~Benatti and C.~Mantegazza for 
useful discussions during the preparation of the manuscript.}

\bibliographystyle{amsplain}
%\bibliography{biblio}
\providecommand{\bysame}{\leavevmode\hbox to3em{\hrulefill}\thinspace}
\providecommand{\MR}{\relax\ifhmode\unskip\space\fi MR }
% \MRhref is called by the amsart/book/proc definition of \MR.
\providecommand{\MRhref}[2]{%
  \href{http://www.ams.org/mathscinet-getitem?mr=#1}{#2}
}
\providecommand{\href}[2]{#2}

\end{document}